\newcommand{\redcite}[1]
    \patchcmd{\@author}{\global\let\@fnmark\@empty}{\global\let\@fnmark\@empty\global\let\@corref\@empty}{}{\@latex@error{Failed to patch \string\@author for \string\@corref reset}}
\newcommand{\grd}{{\sc Global Roman Domination}}
\newcommand{\ird}{{\sc Independent Roman Domination}}
\newcommand{\rthd}{{\sc Roman \{3\}-Domination}}
\newcommand{\rd}{{\sc Roman Domination}}
\newcommand{\resd}{{\sc Restrained Roman Domination}}
\newcommand{\urrd}{{\sc Unique Response Roman Domination}}
\newcommand{\prd}{{\sc Perfect Roman Domination}}
\newcommand{\ds}{{\sc Dominating Set}}
\newcommand{\xthc}{{\sc Exact 3-Cover}}
\newcommand{\xfc}{{\sc Exact 4-Cover}}
\newcommand{\xlc}{{\sc Exact $\ell$-Cover}}
\newcommand{\trtd}{{\sc Total Roman \{2\}-Domination}}
\definecolor{mynicegreen}{RGB}{70,150,70}
\newtheorem{theorem}{Theorem}[section]
\newtheorem{lemma}{Lemma}
{\bfseries}{\itshape}
{\bfseries}{\itshape}{\rmfamily}
\newtheorem{definition}{Definition}{\bfseries}{\itshape}{\rmfamily}
\journal{Theoretical Computer Science}
\begin{document}

\begin{frontmatter}



\title{On the complexity of global Roman domination problem in graphs}


\affiliation[label1]{organization={University of Hyderabad},
            city={Hyderabad},
            country={India}}
\affiliation[label2]{organization={Indian Institute of Technology Bhilai},
            city={Raipur},
            country={India}}
            
\author[label1]{Sangam Balchandar Reddy} \ead{21mcpc14@uohyd.ac.in}
\author[label1]{Arun Kumar Das} \ead{arunkumardas@uohyd.ac.in}
\author[label1]{Anjeneya Swami Kare} \ead{askcs@uohyd.ac.in}
\author[label2]{I. Vinod Reddy} \ead{vinod@iitbhilai.ac.in}

\begin{abstract}
A Roman dominating function of a graph $G=(V,E)$ is a labeling $f: V \rightarrow{} \{0 ,1, 2\}$  such that for each vertex $u \in V$ with $f(u) = 0$, there exists a vertex $v \in N(u)$ with $f(v) =2$. A Roman dominating function $f$ is a global Roman dominating function if it is a Roman dominating function for both $G$ and its complement $\overline{G}$. The weight of $f$ is the sum of $f(u)$ over all the vertices $u \in V$. The objective of \grd{} problem is to find a global Roman dominating function with minimum weight. 

In this paper, we study the algorithmic aspects of \grd{} problem on various graph classes and obtain the following results. 

\begin{enumerate}
    \item We prove that \rd{} and \grd{} problems are not computationally equivalent by identifying graph classes on which one is linear-time solvable, while the other is NP-complete.
    \item We show that \grd{} problem is NP-complete on split graphs, thereby resolving an open question posed by Panda and Goyal [Discrete Applied Mathematics, 2023].
    \item We prove that \grd{} problem is NP-complete on chordal bipartite graphs, planar bipartite graphs with maximum degree five and circle graphs.
    \item On the positive side, we present a linear-time algorithm for \grd{} problem on cographs.
\end{enumerate}
\end{abstract}



\begin{keyword}
Roman domination \sep global Roman domination \sep complexity \sep NP-complete \sep split graphs \sep cographs



\end{keyword}

\end{frontmatter}



\section{Introduction}\rd{} (RD) is a fascinating concept in graph theory, inspired by the strategic placement of Roman legions to defend the Roman empire efficiently. The concept of RD problem was introduced by Stewart~\cite{stewart1999defend} in 1999. Given a graph $G = (V, E)$, a Roman dominating function $f: V \rightarrow{} \{0 ,1, 2\}$ is a labeling of vertices such that each vertex $u \in V$ with $f(u) = 0$ has a vertex $v \in N(u)$ with $f(v) =2$. The weight of a Roman dominating function $f$ is the sum of $f(u)$ over all the vertices $u \in V$. The minimum weight of a Roman dominating function for $G$ is denoted by $\gamma_{R}(G)$. The objective of RD problem is to compute $\gamma_{R}(G)$.

A Roman dominating function $f$ will be a global Roman dominating function if $f$ is a Roman dominating function for both $G$ and $\overline{G}$. The minimum weight of a global Roman dominating function $f$ is denoted by $\gamma_{gR}(G)$. The objective of \grd{} (GRD) problem is to compute $\gamma_{gR}(G)$.

The decision version of GRD problem is formally defined as follows.
\begin{tcolorbox}
{
\grd{}: \newline
\textbf{Input:} An instance $I$ = $(G, k)$, where $G=(V,E)$ is an undirected graph and an integer $k$.\newline
\textbf{Output:} Yes, if there exists a function $f: V \rightarrow{} \{0,1,2\}$ such that \\
(1) $\sum_{u \in V} f(u) \leq k$, \\
(2) for every vertex $u \in V$ with $f(u) = 0$, there exist vertices $v \in N_G(u)$ and $w \in N_{\overline{G}}(u)$ such that $f(v) = 2$ and $f(w) = 2$; No, otherwise.
}
\end{tcolorbox}
Liu et al.~\cite{liu2013roman} proved that RD problem is NP-complete on split graphs and bipartite graphs. They provided a linear-time algorithm for strongly chordal graphs. RD problem also admits linear-time algorithms for cographs, interval graphs, graphs of bounded clique-width and a polynomial-time algorithm for AT-free graphs~\cite{liedloff2008efficient}. Fernau~\cite{henning} studied RD problem in the realm of parameterized complexity and proved that the problem is W[2]-hard parameterized by weight. They provided an FPT algorithm that runs in time $\mathcal{O}^*(5^t)$, where $t$ is the treewidth of the input graph. In addition, for planar graphs, they showed that RD problem can be solved in $\mathcal{O}^*(3.3723^k)$ time. RD problem was proved to be W[1]-hard parameterized by weight, even for split graphs~\cite{mohanapriya2023roman}. Recently, Ashok et al.~\cite{ashok2024independent} provided an $\mathcal{O}^*(4^d)$ algorithm for the \ird{} (IRD) problem, where $d$ is the cluster vertex deletion set number. 

Several studies have investigated the complexity differences between various variants of \ds{} (DS) problem. Duan et al.~\cite{duan2022independent} proved that RD problem and IRD problem differ in complexity aspects. Chakradhar et al.~\cite{chakradhar2022algorithmic} showed that DS problem and \rthd{} problem are not equivalent in terms of complexity. Mann et al.~\cite{fernau2023perfect} demonstrated a complexity distinction between \urrd{} problem and \prd{} problem. It was proved that DS problem and \trtd{} problem differ in complexity~\cite{chakradhar2022roman2}. Chakradhar~\cite{chakradhar2023complexity} proved that DS problem and \resd{} problem are not equivalent in complexity. Along similar lines, we construct a graph class $\mathcal{F}$ for which RD problem can be solved in linear-time whereas GRD problem is NP-complete. We also construct a graph class $\mathcal{G}$ for which GRD problem can be solved in linear-time, whereas RD problem is NP-complete. Hence, we obtain that RD problem and GRD problem are not computationally equivalent.

Panda et al. \cite{panda} proved that GRD problem is NP-complete on bipartite graphs and chordal graphs. They also provided a polynomial-time algorithm for threshold graphs. In addition, they proposed an $\mathcal{O}(\ln{|V|})$-approximation algorithm and showed that there does not exist an $(1-\epsilon)\ln{|V|}$-approximation algorithm for any $\epsilon > 0$, unless P = NP. Moreover, they also proved that GRD problem is APX-complete for bounded-degree graphs. We advance the complexity study of the problem by proving that GRD problem is NP-complete on split graphs, thereby resolving an open question posed by Panda et al.~\cite{panda}. We also show that GRD problem is NP-complete on chordal bipartite graphs, planar bipartite graphs with maximum degree five and circle graphs. We further present a linear-time algorithm for GRD problem on cographs. As cographs form a superclass of threshold graphs, our result extends the known solution on threshold graphs.
\section{Preliminaries}
Let $G = (V, E)$ be a graph with $V$ as the vertex set and $E$ as the edge set such that $|V| = n$ and $|E| = m$. The sets $N(u) = \{v \in V~|~uv\in E\}$ and $N[u]= N(u) \cup \{u\}$ denote the open neighbourhood and closed neighbourhood of a vertex $u$ in $G$. We use $d(u)$ to denote the degree of a vertex $u$ such that $d(u) = |N(u)|$. A pendant vertex $u$ has degree, $d(u)$ = 1. A graph $G$ is $k$-regular, if $d(u) = k$ for each vertex $u \in V$. Two vertices $u$ and $v$ are \textit{false twins} in $G$ if and only if $N(u) = N(v)$ and $v \notin N(u)$. A \textit{universal vertex} in $G$ is a vertex that is adjacent to every other vertex in $G$. We use $\overline{G}$ to represent the complement graph of $G$. We say that a vertex $u \in V$ is \textit{covered} if either $f(u) \geq 1$ or there exist two vertices $v \in N(u)$ and $w \notin N(u)$ such that $f(v) = f(w) = 2$.  

For a vertex $u \in V$, the value of $f(u)$ is also called the \textbf{label} of $u$. For a set $T$, \textbf{weight} of $T$ is the sum of the labels of all the vertices in the set, i.e., $\sum\limits_{u \in T} f(u)$. We use $f(V)$ to denote the weight of a function $f$ on $G$, i.e., $f(V) = \sum\limits_{u \in V} f(u)$. In addition to this, we use the standard notations as defined in~\cite{WEST}.

\medskip
\noindent We define the \xlc{} problem as follows.
\begin{tcolorbox}
    \xlc: \newline
    \textbf{Input:} A finite set $X$ with $|X| = \ell q$ and a collection $C$ of $\ell$-element subsets of $X$. \newline
\textbf{Output:} Yes, if there exist $q$ pairwise disjoint subsets of $C$ whose union is $X$; No, otherwise.
\end{tcolorbox}
We establish the following result that will be used in Section \ref{sec4} of this paper.
\begin{lemma}~\label{x4cproof}
    \xfc{} problem is NP-complete.
\end{lemma}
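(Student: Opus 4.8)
The plan is to prove NP-completeness of \xfc{} by reduction from \xthc{} (\textsc{Exact 3-Cover}), which is a classical NP-complete problem. Membership in NP is immediate: given a candidate family of $q$ subsets, one verifies in polynomial time that they are pairwise disjoint and that their union is $X$. So the substance lies entirely in the hardness reduction.

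For the reduction, I would start from an instance of \xthc{} consisting of a ground set $X$ with $|X| = 3q$ and a collection $C$ of $3$-element subsets of $X$. The natural idea is to pad each $3$-element set up to a $4$-element set by introducing a pool of fresh ``dummy'' elements, one per set in $C$, so that choosing a set in the \xfc{} instance simultaneously commits to covering its three original elements and one dedicated dummy. Concretely, for each $c_i \in C$ I would add a new element $d_i$ and form the $4$-element set $c_i' = c_i \cup \{d_i\}$. The new ground set is $X' = X \cup \{d_i : c_i \in C\}$, and the new collection is $C' = \{c_i' : c_i \in C\}$. This guarantees $|X'| = 3q + |C|$, and we need $|X'|$ to be a multiple of $4$ with the right target count of chosen sets; the padding must be engineered so that the number of sets selected in a valid exact cover is forced.

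The key step — and the \textbf{main obstacle} — is arranging the arithmetic so that an exact $4$-cover of $X'$ exists \emph{if and only if} an exact $3$-cover of $X$ exists. Because each dummy $d_i$ appears in exactly one set $c_i'$, any exact cover of $X'$ must be able to cover every dummy, but a dummy can only be covered by selecting its own set $c_i'$ — which would force \emph{all} sets to be selected, an over-cover of $X$. This shows the naive single-dummy-per-set padding fails, so I would instead make the dummies \emph{optional} to cover by adding extra filler sets that absorb unselected dummies, or, more cleanly, add a pool of dummy elements together with auxiliary $4$-element sets composed entirely of dummies; then selecting a ``real'' set $c_i'$ covers three genuine elements plus one dummy, and the leftover dummies are mopped up by the all-dummy sets. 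The count of dummies and all-dummy filler sets must be balanced so that exactly $q$ real sets get chosen, precisely mirroring an exact $3$-cover. Verifying both directions of this equivalence, and checking that $|X'| = 4q'$ for the appropriate $q'$, is the delicate bookkeeping that carries the proof.

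Once the gadget arithmetic is pinned down, both directions follow routinely: from a solution of \xthc{} one reads off the $q$ corresponding padded sets plus the filler sets, and conversely from an exact $4$-cover one restricts attention to the real sets and deletes the dummies to recover disjoint $3$-sets covering $X$. The reduction is clearly polynomial, completing the argument.
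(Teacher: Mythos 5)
Your overall strategy (reduce from \xthc{} by padding $3$-sets with dummy elements) matches the paper's, and you correctly diagnose why the naive one-dummy-per-set padding fails. However, your proposed repair is left genuinely incomplete at exactly the point you identify as ``the delicate bookkeeping that carries the proof.'' As described, the construction is underspecified in ways that matter: you introduce one dummy per set in $C$ (so $|C|$ dummies), which means that after choosing $q$ real sets, the $|C|-q$ leftover dummies --- \emph{which} dummies are left over depends on the choice of cover --- must be absorbed by all-dummy filler sets. For this to work you need the filler collection to contain enough $4$-subsets of the dummy pool to partition an arbitrary leftover set, and you need $|C|-q\equiv 0\pmod 4$ (otherwise no partition into $4$-sets exists), which is not automatic and requires further padding with extra dummies; none of this arithmetic is carried out, and both directions of the equivalence hinge on it. So the proposal is a plan with a hole where the proof should be, even though the hole is fillable.

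The paper's construction sidesteps all of this and is worth contrasting with yours. It introduces only $q$ dummy elements $x_{d_1},\dots,x_{d_q}$ (so $|X'|=4q$ automatically, with the same target $q$), and for \emph{every} pair $(C_i, x_{d_j})$ it adds the set $C_i^j=C_i\cup\{x_{d_j}\}$ to $C'$. Because each original set is paired with every dummy, a chosen set can absorb whichever dummy happens to be free, so there are no leftover dummies, no filler sets, and no divisibility conditions: an exact $4$-cover must consist of exactly $q$ sets each containing one distinct dummy and three elements of $X$, and projecting away the dummies yields an exact $3$-cover. If you want to salvage your version, the cleanest fix is to adopt this ``every set gets every dummy'' pairing rather than engineering a balanced filler gadget.
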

\begin{proof}
    For an instance $(X = \{x_1,x_2,...x_{3q}\}, C = \{C_1,C_2,...C_t\})$ of X3C problem, we construct X4C problem instance $(X', C')$ as follows. We introduce an additional $q$ elements $x_{d_1}, x_{d_2},...,x_{d_q}$ in $X$ to obtain $X'$. Let $C_i^j$ be a 4-element set obtained by adding an element $x_{d_j}$ to $C_i$. We have $C'$ = \{$C_{1}^1, C_{1}^2 ,..., C_{1}^q, C_{2}^1,C_{2}^2 ..., C_{2}^q, ..., C_{t}^1,C_{t}^2 ..., C_{t}^q$\}. This concludes the construction of $(X', C')$. \\

\noindent ($\Rightarrow$) Let $C^* \subseteq C$ be an exact 3-cover of $X$ with $|C^*| = q$. We assign to each set $C_i \in C^*$ a unique dummy element $x_{d_j}$. For each such pair $(C_i, x_{d_j})$, we construct the corresponding 4-element set $C_i^j = C_i \cup \{x_{d_j}\} \in C'$.

Let $C'^* = \{C_i^j \mid C_i \in C^*,\ x_{d_j}\ \text{is uniquely assigned to}\ C_i\}$.

\begin{itemize}
    \item Each $C_i^j$ in $C'^*$ covers exactly 3 elements from $X$ and one unique element from $\{x_{d_1}, \dots, x_{d_q}\}$.
    \item The sets in $C'^*$ are disjoint, since the original $C_i$ sets are disjoint and the $x_{d_j}$ are distinct.
    \item The union of all sets in $C'^*$ is $X' = X \cup \{x_{d_1}, \dots, x_{d_q}\}$.
\end{itemize}

Thus, we have that $C'^*$ is an exact 4-cover of $X'$. \\

\noindent ($\Leftarrow$) Let $C'^* \subseteq C'$ be an exact 4-cover of $X'$, with $|C'^*| = q$. Each set in $C'$ is of the form $C_i^j = C_i \cup \{x_{d_j}\}$, so each selected set in $C'^*$ contains exactly one dummy element $x_{d_j}$ and three elements from $X$. There are exactly $q$ dummy elements in $X'$, and $q$ sets in $C'^*$, so each dummy element must appear in exactly one set in $C'^*$.

Let $C^* = \{C_i \mid C_i^j \in C'^*\}$ be the corresponding sets from the X3C problem instance.

\begin{itemize}
    \item $|C^*| = q$.
    \item Each $C_i \in C^*$ covers exactly 3 elements of $X$ and the sets are disjoint.
    \item The union of all sets in $C^*$ is $X$, since $X'$ is fully covered by $C'^*$ and all the dummy elements are uniquely used.
\end{itemize}

Thus, we conclude that $C^*$ is an exact 3-cover of $X$.
\end{proof}
\section{Complexity difference between \rd{} and \grd{}}~\label{sec4}
In this section, we show that the complexity of GRD problem differs from RD problem. We construct two graph classes $\mathcal{F}$ and $\mathcal{G}$ to demonstrate the complexity difference between these two problems. 
\subsection{Graph class: $\mathcal{F}$} In this subsection, we construct a graph class $\mathcal{F}$ for which RD problem can be solved in linear-time, whereas GRD problem is NP-complete. 

\medskip
\noindent \textbf{Construction of $H \in \mathcal{F}$.} Let $G = (V, E)$ be a 3-regular graph. We create three copies of $\overline{G}$ that are $G_1 = (V_1, E_1)$, $G_2 = (V_2, E_2)$ and $G_3 = (V_3, E_3)$. Vertex $u \in V(G)$ is denoted by $u_1$ in $G_1$, $u_2$ in $G_2$ and $u_3$ in $G_3$ respectively. For each vertex $u_1 \in V_1$, we make $u_1$ adjacent to every vertex in $N_{G_2}(u_2)$ and $N_{G_3}(u_3)$. For each vertex $u_2 \in V_2$, we make $u_2$ adjacent to every vertex in $N_{G_3}(u_3)$. We then introduce three vertices $v_1$, $v_2$ and $v_3$ and make each of them adjacent to every vertex in $V_1 \cup V_2 \cup V_3$ to obtain the resultant graph $H \in \mathcal{F}$. Formally $\mathcal{F}:=\{H~|~ \text{$H$ is obtained from a 3-regular graph $G$ by the above } \\ \text{procedure}\}$.
We show that RD problem is linear-time solvable on $\mathcal{F}$ whereas GRD problem on $\mathcal{F}$ is NP-complete.
\begin{lemma}~\label{f1}
    $\gamma_{R}(H) = 4$ for every $H \in \mathcal{F}$.
\end{lemma}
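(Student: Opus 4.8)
The plan is to establish the two bounds $\gamma_R(H)\le 4$ and $\gamma_R(H)\ge 4$ separately, with the lower bound resting entirely on a degree count.

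For the upper bound I would simply exhibit a Roman dominating function of weight $4$. Set $f(v_1)=2$, $f(v_2)=f(v_3)=1$, and $f(u)=0$ for every other vertex. By construction each of $v_1,v_2,v_3$ is adjacent to every vertex of $V_1\cup V_2\cup V_3$, so every $0$-labelled vertex (all of which lie in $V_1\cup V_2\cup V_3$) has $v_1$ as a label-$2$ neighbour; the only vertices not dominated by $v_1$ are $v_2$ and $v_3$, which already carry label $1$. Hence $f$ is a valid RDF and $\gamma_R(H)\le 4$.

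The crux of the lower bound is the structural fact that \emph{every vertex of $H$ is non-adjacent to at least two other vertices}, i.e.\ $\Delta(H)=|V(H)|-3$. Since $G$ is $3$-regular on $n$ vertices, each copy $G_i\cong\overline{G}$ is $(n-4)$-regular, so I would compute that each vertex $u_i$ has degree $3(n-4)+3=3n-9$, accounting for its neighbours inside $G_i$, the two cross-neighbourhoods prescribed by the construction, and $v_1,v_2,v_3$. Each of $v_1,v_2,v_3$ has degree exactly $3n$ and is non-adjacent only to the remaining two. As $|V(H)|=3n+3$, this yields $\Delta(H)=3n=|V(H)|-3$; in particular $H$ has no universal vertex and no vertex with at most one non-neighbour.

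With this in hand the case analysis is immediate. Suppose, for contradiction, $f$ has weight at most $3$, and let $B=\{u:f(u)=2\}$; note $|B|\le 1$, since $|B|\ge 2$ already forces weight $\ge 4$. If $B=\emptyset$, then every $0$-labelled vertex is undominated, so all $3n+3>3$ vertices must receive a positive label, contradicting the weight bound. If $B=\{x\}$, the remaining budget leaves at most one vertex $y$ with label $1$, so every vertex other than $x$ and $y$ must lie in $N(x)$; thus $x$ has at most one non-neighbour, contradicting the structural fact above. Therefore $\gamma_R(H)\ge 4$, and combined with the upper bound $\gamma_R(H)=4$.

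I expect the only nontrivial step to be the degree computation establishing $\Delta(H)=|V(H)|-3$: it requires careful bookkeeping of the three cross-neighbourhood edge sets together with the $(n-4)$-regularity of $\overline{G}$. Once that count is verified, both the failure of weight-$\le 3$ functions and the sufficiency of the weight-$4$ function follow with no further calculation.
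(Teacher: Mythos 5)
Your proof is correct, and the upper bound is identical to the paper's (label one of $v_1,v_2,v_3$ with $2$, the other two with $1$). The lower bound, however, is organized differently. The paper argues locally about the triple $\{v_1,v_2,v_3\}$: to dominate $V_1\cup V_2\cup V_3$ one of these three "should" be labeled $2$, and since they are pairwise non-adjacent the other two then force additional weight $\ge 2$. That argument is slightly informal (a label-$2$ vertex inside $V_1\cup V_2\cup V_3$ is not a priori excluded as the dominator). You instead prove the global structural fact $\Delta(H)=|V(H)|-3$ — your degree bookkeeping ($\overline{G}$ is $(n-4)$-regular, each $u_i$ has degree $3(n-4)+3=3n-9$, each $v_j$ has degree $3n$, and $|V(H)|=3n+3$) checks out — and then observe that a weight-$3$ RDF can afford only one label-$2$ vertex $x$ and one label-$1$ vertex $y$, forcing $x$ to have at most one non-neighbour, which is impossible. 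This buys a cleaner, fully rigorous case analysis that does not depend on guessing which vertex carries the label $2$; the paper's version is shorter but leans on the special role of $v_1,v_2,v_3$. Both establish $\gamma_R(H)\ge 4$, so your proof is a valid (and arguably tighter) alternative.
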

\begin{proof}
    Consider a graph $H \in \mathcal{F}$. One among the three vertices $v_1$, $v_2$ and $v_3$ is assigned the label 2 while the other two are assigned the label 1. All the other vertices are assigned the label 0 in $f$. Each vertex with label 0 is adjacent to $v_1$ (or $v_2$, $v_3$) with label 2. Thus, we have that $\gamma_{R}(H) \leq 4$. 
    
    In order to cover the vertices in $V_1 \cup V_2 \cup V_3$, one vertex among $\{v_1, v_2, v_3\}$ should be labeled 2. As $v_1$, $v_2$ and $v_3$ are non-adjacent, either both the vertices that are not assigned the label 2 from $\{v_1,v_2, v_3\}$ should be labeled 1 or at least one vertex in $V_1 \cup V_2 \cup V_3$ should be labeled 2. Thus, we have that $\gamma_{R}(H) \geq 4$. 
    
    Finally, we obtain that $\gamma_{R}(H) = 4$. 
\end{proof}
From Lemma \ref{f1}, we have the following result.
\begin{theorem}
    RD problem on graph class $\mathcal{F}$ is linear-time solvable.
\end{theorem}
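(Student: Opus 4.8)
The plan is to observe that this theorem is an essentially immediate consequence of Lemma~\ref{f1}. That lemma establishes $\gamma_{R}(H) = 4$ for \emph{every} $H \in \mathcal{F}$, a value completely independent of the structure of the underlying $3$-regular graph $G$ from which $H$ is built. Consequently, solving \rd{} on $\mathcal{F}$ requires no search: the optimal weight is the fixed constant $4$. For the decision version with input $(H,k)$, one reads the instance and answers \textsc{Yes} precisely when $k \geq 4$; for the optimization version one outputs $4$ directly. Both take linear time (indeed constant time once the input has been read).

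The only step requiring genuine work is producing an explicit optimal function as a witness, and here I would follow the construction already given in the proof of Lemma~\ref{f1}: assign label $2$ to one of the three special vertices $v_1, v_2, v_3$, label $1$ to the remaining two, and label $0$ to every vertex of $V_1 \cup V_2 \cup V_3$. To realize this in linear time I first identify $v_1, v_2, v_3$ by a single pass computing all vertex degrees. Writing $n = |V(G)|$, each $v_i$ is adjacent to all $3n$ vertices of $V_1 \cup V_2 \cup V_3$ and to no other special vertex, so $d(v_i) = 3n$; meanwhile every vertex of $V_1 \cup V_2 \cup V_3$ has degree $3(n-4)+3 = 3n-9$, since it is $(n-4)$-regular inside its own copy of $\overline{G}$, has $n-4$ neighbours in each of the other two copies (by the symmetry of the cross-copy adjacency rules), and is adjacent to the three special vertices. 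Because $3n > 3n-9$, the three vertices of maximum degree are exactly $v_1, v_2, v_3$, so they can be located in linear time, after which assigning the labels completes the witness in linear time.

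I expect no substantial obstacle, since Lemma~\ref{f1} carries all the weight and the theorem is effectively a corollary. The one point I would state carefully is the interpretation of the input. Under the standard convention that ``\rd{} on a graph class'' assumes the input already lies in the class, the argument above suffices verbatim, and the degree characterization is needed only to locate $v_1, v_2, v_3$ for the witness. If instead recognition of membership in $\mathcal{F}$ were demanded, that would be a separate and genuinely harder question; I would therefore flag the promise-problem assumption explicitly rather than attempt to certify membership.
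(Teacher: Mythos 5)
Your proposal matches the paper exactly: the paper derives this theorem as an immediate corollary of Lemma~\ref{f1}, since $\gamma_{R}(H)=4$ for every $H\in\mathcal{F}$ makes the answer a fixed constant. Your additional remarks on locating $v_1,v_2,v_3$ by degree (all of degree $3n$ versus $3n-9$ for the rest) and on the promise-problem convention are correct but go beyond what the paper bothers to write.
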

Now, we show that GRD problem on  $\mathcal{F}$ is NP-complete following a reduction from DS problem on $3$-regular graphs. Given a graph $G = (V, E)$, a set $S \subseteq V$ is a dominating set if every vertex $u \in V\setminus S$ has a neighbour in $S$. The decision version of DS problem asks whether there exists a dominating set of size at most $k$.
The result related to DS problem on 3-regular graphs is given as follows.
\begin{theorem}[\cite{Kikuno}]~\label{ds_3r}
    DS problem on 3-regular graphs is NP-complete.
\end{theorem}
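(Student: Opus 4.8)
The plan is to first observe membership in NP and then establish NP-hardness by a gadget reduction in which every gadget is engineered to be $3$-regular. Membership is immediate: given a graph $G$ and an integer $k$, a dominating set $S$ with $|S|\le k$ is a certificate, and we verify $|S|\le k$ and that every vertex lies in $N[S]$ in time linear in the size of $G$.

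For NP-hardness I would reduce from \textsc{3-SAT} (equivalently, one could start from \ds{} on general graphs). Given a formula $\varphi$, the reduction builds, for each variable, a \emph{variable gadget} whose minimum dominating sets are in bijection with the two truth values of that variable, and, for each clause, a \emph{clause vertex} that is dominated precisely when one of its three literals is set true. Variable gadgets are joined to the clause vertices by edges encoding the literal occurrences, so that a dominating set of a prescribed size exists if and only if $\varphi$ is satisfiable. This already yields NP-hardness of \ds{} on graphs of \emph{bounded} degree in a routine way; the substantive work is to turn the whole construction into a cubic graph.

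To enforce $3$-regularity I would (i) split each high-degree vertex along a short chain or tree of degree-$3$ vertices, distributing its incident edges among the leaves while preserving exactly which original vertices it dominates, and (ii) attach to every vertex of deficient degree a small \emph{padding gadget} that is self-dominating and raises the degree to exactly $3$. The padding gadget is chosen so that an optimal dominating set restricted to it has a fixed, known contribution independent of the outside assignment; one then shifts the target $k$ by the total contribution of all padding gadgets. Because each gadget is local, domination inside gadgets and domination of the original variable/clause vertices decouple, and the equivalence ``$\varphi$ satisfiable $\iff$ the cubic instance has a dominating set of the adjusted size'' follows from the bounded-degree case.

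The main obstacle is precisely this regularization step: the splitting and padding gadgets must simultaneously (a) bring \emph{every} vertex to degree exactly $3$, (b) alter the domination number by an amount that is independent of the truth assignment, and (c) create no ``cheap'' way to dominate the original clause or variable vertices from inside a gadget, which would corrupt the encoding. Securing all three conditions at once—rather than any two of them—is the delicate part of the construction, and it is where the correctness argument must be made with care.
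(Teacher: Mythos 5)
This statement is imported by the paper as a known result of Kikuno et al.\ and is not proved there, so the only question is whether your sketch stands on its own as a proof --- and it does not. What you have written is a plan that correctly identifies where the difficulty lies (the regularization step) and then stops exactly at that point. No variable gadget, no clause gadget, and no padding or splitting gadget is actually constructed; conditions (a)--(c) are stated as requirements rather than verified for any concrete construction. A proof of NP-hardness by reduction consists precisely of exhibiting such gadgets and proving the two directions of the equivalence, so deferring ``the delicate part'' leaves the argument without content.

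There is also a specific step that, as stated, would fail. You propose to split each high-degree vertex ``along a short chain or tree of degree-$3$ vertices, distributing its incident edges among the leaves while preserving exactly which original vertices it dominates.'' For domination this preservation is not achievable by a naive split: if $v$ is replaced by a path $v_1,v_2,\dots$ with its former neighbours distributed among the $v_i$, then placing $v_1$ in a dominating set no longer dominates the neighbours attached to $v_3$, so the set of vertices ``dominated by $v$'' is not preserved, and the claimed decoupling between gadget-internal domination and domination of the original vertices does not follow. The standard constructions (including the one in the cited reference) instead use literal paths or similar structures whose contribution to the domination number is controlled by a counting argument, together with an argument ruling out the ``cheap'' option of putting a degree-$3$ clause vertex itself into the dominating set. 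Until you exhibit gadgets with these properties and prove the forward and backward directions of the size equivalence, the statement remains unproved by your argument.
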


\medskip
\noindent \textbf{Construction.} Let $I = (G = (V, E), k)$ be an instance of DS problem, with $G$ being a $3$-regular graph. We construct an instance $I' = (G', k')$ of GRD problem as follows. We create three copies of $\overline{G}$, which are $G_1 = (V_1, E_1)$, $G_2 = (V_2, E_2)$ and $G_3 = (V_3, E_3)$. Vertex $u \in V(G)$ is denoted by $u_1$ in $G_1$, $u_2$ in $G_2$ and $u_3$ in $G_3$ respectively. For each vertex $u_1 \in V_1$, we make $u_1$ adjacent to every vertex in $N_{G_2}(u_2)$ and $N_{G_3}(u_3)$. For each vertex $u_2 \in V_2$, we make $u_2$ adjacent to every vertex in $N_{G_3}(u_3)$. We then introduce three vertices $v_1$, $v_2$ and $v_3$ and make each of them adjacent to every vertex in $V_1 \cup V_2 \cup V_3$ to obtain $G'$. It is easy to see that $G' \in \mathcal{F}$. We set $k' = 2k+2$. This concludes the construction of $I'$. See Figure~\ref{fig:fig1} for more details.
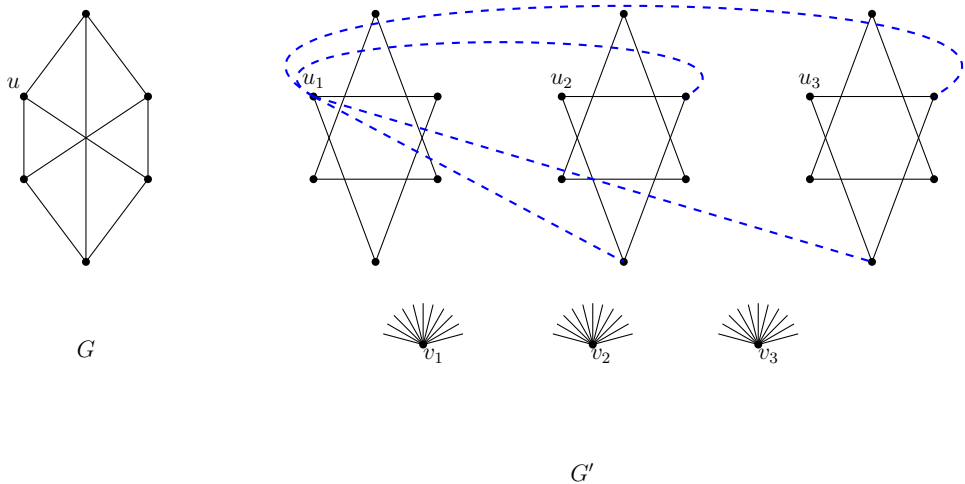
\begin{figure}
    \centering
     \begin{tikzpicture} [thick,scale=0.55, every node/.style={scale=0.75}]

        \filldraw (2, 1) circle (2pt) node[anchor=south]{};
        \filldraw (3.5, 3) circle (2pt) node[anchor=south]{};
        \filldraw (0.5, 3) circle (2pt) node[anchor=south]{};
        \filldraw (3.5, 5) circle (2pt) node[anchor=south]{};
        \filldraw (0.5, 5) circle (2pt) node[anchor=south]{};
        \filldraw (2, 7) circle (2pt) node[anchor=south]{};

        \draw[thin] (2, 1) -- (3.5, 3);
        \draw[thin] (2, 1) -- (0.5, 3);
        \draw[thin] (2, 1) -- (2, 7);
        \draw[thin] (3.5, 5) -- (3.5, 3);
        \draw[thin] (0.5, 5) -- (0.5, 3);
        \draw[thin] (3.5, 5) -- (2, 7);
        \draw[thin] (0.5, 5) -- (2, 7);
        \draw[thin] (3.5, 5) -- (0.5, 3);
        \draw[thin] (0.5, 5) -- (3.5, 3);
        
        \filldraw (0.25, 5) circle (0cm) node[anchor=south]{$u$};
        \filldraw (2, -1.5) circle (0cm) node[anchor=south]{$G$};

        \filldraw (9, 1) circle (2pt) node[anchor=south]{};
        \filldraw (10.5, 3) circle (2pt) node[anchor=south]{};
        \filldraw (7.5, 3) circle (2pt) node[anchor=south]{};
        \filldraw (10.5, 5) circle (2pt) node[anchor=south]{};
        \filldraw (7.5, 5) circle (2pt) node[anchor=south]{};
        \filldraw (9, 7) circle (2pt) node[anchor=south]{};

        \filldraw (15, 1) circle (2pt) node[anchor=south]{};
        \filldraw (16.5, 3) circle (2pt) node[anchor=south]{};
        \filldraw (13.5, 3) circle (2pt) node[anchor=south]{};
        \filldraw (16.5, 5) circle (2pt) node[anchor=south]{};
        \filldraw (13.5, 5) circle (2pt) node[anchor=south]{};
        \filldraw (15, 7) circle (2pt) node[anchor=south]{};

        \filldraw (21, 1) circle (2pt) node[anchor=south]{};
        \filldraw (22.5, 3) circle (2pt) node[anchor=south]{};
        \filldraw (19.5, 3) circle (2pt) node[anchor=south]{};
        \filldraw (22.5, 5) circle (2pt) node[anchor=south]{};
        \filldraw (19.5, 5) circle (2pt) node[anchor=south]{};
        \filldraw (21, 7) circle (2pt) node[anchor=south]{};

        \filldraw (7.5, 5) circle (0cm) node[anchor=south]{$u_1$};
        \filldraw (13.5, 5) circle (0cm) node[anchor=south]{$u_2$};
        \filldraw (19.5, 5) circle (0cm) node[anchor=south]{$u_3$};
        
        \filldraw (14, -4.5) circle (0cm) node[anchor=south]{$G'$};

        \draw[thin] (9, 1) -- (10.5, 5);
        \draw[thin] (9, 1) -- (7.5, 5);
        \draw[thin] (9, 7) -- (10.5, 3);
        \draw[thin] (9, 7) -- (7.5, 3);
        \draw[thin] (7.5, 3) -- (10.5, 3);
        \draw[thin] (10.5, 5) -- (7.5, 5);

        \draw[thin] (15, 1) -- (16.5, 5);
        \draw[thin] (15, 1) -- (13.5, 5);
        \draw[thin] (15, 7) -- (16.5, 3);
        \draw[thin] (15, 7) -- (13.5, 3);
        \draw[thin] (13.5, 3) -- (16.5, 3);
        \draw[thin] (16.5, 5) -- (13.5, 5);

        \draw[thin] (21, 1) -- (22.5, 5);
        \draw[thin] (21, 1) -- (19.5, 5);
        \draw[thin] (21, 7) -- (22.5, 3);
        \draw[thin] (21, 7) -- (19.5, 3);
        \draw[thin] (19.5, 3) -- (22.5, 3);
        \draw[thin] (22.5, 5) -- (19.5, 5);

        \draw[thin] (10.15, -1) -- (10.15,0);\draw[thin] (10.15, -1) -- (11.115,-0.741);
        \draw[thin] (10.15, -1) -- (11.016,-0.5);\draw[thin] (10.15, -1) -- (10.857,-0.293);
        \draw[thin] (10.15, -1) -- (10.65,-0.133);\draw[thin] (10.15, -1) -- (10.391,-0.034);

        \draw[thin] (10.15, -1) -- (9.909, -0.034);
        \draw[thin] (10.15, -1) -- (9.65, -0.133);\draw[thin] (10.15, -1) -- (9.443, -0.293);
        \draw[thin] (10.15, -1) -- (9.284, -0.5);\draw[thin] (10.15, -1) -- (9.185, -0.741);
        
        \draw[thick, blue, dashed] (7.5, 5) -- (15, 1);
        \draw[thick, blue, dashed] (7.5, 5) to [out=150,in=30] (16.5, 5);

        \draw[thick, blue, dashed] (7.5, 5) -- (21, 1);
        \draw[thick, blue, dashed] (7.5, 5) to [out=150,in=30] (22.5, 5);

        \filldraw (10.15, -1) circle (2pt) node[anchor=south]{};
        \filldraw (10.4, -1.65) circle (0cm) node[anchor=south]{$v_1$};

        \draw[thin] (14.25, -1) -- (14.25,0);
        \draw[thin] (14.25, -1) -- (15.215,-0.741);
        \draw[thin] (14.25, -1) -- (15.116,-0.5);\draw[thin] (14.25, -1) -- (14.957,-0.293);
        \draw[thin] (14.25, -1) -- (14.75,-0.134);\draw[thin] (14.25, -1) -- (14.491,-0.034);

        \draw[thin] (14.25, -1) -- (14.009, -0.034);
        \draw[thin] (14.25, -1) -- (13.75, -0.134);\draw[thin] (14.25, -1) -- (13.543, -0.293);
        \draw[thin] (14.25, -1) -- (13.384, -0.5);\draw[thin] (14.25, -1) -- (13.285, -0.741);

        \filldraw (18.25, -1) circle (2pt) node[anchor=south]{};

        \draw[thin] (18.25, -1) -- (18.25,0);
        \draw[thin] (18.25, -1) -- (19.215,-0.741);
        \draw[thin] (18.25, -1) -- (19.116,-0.5);\draw[thin] (18.25, -1) -- (18.957,-0.293);
        \draw[thin] (18.25, -1) -- (18.75,-0.134);\draw[thin] (18.25, -1) -- (18.491,-0.034);

        \draw[thin] (18.25, -1) -- (18.009, -0.034);
        \draw[thin] (18.25, -1) -- (17.75, -0.134);\draw[thin] (18.25, -1) -- (17.543, -0.293);
        \draw[thin] (18.25, -1) -- (17.384, -0.5);\draw[thin] (18.25, -1) -- (17.285, -0.741);

        \filldraw (14.25, -1) circle (2pt) node[anchor=south]{};
        \filldraw (14.45, -1.65) circle (0cm) node[anchor=south]{$v_2$};
        \filldraw (18.5, -1.65) circle (0cm) node[anchor=south]{$v_3$};
        
    \end{tikzpicture} \vspace{2mm}
        \caption{(left) A $3$-regular graph $G$; (right) The construction of  $G' \in \mathcal{F}$ from $G$. Only the adjacency of a vertex $u_1 \in G_1$ to its neighbors in $G_2$ and $G_3$ is shown in $G'$.}
    \label{fig:fig1}
\end{figure}
\begin{lemma}~\label{one2}
    Let $f$ be a minimum weighted global Roman dominating function for $G'$. One vertex among $v_1$,  $v_2$ and $v_3$ is assigned the label 2 in $f$ while the other two are assigned the label 0.
\end{lemma}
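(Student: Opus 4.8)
The plan is to pin down the labels of $v_1,v_2,v_3$ using the coverage requirement in $\overline{G'}$ together with an exchange argument. First I would record the neighbourhoods of the three special vertices. In $G'$ each $v_i$ is adjacent to every vertex of $V_1\cup V_2\cup V_3$ and to no other $v_j$; consequently, in $\overline{G'}$ each $v_i$ is adjacent to exactly the other two special vertices, i.e.\ $N_{\overline{G'}}(v_i)=\{v_1,v_2,v_3\}\setminus\{v_i\}$, and is non-adjacent to all of $V_1\cup V_2\cup V_3$. The immediate consequence I would extract is a feasibility constraint: if $f(v_i)=0$, then, since the only $\overline{G'}$-neighbours of $v_i$ are the other two special vertices, one of $v_1,v_2,v_3$ must carry label $2$. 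Hence $f(v_1)+f(v_2)+f(v_3)\ge 2$, and the only weight-$2$ pattern compatible with this constraint is exactly $\{2,0,0\}$ (the pattern $\{1,1,0\}$ is infeasible because the two $0$'s would need a $2$-labelled special neighbour).

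Next I would establish that every minimum function contains a vertex of $V_1\cup V_2\cup V_3$ labelled $2$. For this I need a cheap upper bound on $\gamma_{gR}(G')$. Taking a dominating set $S$ of $G$, I would set $f(v_1)=2$, label the copy $\{u_1 \mid u\in S\}$ with $2$, and label everything else $0$. A short check of the $\overline{G'}$-adjacencies (each $G_i$ induces a copy of $G$ in $\overline{G'}$, and for $i\ne j$ one has $u_i$ adjacent to $w_j$ in $\overline{G'}$ iff $u=w$ or $uw\in E(G)$) shows this is a valid global Roman dominating function of weight $2+2|S|\le 2+2n<3n$. Therefore a minimum $f$ cannot label all $3n$ vertices of $V_1\cup V_2\cup V_3$ with values $\ge 1$, so some copy vertex has label $0$; since its $\overline{G'}$-coverage cannot come from any $v_i$, it must have a copy-vertex neighbour of label $2$. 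Thus a vertex of $V_1\cup V_2\cup V_3$ carries label $2$ in any minimum $f$.

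Finally I would run the exchange argument to rule out $f(v_1)+f(v_2)+f(v_3)\ge 3$ for a minimum $f$. If no special vertex is labelled $2$, the feasibility constraint forbids any $0$, so all three carry label $1$; replacing this by $f(v_1)=2$, $f(v_2)=f(v_3)=0$ preserves feasibility ($v_2,v_3$ are covered in $\overline{G'}$ by $v_1$ and in $G'$ by the copy vertex of label $2$, while all copy vertices are now covered in $G'$ by $v_1$) and lowers the weight by $1$, contradicting minimality. Otherwise some $v_i$, say $v_1$, is labelled $2$; then any other special vertex with positive label can be reset to $0$ without losing coverage (it is covered in $\overline{G'}$ by $v_1$, covered in $G'$ by the copy vertex of label $2$, and any coverage it supplied in $G'$ is already supplied by $v_1$), again strictly decreasing the weight. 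Hence a minimum $f$ has $f(v_2)=f(v_3)=0$ and exactly one $v_i$ equal to $2$, which together with the first paragraph gives the claim.

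The main obstacle is the degenerate possibility that a candidate solution avoids every label-$2$ on $V_1\cup V_2\cup V_3$ and instead pays to make all copy vertices positive; this is precisely what the explicit upper-bound construction of the second paragraph rules out. Getting the $\overline{G'}$-adjacencies between the three copies exactly right is the one place where genuine care is needed, but everything else reduces to the short coverage bookkeeping of the exchange steps.
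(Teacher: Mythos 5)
Your proof is correct and follows essentially the same route as the paper's: observe that the only non-neighbours of each $v_i$ in $G'$ are the other two special vertices, so the feasible patterns are $\{2,0,0\}$ or all-positive, and then an exchange argument shows $\{2,0,0\}$ is strictly better. Your version is in fact more careful than the paper's in one respect: the explicit upper bound $\gamma_{gR}(G')\le 2+2|S|<3n$, used to guarantee that some vertex of $V_1\cup V_2\cup V_3$ carries label $2$ (so that $v_2,v_3$ remain covered in $G'$ after the exchange), is a step the paper asserts without justification.
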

\begin{proof}
        Each vertex among $v_1$, $v_2$ and $v_3$ is adjacent to every vertex in $V_1 \cup V_2 \cup V_3$ and its only non-neighbours in $G'$ are the other two vertices from $v_1$, $v_2$ and $v_3$. There are two possible ways to assign labels to $v_1$, $v_2$ and $v_3$ in $f$:
    \begin{itemize}
        \item label 2 to one of them and 0 to the other two, or
        \item label 1 to all three.
    \end{itemize}

In the second case, assigning label \(1\) to each of \(v_1\), \(v_2\) and \(v_3\) yields a total weight of three but do not cover any vertex in $V_1 \cup V_2 \cup V_3$.  
In contrast, in the first case, assigning label \(2\) to one of the vertices and label \(0\) to the other two yields a total weight of two and also covers all the vertices in $V_1 \cup V_2 \cup V_3$. Therefore, the labeling in which one among \(v_1\), \(v_2\) and $v_3$ receives label \(2\) and the other two receives label \(0\) is optimal. Thus, we have the label 2 to one among $v_1$, $v_2$ and $v_3$ while the other two are assigned the label 0 in $f$. We assume that $f(v_1) = 2$, $f(v_2) = 0$ and $f(v_3) = 0$.

To serve as a non-neighbour with label 2 to the vertices in $V_1 \cup V_2 \cup V_3$, at least one vertex in $V_1 \cup V_2 \cup V_3$ should be labeled 2. Vertices $v_2$ and $v_3$ are covered, as they have a neighbour in $V_1 \cup V_2 \cup V_3$ with label 2 and a non-neighbour $v_1$ with label 2. Hence, all the vertices in $G'$ are covered and at most one among $v_1$,  $v_2$ and $v_3$ is assigned the label 2 in $f$ while the other two are assigned the label 0.
\end{proof}
Without loss of generality, for the rest of this subsection, we assume that $f(v_1) =2$, $f(v_2) = 0$ and $f(v_3) = 0$.
\begin{lemma}~\label{f2}
    Let $f$ be a minimum weighted global Roman dominating function for $G'$. Each vertex in $G'$ is assigned the labels from $\{0, 2\}$ in $f$.
\end{lemma}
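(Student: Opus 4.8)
The plan is to show that any label-$1$ vertex can be eliminated without increasing the weight, contradicting minimality. By Lemma~\ref{one2} the vertices $v_1, v_2, v_3$ already receive labels from $\{0,2\}$ (with $f(v_1)=2$ and $f(v_2)=f(v_3)=0$), so it suffices to rule out label $1$ on the vertices of $V_1 \cup V_2 \cup V_3$. Throughout I will use the principle that a vertex labeled $1$ is only self-covering: in both $G'$ and $\overline{G'}$ a label-$0$ vertex is dominated exclusively by label-$2$ vertices, so changing a vertex from label $1$ to label $0$ can only affect the coverage of that vertex itself, never of any other vertex.

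The first key step is to record a clean description of adjacency inside $V_1 \cup V_2 \cup V_3$. Unwinding the construction (each $G_i$ is a copy of $\overline{G}$, and the inter-copy edges join $u_i$ to $N_{G_j}(u_j)$), one checks that for any two distinct vertices $u_j$ and $a_i$ of $V_1 \cup V_2 \cup V_3$ we have $u_j \not\sim a_i$ in $G'$ if and only if $a \in N_G[u]$; that is, non-adjacency depends only on the underlying vertices of $G$ through the closed neighbourhood. Let $S$ be the set of vertices of $V_1 \cup V_2 \cup V_3$ labeled $2$ and let $D_S \subseteq V$ be the set of their underlying vertices. Combining this with the fact that $v_1$ is adjacent to all of $V_1 \cup V_2 \cup V_3$ while $v_2, v_3$ are labeled $0$, a vertex $u_i$ has a non-neighbour labeled $2$ precisely when $N_G[u] \cap D_S \neq \emptyset$, i.e.\ when $u$ is dominated by $D_S$ in $G$.

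Now suppose, for contradiction, that some $w = u_i \in V_1 \cup V_2 \cup V_3$ has $f(w) = 1$, and split on whether $u$ is dominated by $D_S$. If $u$ is dominated, then $w$ has both a neighbour labeled $2$ (namely $v_1$) and a non-neighbour labeled $2$, so resetting $f(w) = 0$ keeps $f$ a global Roman dominating function and strictly lowers the weight, a contradiction. If $u$ is not dominated by $D_S$, then by the characterization none of the three copies $u_1, u_2, u_3$ can be labeled $0$, and none can be labeled $2$ (that would place $u$ into $D_S$); hence all three are forced to carry label $1$, at a cost of $3$. Here I would relabel $u_1 \mapsto 2$ and $u_2, u_3 \mapsto 0$: since $u_1 \not\sim u_2$ and $u_1 \not\sim u_3$ (as $u \in N_G[u]$), the new label-$2$ vertex $u_1$ supplies the required non-neighbour labeled $2$ for $u_2$ and $u_3$, whose neighbour-with-label-$2$ need is met by $v_1$; this modification again preserves the global Roman domination property while dropping the weight from $3$ to $2$, a contradiction.

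The only delicate point, and the step I expect to be the crux, is this second case: one must verify that forcing all three copies to label $1$ is genuinely unavoidable when $u$ is undominated, and that the $(2,0,0)$ swap disturbs the coverage of no vertex outside $\{u_1, u_2, u_3\}$. Both follow from the self-covering principle, since no vertex ever relies on a label-$1$ vertex for its domination, so removing the two label-$1$'s harms nothing while the added label-$2$ only helps. Having ruled out label $1$ on $V_1 \cup V_2 \cup V_3$ and recalling that $v_1, v_2, v_3$ already lie in $\{0,2\}$, every vertex of $G'$ is labeled from $\{0,2\}$, as claimed.
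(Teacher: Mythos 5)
Your proof is correct and takes essentially the same route as the paper: both arguments rest on the facts that the three copies $u_1,u_2,u_3$ are false twins with a common non-neighbourhood (which you make explicit via the characterization $u_j\not\sim a_i$ iff $a\in N_G[u]$), that $v_1$ supplies the in-graph neighbour with label $2$, and that covering the triple by one label-$2$ non-neighbour costs $2$ whereas labelling all three with $1$ costs $3$. Your write-up is somewhat more careful than the paper's (explicitly choosing $u_1$ as the vertex to promote to label $2$ and checking that no other vertex's coverage is disturbed), but the underlying idea is identical.
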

\begin{proof}
    Each vertex $w \in V_1 \cup V_2 \cup V_3$ with $f(w) = 0$ has a neighbour $v_1$ with $f(v_1) = 2$. Therefore, we only look for a non-neighbour to $w$ with label 2. Consider the vertices $u_1$, $u_2$ and $u_3$ that are the copies of vertex $u$ in $G_1$, $G_2$ and $G_3$ respectively. As $u_1$, $u_2$ and $u_3$ are false twins, at most one of them is assigned the label 2 in $f$. In order for the vertices $u_1$, $u_2$ and $u_3$ to be covered, either all the three vertices $u_1$, $u_2$ and $u_3$ should be assigned the label at least 1 or one vertex from the set $(V'\setminus N_{G'}(u_1))$ should be assigned the label 2 in $f$. Let us assume that vertex $u_1$ is assigned the label 1, then $u_2$ and $u_3$ must be assigned the label 1 as well, for $f$ to be minimum weighted. Instead, we can label one vertex from the set $(V'\setminus N_{G'}(u_1))$ with 2 to cover all three vertices $u_1$, $u_2$ and $u_3$, which is an optimal labeling compared to the labeling of 1 for three vertices $u_1$, $u_2$ and $u_3$. Therefore, we conclude that every $f$ assigns the labels from $\{0,2\}$ to the vertices in $G'$. 
\end{proof}
\begin{lemma}~\label{f3}
    $G$ has a dominating set of size at most $k$ if and only if $G'$ has a global Roman dominating function of weight at most $2k+2$.
\end{lemma}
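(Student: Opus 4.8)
The plan is to prove both directions by exploiting the false-twin structure of the copies together with the fact that, by Lemmas~\ref{one2} and~\ref{f2}, it suffices to consider a minimum weighted global Roman dominating function $f$ of $G'$ in the canonical form $f(v_1)=2$, $f(v_2)=f(v_3)=0$, with every remaining vertex receiving a label from $\{0,2\}$. The single observation that drives everything is an explicit description of the non-neighbourhoods in $G'$. For a vertex $u\in V(G)$ with copies $u_1,u_2,u_3$, I would first verify from the construction that $N_{G'}(u_1)=N_{G'}(u_2)=N_{G'}(u_3)=\{w_1,w_2,w_3 : w\in N_{\overline G}(u)\}\cup\{v_1,v_2,v_3\}$; consequently the non-neighbours of any copy $u_j$ in $G'$ are exactly the other two copies $\{u_1,u_2,u_3\}\setminus\{u_j\}$ together with all copies $\{y_1,y_2,y_3 : y\in N_G(u)\}$ of the $G$-neighbours of $u$. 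Since $f(v_1)=2$ and $v_1$ is adjacent to all of $V_1\cup V_2\cup V_3$, the Roman condition in $G'$ itself is automatic for every vertex of $V_1\cup V_2\cup V_3$, so throughout, the only covering that needs checking for these vertices is the existence of a \emph{non-neighbour} carrying label $2$.

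For the forward direction ($\Rightarrow$), given a dominating set $S\subseteq V(G)$ with $|S|\le k$, I would define $f$ by $f(v_1)=2$, $f(u_1)=2$ for every $u\in S$, and $f\equiv 0$ elsewhere; its weight is $2+2|S|\le 2k+2$. It remains to check that every label-$0$ vertex has a label-$2$ neighbour in $G'$ and a label-$2$ non-neighbour. The vertices $v_2,v_3$ are handled by their common neighbour set $V_1\cup V_2\cup V_3$ (which contains a label-$2$ copy whenever $S\neq\emptyset$) and by their non-neighbour $v_1$. For a label-$0$ copy $w_j$, the $G'$-neighbour is $v_1$, and for the non-neighbour I would split on whether $w\in S$: if $w\in S$ then the sibling copy $w_1$ has label $2$ and is a non-neighbour of $w_j$; if $w\notin S$, domination gives $x\in S\cap N_G(w)$, and any copy $x_i$ is a label-$2$ non-neighbour of $w_j$ by the description above.

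For the backward direction ($\Leftarrow$), I would take a minimum weighted global Roman dominating function $f$ of weight at most $2k+2$ and put it in the canonical form. Writing $T=\{w\in V_1\cup V_2\cup V_3 : f(w)=2\}$, the weight identity $2+2|T|\le 2k+2$ forces $|T|\le k$, and I set $S=\{u\in V(G) : u_j\in T \text{ for some } j\}$, so that $|S|\le|T|\le k$. To see that $S$ dominates $G$, take any $u\notin S$; then all three copies $u_1,u_2,u_3$ carry label $0$, so the global Roman condition requires a label-$2$ non-neighbour of $u_1$. By the non-neighbourhood description its non-neighbours are $u_2,u_3$ (both label $0$) and the copies of $N_G(u)$; hence some copy $y_j$ with $y\in N_G(u)$ lies in $T$, giving $y\in S\cap N_G(u)$. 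Thus every vertex is in $S$ or adjacent to $S$, and $S$ is a dominating set of size at most $k$.

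The only genuinely delicate step is the very first one: pinning down $N_{G'}(u_j)$ and hence the non-neighbourhoods, because the construction adds cross-edges between the three copies in an asymmetric way ($u_1$ to $N_{G_2}(u_2)$ and $N_{G_3}(u_3)$, and $u_2$ to $N_{G_3}(u_3)$). I expect the main obstacle to be checking carefully that these asymmetric additions nonetheless make $u_1,u_2,u_3$ genuine false twins with the stated common neighbourhood; once that symmetry is established, both implications reduce to the short domination arguments above.
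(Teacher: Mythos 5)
Your proposal is correct and follows essentially the same route as the paper: both directions rest on the normalization from Lemmas~\ref{one2} and~\ref{f2} and on the observation that $u_1,u_2,u_3$ are false twins whose non-neighbours in $G'$ are exactly the other copies of $u$ together with the copies of $N_G(u)$. If anything, your explicit verification of the common neighbourhood and the clean extraction of $S$ from the label-$2$ set $T$ make the backward direction slightly more precise than the paper's own write-up.
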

\begin{proof}
\noindent($\Rightarrow$) Let $S$ be a dominating set for $G$ of size at most $k$. We obtain a global Roman dominating function $f$ for $G'$ as follows.

For each vertex $u \in S$, we assign label 2 to $u_1$ and label 0 to both $u_2$ and $u_3$ in $f$. All the other vertices in $G'$, excluding $v_1$, $v_2$ and $v_3$ are assigned label 0. Finally, vertex $v_1$ receives label 2 while both $v_2$ and $v_3$ are assigned label 0. 

Each vertex $u_i \in V_1 \cup V_2 \cup V_3$ with $f(u_i) = 0$ has a neighbour $v_1 \in V'$ with $f(v_1) = 2$ and also has a non-neighbour $x_1$ with $f(x_1) = 2$ where $x \in N_G(u)$. Vertices $v_2$ and $v_3$ have a neighbour $x_1 \in V_1 \cup V_2 \cup V_3$ with $f(x_1)=2$ for some $x \in V(G)$ and also has a non-neighbour $v_1$ with $f(v_1)=2$. All the other vertices in $G'$ are covered, as they have the label greater than 0. Thus, we conclude that $f$ is a global Roman dominating function of weight at most $2k+2$.

\noindent($\Leftarrow$) Let $f$ be a global Roman dominating function of weight at most $2k+2$. We obtain a dominating set $S$ as follows.

From Lemma \ref{one2}, we have that $f(v_1)=2$, $f(v_2)=0$ and $f(v_3) = 0$. At this point, we have utilized a weight of two for the set $\{v_1, v_2, v_3\}$ and a weight of $2k$ is available for the vertices in $V_1 \cup V_2 \cup V_3$.

Each vertex $u_i \in V_1 \cup V_2 \cup V_3$ with $f(u_i) = 0$ has a neighbour $v_1 \in V'$ with $f(v_1) = 2$. Due to Lemma \ref{f2}, we assume that no vertex from $V'$ is assigned the label 1 in $f$. Each vertex from $V'$ with label 0 must have a non-neighbour with label 2. We assign label 2 to at most $k$ vertices in $G' \setminus \{v_1, v_2, v_3\}$ and the rest of the vertices are assigned the label 0. The non-neighbour to $u_i$ in $G'$ that is assigned the label 2 is a neighbour to $u$ in $G$ that is in $S$. The $k$ vertices in $G' \setminus \{v_1, v_2, v_3\}$ that are labeled 2 in $f$ will correspond to the dominating set $S$ of size at most $k$ in $G$. 
\end{proof} 
As $G' \in \mathcal{F}$, with the help of Theorem \ref{ds_3r} and Lemma \ref{f3}, we arrive at the following theorem. 
\begin{theorem}
    GRD problem on graph class $\mathcal{F}$ is NP-complete.
\end{theorem}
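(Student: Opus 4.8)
The plan is to establish the two components of NP-completeness separately: membership in NP and NP-hardness via a polynomial-time many-one reduction. For membership, I would observe that given a candidate labeling $f : V(G') \to \{0,1,2\}$, one can verify in polynomial time that $\sum_{u} f(u) \leq k'$ and that every vertex $u$ with $f(u)=0$ possesses both a neighbour in $G'$ and a neighbour in $\overline{G'}$ carrying label $2$; checking adjacency and non-adjacency for each zero-labeled vertex costs $O(n^2)$ time, so GRD on $\mathcal{F}$ lies in NP.

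For hardness, I would invoke the reduction already set up from DS on $3$-regular graphs. First I would confirm that the map $I = (G,k) \mapsto I' = (G', 2k+2)$ is computable in polynomial time: building three copies of $\overline{G}$, inserting the prescribed cross-copy adjacencies, and attaching the three additional vertices $v_1, v_2, v_3$ each joined to all of $V_1 \cup V_2 \cup V_3$ all take time polynomial in $|V(G)|$, and by construction the output $G'$ belongs to $\mathcal{F}$.

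The correctness of the reduction is exactly the content of Lemma \ref{f3}, which I would cite directly: $G$ admits a dominating set of size at most $k$ if and only if $G'$ admits a global Roman dominating function of weight at most $2k+2 = k'$. Chaining this equivalence with the NP-completeness of DS on $3$-regular graphs (Theorem \ref{ds_3r}) yields NP-hardness, and combined with membership in NP this completes the proof.

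The genuine difficulty does not lie in the assembly above but in the structural lemmas it relies on. In a from-scratch argument the hard part would be the backward direction of Lemma \ref{f3}: starting from an \emph{arbitrary} minimum-weight global Roman dominating function of weight at most $2k+2$, one must argue that its label-$2$ support on $V_1 \cup V_2 \cup V_3$ projects to a dominating set of $G$ of size at most $k$. This requires the preparatory facts that exactly one of $v_1, v_2, v_3$ carries label $2$ while the other two carry $0$ (Lemma \ref{one2}), and that no vertex is ever assigned label $1$ (Lemma \ref{f2}). These normalizations ensure that the remaining budget $2k$ over $V_1 \cup V_2 \cup V_3$ corresponds to at most $k$ vertices of label $2$, whose false-twin structure forces the ``non-neighbour-with-label-$2$'' requirement to coincide precisely with domination in $G$. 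With those lemmas in hand, the theorem itself follows immediately.
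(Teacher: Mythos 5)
Your proposal is correct and follows essentially the same route as the paper: the theorem is obtained by combining the polynomial-time construction of $G' \in \mathcal{F}$ with Lemma \ref{f3} and the NP-completeness of DS on $3$-regular graphs (Theorem \ref{ds_3r}), with the structural Lemmas \ref{one2} and \ref{f2} doing the real work in the backward direction. Your explicit NP-membership check is a small addition the paper leaves implicit, but the argument is otherwise identical.
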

\subsection{Graph class: $\mathcal{G}$}
In this subsection, we construct a graph class $\mathcal{G}$ for which GRD problem can be solved in linear-time whereas RD problem is NP-complete. 

\begin{figure}[t]
    \centering
     \begin{tikzpicture} [thick,scale=0.75, every node/.style={scale=1}]
        \draw (4, 3.875) ellipse (1.25 and 4.75);
        \draw (9, 3.875) ellipse (1.25 and 4.75);

        \filldraw (4, 6.5) circle (2pt) node[anchor=south]{};
        \filldraw (4, 4.75) circle (2pt) node[anchor=south]{};
        \filldraw (4, 3.25) circle (2pt) node[anchor=south]{};
        \filldraw (4, 1.5) circle (2pt) node[anchor=south]{};

        \filldraw (9, 0.5) circle (2pt) node[anchor=south]{};
        \filldraw (9, 7.5) circle (2pt) node[anchor=south]{};
        \filldraw (9, 6.5) circle (2pt) node[anchor=south]{};
        \filldraw (9, 5.5) circle (2pt) node[anchor=south]{};
        \filldraw (9, 4.5) circle (2pt) node[anchor=south]{};
        \filldraw (9, 3.5) circle (2pt) node[anchor=south]{};
        \filldraw (9, 2.5) circle (2pt) node[anchor=south]{};
        \filldraw (9, 1.5) circle (2pt) node[anchor=south]{};

        \filldraw (12, 6.5) circle (2pt) node[anchor=south]{};
        \filldraw (12, 5.5) circle (2pt) node[anchor=south]{};
        \filldraw (12, 4.5) circle (2pt) node[anchor=south]{};
        \filldraw (12, 3.5) circle (2pt) node[anchor=south]{};
        \filldraw (12, 2.5) circle (2pt) node[anchor=south]{};
        \filldraw (12, 1.5) circle (2pt) node[anchor=south]{};
        \filldraw (12, 0.5) circle (2pt) node[anchor=south]{};
        \filldraw (12, 7.5) circle (2pt) node[anchor=south]{};

        \filldraw (14, 7.8) circle (1pt) node[anchor=south]{};
        \filldraw (14, 6.8) circle (1pt) node[anchor=south]{};
        \filldraw (14, 7.2) circle (1pt) node[anchor=south]{};
        \filldraw (14, 5.8) circle (1pt) node[anchor=south]{};
        \filldraw (14, 6.2) circle (1pt) node[anchor=south]{};
        \filldraw (14, 4.8) circle (1pt) node[anchor=south]{};
        \filldraw (14, 5.2) circle (1pt) node[anchor=south]{};
        \filldraw (14, 3.8) circle (1pt) node[anchor=south]{};
        \filldraw (14, 4.2) circle (1pt) node[anchor=south]{};
        \filldraw (14, 2.8) circle (1pt) node[anchor=south]{};
        \filldraw (14, 3.2) circle (1pt) node[anchor=south]{};
        \filldraw (14, 1.8) circle (1pt) node[anchor=south]{};
        \filldraw (14, 2.2) circle (1pt) node[anchor=south]{};
        \filldraw (14, 0.8) circle (1pt) node[anchor=south]{};
        \filldraw (14, 1.2) circle (1pt) node[anchor=south]{};
        \filldraw (14, 0.2) circle (1pt) node[anchor=south]{};

        \filldraw (16, 7.5) circle (2pt) node[anchor=south]{};
        \filldraw (16, 5.5) circle (2pt) node[anchor=south]{};
        \filldraw (16, 3.5) circle (2pt) node[anchor=south]{};
        \filldraw (16, 1.5) circle (2pt) node[anchor=south]{};

        \filldraw (1.25, 4) circle (2pt) node[anchor=south]{};

        \filldraw (1.25, 3) circle (2pt) node[anchor=south]{};
        \filldraw (0, 4) circle (2pt) node[anchor=south]{};
        \filldraw (1.25, 5) circle (2pt) node[anchor=south]{};

        \filldraw (14.85, 6.7) circle (1pt) node[anchor=south]{};
        \filldraw (14.85, 6.5) circle (1pt) node[anchor=south]{};
        \filldraw (14.85, 6.3) circle (1pt) node[anchor=south]{};
        
        \draw[thin, gray] (14.85, 6.7) -- (14, 6.8);
        \draw[thin, gray] (14.85, 6.5) -- (14, 6.8);
        \draw[thin, gray] (14.85, 6.3) -- (14, 6.8);       
        
        \draw[thin, gray] (14.85, 6.7) -- (14, 6.2);
        \draw[thin, gray] (14.85, 6.5) -- (14, 6.2);
        \draw[thin, gray] (14.85, 6.3) -- (14, 6.2);         
        
        \draw[thin, gray] (14.85, 6.7) -- (12, 6.5);
        \draw[thin, gray] (14.85, 6.5) -- (12, 6.5);
        \draw[thin, gray] (14.85, 6.3) -- (12, 6.5);    
        
        \draw[thin] (0, 4) -- (1.25, 4);
        \draw[thin] (0, 4) -- (1.25, 3);
        \draw[thin] (1.25, 5) -- (0, 4);

        \draw[thin] (1.25, 4) -- (4, 6.5);
        \draw[thin] (1.25, 4) -- (4, 4.75);
        \draw[thin] (1.25, 4) -- (4, 3.25);
        \draw[thin] (1.25, 4) -- (4, 1.5);

        \draw[thin] (4, 6.5) -- (9, 7.5);
        \draw[thin] (4, 6.5) -- (9, 6.5);
        \draw[thin] (4, 6.5) -- (9, 5.5);
        \draw[thin] (4, 6.5) -- (9, 4.5);
        \draw[thin] (4, 4.75) -- (9, 6.5);
        \draw[thin] (4, 4.75) -- (9, 4.5);
        \draw[thin] (4, 4.75) -- (9, 2.5);
        \draw[thin] (4, 4.75) -- (9, 1.5);
        \draw[thin] (4, 3.25) -- (9, 5.5);
        \draw[thin] (4, 3.25) -- (9, 3.5);
        \draw[thin] (4, 3.25) -- (9, 2.5);
        \draw[thin] (4, 3.25) -- (9, 1.5);
        \draw[thin] (4, 1.5) -- (9, 3.5);
        \draw[thin] (4, 1.5) -- (9, 2.5);
        \draw[thin] (4, 1.5) -- (9, 1.5);
        \draw[thin] (4, 1.5) -- (9, 0.5);

        \draw[thin] (9, 0.5) -- (12, 0.5);
        \draw[thin] (9, 1.5) -- (12, 1.5);
        \draw[thin] (9, 2.5) -- (12, 2.5);
        \draw[thin] (9, 3.5) -- (12, 3.5);
        \draw[thin] (9, 4.5) -- (12, 4.5);
        \draw[thin] (9, 5.5) -- (12, 5.5);
        \draw[thin] (9, 6.5) -- (12, 6.5);
        \draw[thin] (9, 7.5) -- (12, 7.5);

        \draw[thin] (14, 0.2) -- (12, 0.5);
        \draw[thin] (14, 0.8) -- (12, 0.5);
        \draw[thin] (14, 1.2) -- (12, 1.5);
        \draw[thin] (14, 1.8) -- (12, 1.5);
        \draw[thin] (14, 2.2) -- (12, 2.5);
        \draw[thin] (14, 2.8) -- (12, 2.5);
        \draw[thin] (14, 3.2) -- (12, 3.5);
        \draw[thin] (14, 3.8) -- (12, 3.5);
        \draw[thin] (14, 4.2) -- (12, 4.5);
        \draw[thin] (14, 4.8) -- (12, 4.5);
        \draw[thin] (14, 5.2) -- (12, 5.5);
        \draw[thin] (14, 5.8) -- (12, 5.5);
        \draw[thin] (14, 6.2) -- (12, 6.5);
        \draw[thin] (14, 6.8) -- (12, 6.5);
        \draw[thin] (14, 7.2) -- (12, 7.5);
        \draw[thin] (14, 7.8) -- (12, 7.5);

        \draw[thin] (14, 1.2) -- (16, 1.5);
        \draw[thin] (14, 1.8) -- (16, 1.5);
        \draw[thin] (14, 3.2) -- (16, 3.5);
        \draw[thin] (14, 3.8) -- (16, 3.5);
        \draw[thin] (14, 5.2) -- (16, 5.5);
        \draw[thin] (14, 5.8) -- (16, 5.5);
        \draw[thin] (14, 7.2) -- (16, 7.5);
        \draw[thin] (14, 7.8) -- (16, 7.5);

        \draw[thin] (14, 0.8) -- (14, 1.2);
        \draw[thin] (14, 1.8) -- (14, 2.2);
        \draw[thin] (14, 2.8) -- (14, 3.2);
        \draw[thin] (14, 3.8) -- (14, 4.2);
        \draw[thin] (14, 4.8) -- (14, 5.2);
        \draw[thin] (14, 5.8) -- (14, 6.2);
        \draw[thin] (14, 6.8) -- (14, 7.2);
        \draw[thin] (14, 7.8) to [out=90,in=90] (18, 6);
        \draw[thin] (18, 2) -- (18, 6);
        \draw[thin] (18, 2) to [out=270,in=270] (14, 0.2);

        \draw[thin] (1.25, 5) -- (4, 6.5);
        \draw[thin] (1.25, 5) -- (4, 4.75);
        \draw[thin] (1.25, 5) -- (4, 3.25);
        \draw[thin] (1.25, 5) -- (4, 1.5);        
        \draw[thin] (1.25, 3) -- (4, 6.5);
        \draw[thin] (1.25, 3) -- (4, 4.75);
        \draw[thin] (1.25, 3) -- (4, 3.25);
        \draw[thin] (1.25, 3) -- (4, 1.5);

        \draw[thick, dotted] (11.5, 8) -- (12.5, 8);
        \draw[thick, dotted] (11.5, 0) -- (11.5, 8);
        \draw[thick, dotted] (12.5, 0) -- (12.5, 8);
        \draw[thick, dotted] (11.5, 0) -- (12.5, 0);

        \draw[thick, dotted] (13.5, 8) -- (14.5, 8);
        \draw[thick, dotted] (13.5, 0) -- (13.5, 8);
        \draw[thick, dotted] (14.5, 0) -- (14.5, 8);
        \draw[thick, dotted] (13.5, 0) -- (14.5, 0);

        \draw[thick, dotted] (15.5, 7.75) -- (16.5, 7.75);
        \draw[thick, dotted] (15.5, 0.25) -- (15.5, 7.75);
        \draw[thick, dotted] (16.5, 0.25) -- (16.5, 7.75);
        \draw[thick, dotted] (15.5, 0.25) -- (16.5, 0.25);
        
        \draw[thick] (1.25, 5) to [out=120,in=180] (7, 10); 
        \draw[thick] (7, 10) to [out=0,in=150] (12, 8);
        \draw[thick] (7, 10) to [out=0,in=150] (13.8, 8);
        \draw[thick] (1.25, 3) to [out=-120,in=180] (7, -2);
        \draw[thick] (7, -2) to [out=0,in=210] (12, 0);
        \draw[thick] (7, -2) to [out=0,in=210] (13.8, 0);

        \filldraw (5, -1) circle (0cm) node[anchor=south]{$A$};
        \filldraw (7.75, -1) circle (0cm) node[anchor=south]{$B$};
        \filldraw (11, -0.25) circle (0cm) node[anchor=south]{$P$};
        \filldraw (15, -0.25) circle (0cm) node[anchor=south]{$Q$};
        \filldraw (17, 0.5) circle (0cm) node[anchor=south]{$R$};

        \filldraw (15.2, 6.5) circle (0cm) node[anchor=south]{\tiny $s_2^1$};
        \filldraw (15.2, 5.9) circle (0cm) node[anchor=south]{\tiny $s_2^3$};

        \filldraw (9.25, 7.5) circle (0cm) node[anchor=south]{$b_1$};
        \filldraw (9.25, 6.5) circle (0cm) node[anchor=south]{$b_2$};
        \filldraw (9.25, 5.5) circle (0cm) node[anchor=south]{$b_3$};
        \filldraw (9.25, 4.5) circle (0cm) node[anchor=south]{$b_4$};
        \filldraw (9.25, 3.5) circle (0cm) node[anchor=south]{$b_5$};
        \filldraw (9.25, 2.5) circle (0cm) node[anchor=south]{$b_6$};
        \filldraw (9.25, 1.5) circle (0cm) node[anchor=south]{$b_7$};
        \filldraw (9.25, 0.5) circle (0cm) node[anchor=south]{$b_8$};

        \filldraw (4.25, 6.5) circle (0cm) node[anchor=south]{$a_1$};
        \filldraw (4.25, 4.85) circle (0cm) node[anchor=south]{$a_2$};
        \filldraw (4.25, 2.5) circle (0cm) node[anchor=south]{$a_3$};
        \filldraw (4.25, 0.75) circle (0cm) node[anchor=south]{$a_4$};
        
        \filldraw (-0.25, 4) circle (0cm) node[anchor=south]{$u$};
        \filldraw (0.9, 2.8) circle (0cm) node[anchor=south]{$x$};
        \filldraw (0.9, 4) circle (0cm) node[anchor=south]{$w$};
        \filldraw (0.9, 4.8) circle (0cm) node[anchor=south]{$v$};
        
    \end{tikzpicture}
        \caption{Graph $G' \in \mathcal{G}$ constructed from an X4C problem instance: $X = \{1,2,3,4,5,6, 7, 8\}$ and $C$ = $\{C_1 =  \{1,2,3,4\}$, $C_2 =  \{2,4,6,7\}$, $C_3 =  \{3,5,6,7\}$ and $C_4 =  \{5,6,7,8\}\}$. A thick edge between a vertex $v$ and a set $Y$ indicates that $v$ is adjacent to every vertex of set $Y$. The edges between the vertices of set $A$ are not shown in the figure. All the vertices of the set $S$ are not shown in the figure due to space constraints. }
    \label{fig:fig2}
\end{figure}
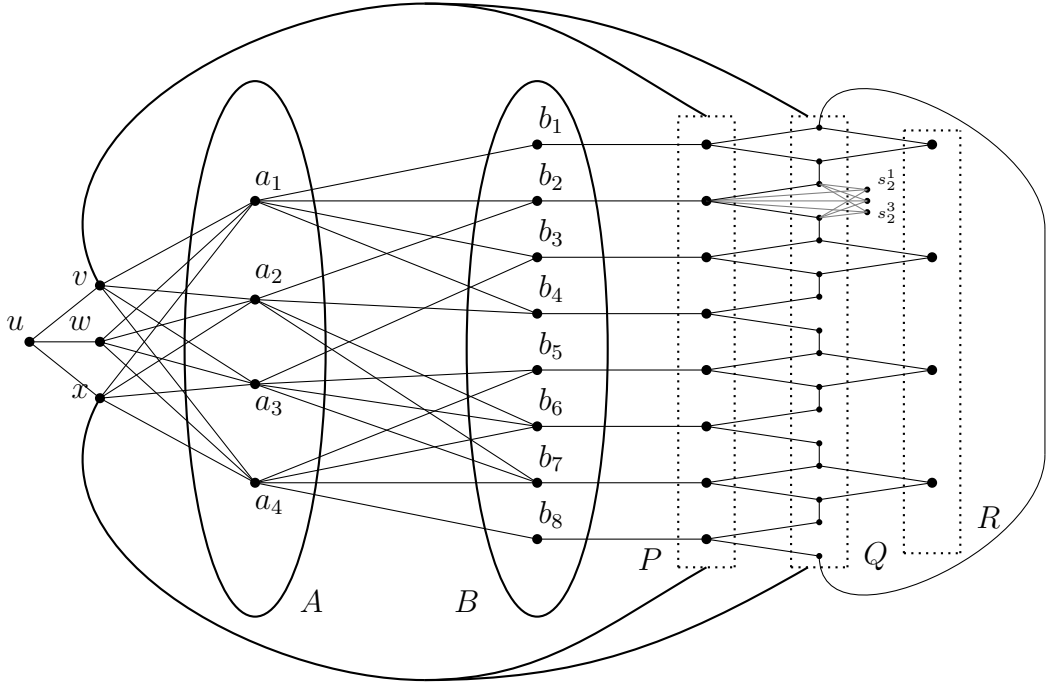 
\medskip
\noindent \textbf{Construction of $H \in \mathcal{G}$. }We consider an instance of \xfc{} (X4C) problem with $|X| = 4l$. For each element $x_i$ in the X4C problem instance, we create a vertex $b_i$ in $\mathcal{G}$. Similarly, for each set $C_j$ in the X4C problem instance, we create a vertex $a_j$ in $\mathcal{G}$. 

Let $A = \bigcup\limits_{j \in [m]} a_j$ and $B = \bigcup\limits_{i \in [4l]} b_i$. We construct a split graph where $A$ forms a clique of size $m$ and $B$ forms an independent set of size $4l$.
For every $i\in [4l]$ and $j \in [m]$, we add an edge between $b_i\in B$ and $a_j \in A$ if and only if $x_i \in C_j$ in the given X4C problem instance.


We have the following gadget adjacent to $A$. We create three vertices $v, w$ and $x$ and make each of them adjacent to every vertex in $A$. We also create a vertex $u$, which is adjacent to $v$, $w$ and $x$.

We have the following gadget adjacent to $B$. For each $i \in [4l]$, we create a vertex set $\{p_i$, $q_i^1$, $q_i^2$, $s_i^1$, $s_i^2$, $s_i^3\}$. We make $b_i$ adjacent to $p_i$. Vertex $p_i$ will be made adjacent to both $q_i^1$ and $q_i^2$. Vertices of the set $\{s_i^1, s_i^2, s_i^3\}$ are made adjacent to each vertex among $p_i$, $q_i^1$ and $q_i^2$. For $i \in [4l]$ and $i \pmod 2 \equiv 1$, we create a vertex $r_i$ and make it adjacent to $q_i^1$ and $q_i^2$. We also make $q_i^2$ adjacent to $q_{i+1}^1$ for each $i \in [4l-1]$. We make the vertices $q_{4l}^2$ and $q_1^1$ adjacent. Let $P = \bigcup\limits_{i \in [4l]} p_i$, $Q = \bigcup\limits_{i \in [4l]} \{q_i^1, q_i^2\}$, $R = \bigcup\limits_{i \in \{1,3,...,4l-1\}} r_i$ and $S = \bigcup\limits_{i \in [4l]} \{s_i^1, s_i^2, s_i^3\}$.

We also make both the vertices $v$ and $x$ adjacent to every vertex in $P \cup Q $.

This concludes the construction of graph $H \in \mathcal{G}$ based on the given X4C problem instance. See Figure~\ref{fig:fig2} for an illustration.

\medskip\noindent
\textbf{Idea.} We show that for graph class $\mathcal{G}$, there exists a Roman dominating function of weight $10l+1$ if and only if the underlying X4C problem is a YES-instance. But, from Lemma \ref{x4cproof}, we have that X4C problem is NP-complete. Hence, we obtain that RD problem on graph class $\mathcal{G}$ is NP-complete. In case of GRD problem, we show that the minimum weight of global Roman dominating function is fixed to $10l+2$ and does not depend on the underlying X4C problem instance. Hence, we have that GRD problem on graph class $\mathcal{G}$ is linear-time solvable.
\begin{lemma}~\label{weight1}
     The weight of a Roman dominating function $f$ on $\{u,v,w,x\}$ is at least $1$.
\end{lemma}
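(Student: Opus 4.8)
The plan is to exploit the fact that in the construction of $H\in\mathcal{G}$, the vertex $u$ is introduced with exactly the adjacencies $N_H(u)=\{v,w,x\}$ — it is made adjacent only to $v$, $w$ and $x$, and no later step in the construction adds further edges at $u$. Consequently, the only candidates that can Roman-dominate $u$ (when $f(u)=0$) lie inside the set $\{v,w,x\}$ itself. This observation reduces the whole claim to an elementary case analysis on the label $f(u)$.

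First I would argue the trivial case: if $f(u)\ge 1$, then $\sum_{t\in\{u,v,w,x\}}f(t)\ge f(u)\ge 1$ and we are done immediately. Next I would handle the case $f(u)=0$. Since $f$ is a Roman dominating function for $H$, the vertex $u$ must have a neighbour labelled $2$; as $N_H(u)=\{v,w,x\}$, this forces at least one of $v$, $w$, $x$ to carry label $2$. Hence $f(v)+f(w)+f(x)\ge 2$, and in particular the weight of $f$ on $\{u,v,w,x\}$ is at least $2\ge 1$. Combining the two cases yields $\sum_{t\in\{u,v,w,x\}}f(t)\ge 1$, which is exactly the assertion.

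The only genuinely load-bearing step is the structural fact $N_H(u)=\{v,w,x\}$, so I would make sure to invoke the construction explicitly to justify that $u$ has no neighbours outside $\{v,w,x\}$; everything else is a one-line consequence of the definition of a Roman dominating function. There is essentially no hard obstacle here: the lemma is a warm-up bound establishing that the $\{u,v,w,x\}$-gadget must absorb a unit of weight, and its proof is a direct two-case argument rather than a delicate estimate. I expect the lemma to serve as a building block toward the later claim that $\gamma_{gR}(H)=10l+2$ is forced independently of the X4C instance, so I would keep the statement and proof deliberately self-contained and minimal.
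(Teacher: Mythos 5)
Your proposal is correct and follows essentially the same argument as the paper: since $N_H(u)=\{v,w,x\}$, either $f(u)\ge 1$ or some vertex of $\{v,w,x\}$ receives label $2$, so the weight on $\{u,v,w,x\}$ is at least $1$. The paper's proof is this exact two-case observation stated in one line.
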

\begin{proof}
As vertex $u$ is adjacent only to $v,w$ and $x$, either $u$ should be assigned the label 1 or at least one among $v,w$ and $x$ should be assigned the label 2. Thus, the weight of $f$ on $\{u,v,w,x\}$ is at least $1$.
\end{proof}
\begin{lemma}~\label{weight8l}
    The weight of a Roman dominating function $f$ on $P \cup Q \cup S$ is at least $8l$.
\end{lemma}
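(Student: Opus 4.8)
The plan is to exploit the purely local structure of the $s$-vertices. First I would partition $P \cup Q \cup S$ into $4l$ pairwise disjoint blocks, one for each index $i \in [4l]$, namely $T_i := \{p_i, q_i^1, q_i^2, s_i^1, s_i^2, s_i^3\}$. Since every vertex of $P \cup Q \cup S$ carries a unique index, these blocks are disjoint and their union is exactly $P \cup Q \cup S$. It then suffices to prove $\sum_{y \in T_i} f(y) \ge 2$ for each $i$, because summing over the $4l$ blocks gives the weight of $f$ on $P \cup Q \cup S$ as at least $8l$.

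To establish the per-block bound I would focus on the three vertices $s_i^1, s_i^2, s_i^3$. By the construction, each of them is adjacent precisely to $p_i$, $q_i^1$ and $q_i^2$, so they are false twins whose common neighbourhood $\{p_i, q_i^1, q_i^2\}$ lies entirely inside $T_i$. The crucial observation is that the whole neighbourhood of each $s_i^j$ is contained in $T_i$; no vertex outside $T_i$ — in particular neither $v$ nor $x$, the only non-block vertices adjacent to $P \cup Q$ — is adjacent to any $s_i^j$. Consequently, an $s_i^j$ with label $0$ can be Roman-dominated only by a vertex of $\{p_i, q_i^1, q_i^2\}$ carrying label $2$, and any weight used to cover the $s$-vertices of block $i$ must itself reside in $T_i$.

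I would then split into two cases. If some vertex of $\{p_i, q_i^1, q_i^2\}$ has label $2$, then $\sum_{y \in T_i} f(y) \ge 2$ immediately. Otherwise none of $p_i, q_i^1, q_i^2$ has label $2$, so each $s_i^j$ loses its only possible dominator and must satisfy $f(s_i^j) \ge 1$, giving $\sum_{y \in T_i} f(y) \ge f(s_i^1)+f(s_i^2)+f(s_i^3) \ge 3 \ge 2$. In either case $\sum_{y \in T_i} f(y) \ge 2$, and summing over all $4l$ blocks completes the argument.

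The only real subtlety — and the step I would state most carefully — is the locality claim that the $s$-vertices of a block can be covered only from within that block, since it is precisely what makes the per-block lower bounds additive rather than overlapping (a single label-$2$ vertex cannot help cover $s$-vertices of two different blocks). Everything else is a routine two-case check. Since a global Roman dominating function is in particular a Roman dominating function for $G$, the same bound will carry over verbatim to the GRD setting invoked later in this subsection.
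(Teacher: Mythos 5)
Your proposal is correct and follows essentially the same route as the paper: both decompose $P\cup Q\cup S$ into the $4l$ disjoint blocks $\{p_i,q_i^1,q_i^2,s_i^1,s_i^2,s_i^3\}$ and derive a per-block weight of at least $2$ from the dichotomy that either some vertex of $\{p_i,q_i^1,q_i^2\}$ has label $2$ or all three $s_i^j$ must have label at least $1$. Your explicit justification of the locality claim (that $N(s_i^j)\subseteq\{p_i,q_i^1,q_i^2\}$, so the per-block bounds are genuinely additive) is a point the paper leaves implicit, but the argument is the same.
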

\begin{proof}
    For each $i \in [4l]$: vertices $p_i, q_i^1$ and $q_i^2$  are adjacent to three distinct vertices $s_i^1, s_i^2$ and $s_i^3$. Consider the set $\{p_i, q_i^1, q_i^2, s_i^1, s_i^2, s_i^3\}$. We either label at least one vertex among $p_i, q_i^1$ and $q_i^2$ with 2 or each vertex among $\{s_i^1, s_i^2, s_i^3\}$ with at least 1. Thus, the weight of $\{p_i, q_i^1, q_i^2, s_i^1, s_i^2, s_i^3\}$ is at least two. Hence, the weight of $P \cup Q \cup S$ in $f$ is at least $8l$. 
\end{proof}

\begin{lemma}~\label{weight2b}
     The weight of a Roman dominating function $f$ on $A \cup B \cup P$ is at least $2l$.
\end{lemma}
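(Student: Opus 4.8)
The plan is to show that the vertices of $B$ can only be Roman-dominated using weight placed inside $A \cup B \cup P$, and then to count how cheaply this can be accomplished. First I would record the crucial adjacency fact from the construction: since $B$ is an independent set and the only neighbours of $b_i$ are $p_i$ together with those $a_j$ satisfying $x_i \in C_j$, every neighbour of $b_i$ lies in $A \cup B \cup P$. In particular, neither $v, w, x, u$ nor any vertex of the gadget attached to $B$ (the $q$'s, $r$'s, or $s$'s) is adjacent to $b_i$. Consequently, for each $i \in [4l]$ with $f(b_i) = 0$, the Roman condition forces either $f(p_i) = 2$ or $f(a_j) = 2$ for some $a_j$ with $x_i \in C_j$.

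Next I would set $a = |A_2|$, where $A_2 = \{a_j \in A : f(a_j) = 2\}$, and exploit the defining property of the X4C instance that each $C_j$ has exactly four elements. A single $a_j \in A_2$ is therefore adjacent to at most four vertices of $B$, so the set of vertices of $B$ that are dominated by some member of $A_2$ has size at most $4a$. Hence at least $4l - 4a$ of the $b_i$ are not dominated by $A_2$, and by the previous paragraph each such $b_i$ must satisfy $f(b_i) \geq 1$ or $f(p_i) = 2$, contributing weight at least $1$ on the distinct vertex $b_i \in B$ or $p_i \in P$.

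Collecting these contributions, which are pairwise disjoint across the members of $A_2$, the undominated $b_i$'s, and their corresponding $p_i$'s, yields
\[
f(A) + f(B) + f(P) \;\geq\; 2a + \max\{0,\; 4l - 4a\}.
\]
I would then finish by minimising the right-hand side over $a \geq 0$: for $a \leq l$ it equals $4l - 2a \geq 2l$, while for $a \geq l$ it equals $2a \geq 2l$. In every case the weight of $f$ on $A \cup B \cup P$ is at least $2l$, as claimed.

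I expect the only delicate point to be ensuring that the weight contributions are never double-counted, that is, that the weight $2a$ coming from $A_2$, the label-$1$ charges on undominated $b_i$, and the label-$2$ charges on the corresponding $p_i$ all sit on distinct vertices. This holds because each index $i$ owns its own $b_i \in B$ and $p_i \in P$, and the sets $A$, $B$, $P$ are pairwise disjoint. The bound $|C_j| = 4$ inherited from the \xfc{} instance is exactly what sets the ``four elements dominated per selected set'' efficiency that makes $2l$ the correct threshold.
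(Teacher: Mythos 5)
Your proof is correct and follows essentially the same idea as the paper's: each label-$2$ vertex of $A$ covers at most four vertices of $B$ at cost two, so covering all $4l$ vertices of $B$ costs at least $2l$ within $A \cup B \cup P$. Your version is in fact tighter than the paper's, since the explicit minimisation of $2a + \max\{0, 4l-4a\}$ over $a$ cleanly handles mixed labelings (some $b_i$ dominated from $A$, others charged to $B \cup P$), which the paper only addresses informally.
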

\begin{proof}
    In order to cover the vertices of $B$, either $l$ vertices from $A$ can be labeled 2 or a subset of vertices from $B \cup P$ can have non-zero labels. A function that assigns the label 2 to $l$ vertices in $A$ has a weight of $2l$ on $A \cup B \cup P$. Whereas, a function that assigns non-zero labels to a subset of vertices from $B \cup P$ leads to a larger weight than $2l$. This is because a vertex from $A$ covers four vertices of $B$, which costs a weight of two. But, if these four vertices of $B$ were to have the label 1 or to be covered by their neighbours in $P$, the weight needed is four. Hence, we obtain that the weight of $f$ on $A \cup B \cup P$ is at least $2l$.
\end{proof}
\begin{lemma}~\label{sameindex}
    Consider a Roman dominating function $f$ such that for a vertex $y \in \bigcup\limits_{i \in [4l]} q_i^1$, $f(y) = 2$. The function $f$ admits a minimum weight if and only if for each vertex $z \in \bigcup\limits_{i \in [4l]} q_i^1$, $f(z) = 2.$
\end{lemma}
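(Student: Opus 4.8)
The plan is to read the vertices $q_1^1,q_1^2,q_2^1,\dots,q_{4l}^1,q_{4l}^2$ as a single cycle (the intra-block links being carried by each $p_i$ and the inter-block links being the direct edges $q_i^2q_{i+1}^1$, cyclically closed by $q_{4l}^2q_1^1$) and to show that a label $2$ on any $q_i^1$ forces the same label on its cyclic successor. I would first fix the local structure of a minimum-weight function $f$. By Lemma~\ref{weight8l} each block $\{p_i,q_i^1,q_i^2,s_i^1,s_i^2,s_i^3\}$ carries weight at least $2$; since $s_i^1,s_i^2,s_i^3$ have open neighbourhood exactly $\{p_i,q_i^1,q_i^2\}$, in a minimum-weight $f$ this weight is realized by giving label $2$ to exactly one of $p_i,q_i^1,q_i^2$ and label $0$ to the rest of the block. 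I would also record that in a minimum-weight $f$ neither $v$ nor $x$ gets label $2$: such a label adds weight $2$ but only covers $A\cup P\cup Q$, which is already covered by the per-block $2$'s together with the cover of $B$ (Lemma~\ref{weight2b}), so it could be removed, contradicting minimality. This last fact is essential, since $v$ and $x$ are the only vertices outside the blocks adjacent to the $q$-vertices.

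For the forward direction, suppose $f$ is of minimum weight and $f(q_{i_0}^1)=2$ for some $i_0$. Then $q_{i_0}^1$ is the unique block-$2$ at index $i_0$, so $f(q_{i_0}^2)=0$. The crucial point is that $q_i^1$ and $q_i^2$ are \emph{not} adjacent, so the block-$2$ at $q_{i_0}^1$ does not cover $q_{i_0}^2$; the remaining neighbours of $q_{i_0}^2$ are $p_{i_0}$, the $s_{i_0}^j$, $r_{i_0}$ (when $i_0$ is odd), and $v,x$, all of label $0$, together with its cyclic successor $q_{i_0+1}^1$. Feasibility therefore forces $f(q_{i_0+1}^1)=2$. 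Iterating this step around the cycle, and closing up through the wrap-around edge $q_{4l}^2q_1^1$, yields $f(q_i^1)=2$ for every $i\in[4l]$.

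For the converse I would verify that setting $f(q_i^1)=2$ for all $i$ and $0$ on the remaining block vertices is feasible with block weight exactly $8l$: each $q_i^1$ covers $p_i$, $s_i^1,s_i^2,s_i^3$ and, when present, $r_i$, while each $q_i^2$ is covered by its cyclic successor $q_{i+1}^1$; this attains the lower bound of Lemma~\ref{weight8l}, so together with the optimal choices on the remaining vertices the function is of minimum weight. I expect the main obstacle to be the cyclic bookkeeping of the forward step: because $q_i^1$ and $q_i^2$ are non-adjacent, a block-$2$ on $q_i^1$ never covers its partner $q_i^2$, and it is exactly this feature that propagates the $q^1$-labels all-or-nothing around the cycle; care is needed both to ensure the argument closes across the wrap-around edge and to rule out any alternative coverage of the $q_i^2$'s by $v$ or $x$, which is what the no-$v,x$ step secures.
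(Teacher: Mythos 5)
Your central mechanism --- a label $2$ on $q_{i_0}^1$ forces $f(q_{i_0}^2)=0$, whose only remaining candidate for a label-$2$ neighbour is its cyclic successor $q_{i_0+1}^1$, so the labels propagate around the cycle and close through the edge $q_{4l}^2q_1^1$ --- is the same observation the paper's proof relies on (phrased there as ``$q_{i-1}^2$ remains uncovered'' once $f(q_i^1)\le 1$). The problem is that your forward direction rests on two structural claims that you assert rather than prove, and they are exactly where the difficulty of the lemma sits. First, you claim that in a minimum-weight $f$ every block $\{p_i,q_i^1,q_i^2,s_i^1,s_i^2,s_i^3\}$ has weight exactly $2$, realized as a single label $2$ on one of $p_i,q_i^1,q_i^2$ and $0$ elsewhere. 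Lemma~\ref{weight8l} gives only a per-block \emph{lower bound} of $2$; it does not exclude a globally minimum $f$ in which some block carries weight $3$, e.g.\ $f(q_i^1)=2$ and $f(q_i^2)=1$. That is precisely the configuration in which your propagation halts (the $1$ on $q_i^2$ removes the need for $f(q_{i+1}^1)=2$), and ruling it out requires a global comparison between such mixed configurations and the two pure ones (all $q_i^1$ labelled $2$, which forces $B$ to be covered by $A$ at cost $2l$, versus all $p_i$ labelled $2$, which covers $B$ for free but pays $2l$ on $R$). Asserting block-cleanliness from the per-block bound therefore makes the argument circular: the cleanliness claim is essentially the lemma itself. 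Second, your reason for excluding $f(v)=2$ and $f(x)=2$ --- that ``$A\cup P\cup Q$ is already covered by the per-block $2$'s'' --- is not accurate: a block $2$ sitting on $q_i^1$ does \emph{not} cover $q_i^2$, so labelling $v$ or $x$ with $2$ is a genuine alternative way to cover all of $Q$ at once and must be excluded by explicit accounting (it costs $2$ while still leaving the per-block $2$'s, the $r_i$'s and $w$ to pay for). The same remark applies to the possibility $f(r_i)=2$.

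The paper discharges these points, informally, through its Cases 1--3, which weigh the extra cost incurred by a set of four ``deviating'' indices against the weight of $2$ saved by dropping one vertex of $A$; some comparison of this kind has to precede your propagation step before it can start. Your backward direction has a milder version of the same issue: attaining the $8l$ lower bound on $P\cup Q\cup S$ does not by itself certify global minimality, because the all-$q^1$ choice zeroes out $P$ and thereby forces $B$ to be covered entirely from $A$, whose cost depends on the underlying X4C instance.
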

\begin{proof}
    From Lemma \ref{weight2b}, we have that a minimum weighted Roman dominating function $f$ assigns the label 2 to the vertices in $\bigcup\limits_{C_j \in C'} a_j$, where $C'$ is an exact cover of the X4C problem instance. 
    
    Let $y \in \bigcup\limits_{i \in [4l]} q_i^1$ such that $f(y) = 2$. For the sake of contradiction, we assume that $f(q_i^1) \leq 1$ for some $i \in [4l]$. In order to cover the vertices in $\{p_i,q_i^2, s_i^1, s_i^2, s_i^3\}$, we must label $p_i$ or $q_i^2$ with 2, but $q_{i-1}^2$ remains uncovered, thereby increasing the weight of $f$ by at least one. We have the following three cases involving the four vertices in $\bigcup\limits_{i \in [4l]} q_i^1$ with the label smaller than 2. The reason for considering four vertices from $\bigcup\limits_{i \in [4l]} q_i^1$ is because we are working on the X4C problem instance and the four vertices in $\bigcup\limits_{i \in [4l]} b_i^1$ can be covered by exactly one vertex from $\bigcup\limits_{j \in [m]} a_j$.
    
    \medskip\noindent
    \textbf{Case 1: }We consider four vertices $q_{t+1}^1,q_{t+2}^1,q_{t+3}^1$ and $q_{t+4}^1$ with consecutive indices such that $b_{t+1}$, $b_{t+2}$, $b_{t+3}$ and $b_{t+4}$ can be covered by a vertex in $A$ and label them with 0. For each vertex $q_i^1$ with the label 0, one among $q_i^2$ and $p_i$ must be assigned the label 2. Thus, we have the following subcases.

    \medskip\noindent
    \textbf{Case 1a:} We label the vertices $p_{t+1},p_{t+2},p_{t+3}$ and $p_{t+4}$ with 2. As there exists a vertex from $A$ (say $a_j$) that covers $b_{t+1},b_{t+2},b_{t+3}$ and $b_{t+4}$, we can label $a_j$ with 0. As the vertices of $R$ need to be covered, we label two vertices of $R$ with 1. This takes up a weight of two that we gained from $a_j$. But, we still need an additional weight of one to cover $q_{t-1}^2$. Hence, this leads to a larger weight.

    \medskip\noindent
    \textbf{Case 1b:} We label the vertices $q_{t+1}^2,q_{t+2}^2,q_{t+3}^2$ and $q_{t+4}^2$ with 2. We still need an additional weight of one for $q_{t}^2$. Hence, this leads to a larger weight.

    \medskip\noindent
    \textbf{Case 1c:} We label a subset of vertices from $q_{t+1}^2,q_{t+2}^2,q_{t+3}^2$ and $q_{t+4}^2$ with 2 and a subset of vertices from $p_{t+1},p_{t+2},p_{t+3}$ and $p_{t+4}$ with 2. It is easy to see that the weight would be at least as large as one of the first two cases.    

    \medskip\noindent
    \textbf{Case 2: }We consider four vertices $q_{t_w}^1$, $q_{t_x}^1$, $q_{t_y}^1$ and $q_{t_z}^1$ such that no two indices among $\{t_w,t_x, t_y, t_z\}$ are consecutive and $b_{t_w}$, $b_{t_x}$, $b_{t_y}$ and $b_{t_z}$ can be covered by a vertex in $A$. If we label all four vertices $q_{t_w}^1$, $q_{t_x}^1$, $q_{t_y}^1$ and $q_{t_z}^1$ with 0, it is easy to see that the weight is strictly large. 

    \medskip\noindent
    \textbf{Case 3: }We consider four vertices $q_{t_w}^1$, $q_{t_x}^1$, $q_{t_y}^1$ and $q_{t_z}^1$ such that a subset of indices among $\{t_w,t_x, t_y, t_z\}$ are consecutive and $b_{t_w}$, $b_{t_x}$, $b_{t_y}$ and $b_{t_z}$ can be covered by a vertex in $A$. If we label all four vertices $q_{t_w}^1$, $q_{t_x}^1$, $q_{t_y}^1$ and $q_{t_z}^1$ with 0, it is easy to see that the weight is strictly large.

    As the weight grows in case of exactly four vertices in $\bigcup\limits_{i \in [4l]} q_i^1$ with a smaller label than 2, the weight also grows for a higher number of vertices of such sort. Therefore, the function $f$ admits a minimum weight if and only if for each vertex $z \in \bigcup\limits_{i \in [4l]} q_i^1$, $f(z) = 2.$
\end{proof}
Lemma \ref{sameindex} also holds for the vertices in the set $\bigcup\limits_{i \in [4l]} q_i^2$. 
\begin{lemma}~\label{onlyone}
    Let $f$ be a minimum weighted Roman dominating function. For each $i \in [4l]$, at most one vertex among $q_i^1$ and $q_i^2$ is assigned the label 2 while the other is assigned the label 0 in $f$.
\end{lemma}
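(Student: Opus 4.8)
The plan is to fix an index $i$ and rule out every labeling of the pair $(q_i^1,q_i^2)$ other than $(2,0)$, $(0,2)$ and $(0,0)$, in each forbidden case exhibiting a Roman dominating function of strictly smaller weight and thereby contradicting the minimality of $f$. Throughout I would exploit two features of the construction: first, that $q_i^1$ and $q_i^2$ are \emph{not} adjacent, so a label $2$ on one of them never covers the other; and second, that $v$ and $x$ are adjacent to all of $P\cup Q$, while the vertices of $S$ (resp.\ $R$) can only be dominated from inside $\{p_i,q_i^1,q_i^2\}$ (resp.\ $\{q_i^1,q_i^2\}$). Lemma~\ref{sameindex} (and its stated analogue for $\bigcup_i q_i^2$) does the heavy lifting, since it converts a single label $2$ on a $q^1$-vertex into the global pattern ``all $q_j^1=2$'', which guarantees that the cyclic neighbour of a $0$-labeled $q$-vertex carries label $2$.

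First I would rule out $f(q_i^1)=f(q_i^2)=2$. Applying Lemma~\ref{sameindex} to the $q^1$-family and its analogue to the $q^2$-family forces $f(q_j^1)=f(q_j^2)=2$ for every $j$, i.e.\ all $8l$ vertices of $Q$ carry label $2$. I then reset every $q_j^2$ to $0$: each such vertex is now covered by its cyclic neighbour $q_{j+1}^1$ (indices mod $4l$, with $q_{4l}^2$ paired to $q_1^1$), whereas $p_j$, the $s_j$'s, the $r_j$'s, and $v,x$ (if $0$-labeled) remain covered by $q_j^1=2$, and no vertex ever had $q_j^2$ as its unique label-$2$ neighbour. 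This yields a valid Roman dominating function of weight $f(V)-8l$, contradicting minimality, so at most one of $q_i^1,q_i^2$ is labeled $2$.

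Next I would show the non-$2$ partner carries label $0$ rather than $1$. If $f(q_i^1)=2$, then Lemma~\ref{sameindex} gives $f(q_j^1)=2$ for all $j$; were $f(q_i^2)=1$, its cyclic neighbour $q_{i+1}^1$ already has label $2$, so resetting $f(q_i^2)=0$ strictly decreases the weight (the case $f(q_i^2)=2$ was excluded above), and the symmetric argument handles $f(q_i^2)=2$. Finally, suppose neither twin equals $2$ and, say, $f(q_i^1)=1$; by Lemma~\ref{sameindex} no $q^1$-vertex can be labeled $2$. Since the $S$-vertices of block $i$ must be dominated from inside $\{p_i,q_i^1,q_i^2\}$, either $f(p_i)=2$, in which case $p_i$ already covers $q_i^1$ and I may reset $f(q_i^1)=0$; or $s_i^1,s_i^2,s_i^3$ are each labeled at least $1$, giving block $i$ weight at least $4$, in which case I re-label the whole block by $f(p_i)=2$, $f(q_i^1)=f(q_i^2)=0$, $f(s_i^j)=0$ for all $j$, adding $f(r_i)=1$ only when $i$ is odd. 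This covers the block (including $b_i$) at weight at most $3$ without disturbing any vertex outside the block, so again the weight drops. Hence $f(q_i^1),f(q_i^2)\in\{0,2\}$ and not both equal $2$.

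The main obstacle is precisely the non-adjacency of $q_i^1$ and $q_i^2$: because a label $2$ on one twin does not cover the other, the coverage of a $0$-labeled $q$-vertex must be supplied by its cyclic neighbour, and guaranteeing that this neighbour is labeled $2$ is exactly what forces me to invoke the all-or-nothing structure of Lemma~\ref{sameindex} rather than argue purely locally. The secondary nuisance is the sub-case in which the $s_i$-vertices are dominated individually; there the local re-labeling must also restore coverage of $r_i$ for odd $i$, and one must verify that this exchange never uncovers a vertex in a neighbouring block.
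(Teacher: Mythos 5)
Your proof is correct and rests on the same two pillars as the paper's own argument: the common neighbourhood of $q_i^1$ and $q_i^2$ in $P\cup S\cup R\cup\{v,x\}$ makes a second label $2$ redundant, and Lemma~\ref{sameindex} guarantees that the cyclic $Q$-neighbour covers the $0$-labelled twin. You simply carry out the exchange arguments explicitly (including the label-$1$ subcases and the subcase where neither twin is labelled $2$), where the paper compresses them into a single short paragraph.
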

\begin{proof}
    As $q_i^1$ and $q_i^2$ have the same neighbourhood in $P \cup S \cup R$, at most one of them can be labeled 2 while the other should be assigned the label 0. This is true because all the neighbours of $q_i^1$ (or $q_i^2$) are covered by the vertex that is assigned the label 2 among $q_i^1$ and $q_i^2$. The vertex that is assigned the label 0 among $q_i^1$ and $q_i^2$ will be covered by its neighbour in $Q$. Therefore, at most one vertex among $q_i^1$ and $q_i^2$ is assigned the label 2 and the other is assigned the label 0 in $f$.
\end{proof}
For some fixed $i \in [4l]$, if one among $q_i^1$ and $q_i^2$ are to be labeled with 2, for the rest of this section, we assume that $q_i^1$ will be the one to be assigned the label 2 and $q_i^2$ is assigned the label 0.

If there exists a vertex $y \in Q$, such that $f(y) =2$, then from Lemmas \ref{sameindex} and \ref{onlyone}, we obtain that for each vertex $z \in \bigcup\limits_{i \in [4l]} {q_i^1}$, $f(z) = 2$.
\begin{lemma}~\label{nopvertex}
    There exists a minimum weighted Roman dominating function $f$ such that $f(p_i) = 0$ for each $i \in [4l]$.
\end{lemma}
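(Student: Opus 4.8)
The plan is to run an exchange argument on a fixed minimum-weight Roman dominating function $f$, showing that whenever some $f(p_i)>0$ we can redistribute labels onto the $q$-vertices without increasing the weight. First I would pin down which values $f(p_i)$ can take in an optimal $f$. By Lemma~\ref{weight8l} each gadget $\{p_i,q_i^1,q_i^2,s_i^1,s_i^2,s_i^3\}$ carries weight at least $2$, and since $s_i^1,s_i^2,s_i^3$ have their whole neighbourhood inside $\{p_i,q_i^1,q_i^2\}$, a minimum $f$ realises this bound by placing a single label $2$ on one of $p_i,q_i^1,q_i^2$ (labelling the three $s_i^j$ directly already costs $3$). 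This rules out $f(p_i)=1$: if $p_i$ were labelled $1$, covering $s_i^1,s_i^2,s_i^3$ would still force a label $2$ on $q_i^1$ or $q_i^2$ (which then also covers $p_i$, so the $1$ is deletable and $f$ was not minimum), or else force all three $s_i^j$ to be positive (strictly heavier than the alternative $q_i^1=2$). Hence $f(p_i)\in\{0,2\}$.

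Next I would turn the statement into a clean dichotomy using Lemmas~\ref{onlyone} and~\ref{sameindex}. Suppose $f(p_i)=2$ for some $i$. Then the gadget weight at $i$ is already spent on $p_i$, so $q_i^1=q_i^2=0$; in particular no $q$-vertex at index $i$ is labelled $2$. But a single $q$-vertex labelled $2$ would, by Lemmas~\ref{onlyone} and~\ref{sameindex}, propagate a label $2$ to every $q_j^1$ and thus contradict $q_i^1=0$. Consequently no $q$-vertex is labelled $2$ at all, and each gadget must be covered through its own $p$-vertex, i.e. $f(p_j)=2$ for all $j\in[4l]$. Thus either $f(p_i)=0$ for every $i$ (and we are done), or $f$ is the all-$p$ configuration.

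To finish I would show the all-$p$ configuration is never strictly optimal, by exhibiting a no-heavier function with every $p_i=0$. Replace it by the function $f'$ given by $f'(q_i^1)=2$ and $f'(q_i^2)=f'(p_i)=f'(s_i^j)=0$ for all $i$, leaving $f'$ unchanged elsewhere; the weight on $P\cup Q\cup S$ stays exactly $8l$. Around the cycle $q_i^2\sim q_{i+1}^1$ each $q_i^2$ is now covered by $q_{i+1}^1$, each $q_i^1$ covers its $s_i^j$, and — crucially — each $q_i^1$ covers $r_i$, whereas in the all-$p$ configuration all $2l$ vertices $r_i$ (odd $i$) had both neighbours $q_i^1,q_i^2$ labelled $0$ and so each demanded a private label of weight at least $1$, a total of $2l$ on $R$. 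The only vertices that lose a dominator in the switch are the $b_i$, which were covered by $p_i$; by Lemma~\ref{weight2b} the set $B$ can be covered through $A$, and I would charge the cost of this $A$-cover against the $2l$ released on $R$.

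The main obstacle is exactly this last piece of bookkeeping: proving that re-covering $B$ through $A$ after deleting the $p_i$ never costs more than the weight freed on $R$ (and the auxiliary vertices), so that $f'$ is genuinely no heavier than $f$. This is where Lemma~\ref{weight2b} carries the load, since it asserts precisely that covering $B$ via $A$ is the cheapest option; the remaining checks — that $v$ and $x$ remain covered by any $q_i^1=2$, that $u$, $w$ and the clique $A$ remain covered by the $A$-labels used for $B$, and that the weight on $\{u,v,w,x\}$ guaranteed by Lemma~\ref{weight1} is untouched — are routine.
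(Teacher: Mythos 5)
Your route differs structurally from the paper's: the paper runs a per-index local exchange (for each $i$ with $f'(p_i)=2$ it moves the label $2$ from $p_i$ to $q_i^1$, zeroes $r_i$ for odd $i$, and pays for an $a_j$ to re-cover $b_i$; the case $f'(p_i)=1$ is discarded as suboptimal), whereas you first rule out the label $1$, then force a dichotomy between the all-zero and all-$p$ configurations via Lemmas~\ref{sameindex} and~\ref{onlyone}, and finally convert the all-$p$ configuration in one bulk step. The first two stages are reasonable in spirit, though your claim that $f(p_i)=2$ forces $q_i^1=q_i^2=0$ tacitly assumes each gadget carries weight exactly $2$ in an optimum, which is only guaranteed when all of the global lower bounds are simultaneously tight (essentially the YES case); a local argument that $f(p_i)=f(q_i^1)=2$ is wasteful is needed there instead.

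The genuine gap is in the final bookkeeping step, which you have correctly located but misjudged: Lemma~\ref{weight2b} is a \emph{lower} bound (any Roman dominating function spends at least $2l$ on $A\cup B\cup P$) and gives no \emph{upper} bound on the cost of covering $B$ through $A$ once every $p_i$ is set to $0$. In the all-$p$ configuration each $b_i$ is covered by $p_i$ for free; after your switch each $b_i$ must either receive label $1$ or acquire a neighbour $a_j$ with label $2$, and when $C$ covers $X$ poorly this can cost far more than the $2l$ you release on $R$. Concretely, take $|X|=4l$ and $C=\{C_1\}$ a single set: the all-$p$ configuration has weight $10l+3$ (weight $8l$ on $P$, $2l$ on $R$, $2$ on one $a_j$ or $v$, $1$ on $u$ or $w$), while any function with all $p_i=0$ must spend at least $8l$ on $Q\cup S$, at least $4l-2$ on $A\cup B$, and at least $1$ on $\{u,v,w,x\}$, i.e.\ at least $12l-1$, which exceeds $10l+3$ for $l\ge 3$. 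So your exchange strictly increases the weight exactly at the step you flagged, and citing Lemma~\ref{weight2b} cannot close it. (For what it is worth, the paper's own per-index exchange pays $2$ for an $a_j$ to re-cover a single $b_i$ against a saving of only $1$ on $r_i$, so it faces the same arithmetic; the difficulty you hit is intrinsic to the statement, not an artifact of your decomposition.)
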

\begin{proof}
Let $f'$ be a minimum weighted Roman dominating function with $f'(p_i) > 0$ for some $i \in [4l]$. We show that this can be adjusted to another minimum weighted Roman dominating function $f$ with $f(p_i) = 0$, thereby proving that there exists a minimum weighted Roman dominating function with $f(p_i) = 0$ for all $i \in [4l]$. We have the following cases based on the value of $f'(p_i)$.

\medskip\noindent
\textbf{Case 1:} $f'(p_i) = 2$.

In this case, $p_i$ covers $\{b_i, q_i^1, q_i^2, s_i^1, s_i^2, s_i^3\}$. We modify $f'(p_i)$ from 2 to 0 in $f$ and $f'(q_i^1)$ from 0 to 2 in $f$. If $i$ is odd, we modify $f'(r_i)$ from 1 to 0 in $f$ and $f'(a_j)$ from 0 to 2 in $f$. $f'$ remains unchanged for all other vertices.

We have that $f$ is a Roman dominating function of weight $f(V) = f'(V)$ and has $f(p_i) = 0$.

\medskip\noindent
\textbf{Case 2:} $f'(p_i) = 1$.

In this case, we can simply set $f(p_i) = 0$, keeping all other labels unchanged. Since $p_i$ remains covered and no uncovered vertices are created, $f'$ remains valid, and 
$f(V) = f'(V) - 1 < f'(V)$,
contradicting the minimality of $f'$. Therefore, assigning $f'(p_i) = 1$ is suboptimal.

In both cases, there exists a minimum weighted Roman dominating function $f$ with $f(p_i) = 0$. Repeating this transformation for every $i \in [4l]$ with $f'(p_i) \neq 0$ yields the desired conclusion.

\end{proof}
\begin{lemma}~\label{weight10l}
    The weight of a Roman dominating function $f$ on $H \in \mathcal{G}$ is at least $10l+1$.
\end{lemma}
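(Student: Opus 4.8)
The plan is to derive the bound by splitting $V(H)$ into the pairwise disjoint blocks $\{u,v,w,x\}$, $A\cup B$, and $Q\cup S$, together with the leftover vertices $P\cup R$, and then adding the per-block lower bounds already established. Since $\gamma_R(H)$ is attained by a minimum weighted Roman dominating function, it is enough to prove that any such minimum weighted function has weight at least $10l+1$: the bound for an arbitrary Roman dominating function follows immediately, as its weight is at least $\gamma_R(H)$.

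First I would apply Lemma~\ref{nopvertex} to choose a minimum weighted Roman dominating function $f$ with $f(p_i)=0$ for every $i\in[4l]$, so the weight of $f$ on $P$ vanishes. This is the crucial move. Both Lemma~\ref{weight8l} and Lemma~\ref{weight2b} bound the weight on sets that contain $P$ (namely $P\cup Q\cup S$ and $A\cup B\cup P$), so adding them naively would count the weight on $P$ twice; forcing $f(P)=0$ eliminates this overlap and lets the two bounds be combined additively. This overlap at $P$ is the only genuine obstacle in the argument.

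With $f(P)=0$, Lemma~\ref{weight8l} yields $f(Q)+f(S)=f(P\cup Q\cup S)\ge 8l$, Lemma~\ref{weight2b} yields $f(A)+f(B)=f(A\cup B\cup P)\ge 2l$, and Lemma~\ref{weight1} yields $f(\{u,v,w,x\})\ge 1$. These three bounds concern pairwise disjoint vertex sets, and the labels on the remaining vertices of $P\cup R$ are nonnegative, so
\[
f(V)\ \ge\ f(\{u,v,w,x\})+\bigl(f(A)+f(B)\bigr)+\bigl(f(Q)+f(S)\bigr)\ \ge\ 1+2l+8l\ =\ 10l+1.
\]
Therefore $\gamma_R(H)\ge 10l+1$, and consequently every Roman dominating function on $H$ has weight at least $10l+1$.

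Beyond the $P$-overlap, the argument is purely additive bookkeeping. The one point I would verify carefully is that $\{u,v,w,x\}$, $A$, $B$, $P$, $Q$, $R$, and $S$ genuinely partition $V(H)$, so that summing the block bounds neither double-counts a vertex nor omits one; this holds directly from the construction of $H$.
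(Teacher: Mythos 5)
Your proof is correct and follows essentially the same route as the paper: invoke Lemma~\ref{nopvertex} to fix a minimum weighted Roman dominating function with $f(P)=0$, then add the bounds of Lemmas~\ref{weight1}, \ref{weight8l} and \ref{weight2b}, which become bounds on pairwise disjoint sets once the overlap at $P$ carries zero weight. Your write-up is in fact more explicit than the paper's one-line justification about why the three lemmas can be summed without double-counting.
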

\begin{proof}
From Lemma \ref{nopvertex}, we have that there exists a minimum weighted $f$ such that $f(p_i) = 0$ for each $i \in [4l]$. By reconsidering Lemmas \ref{weight1}, \ref{weight8l} and \ref{weight2b} with $p_i = 0$ for each $i \in [4l]$, we obtain that the weight of $f$ on $H \in \mathcal{G}$ is at least $10l+1$. 
\end{proof}
\begin{lemma}~\label{g1}
    For a graph $H \in \mathcal{G}$, X4C problem is a YES-instance if and only if there exists a Roman dominating function of weight exactly $10l+1$.
\end{lemma}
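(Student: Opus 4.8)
The plan is to prove the two directions separately, using the lower bound of Lemma~\ref{weight10l} together with the normalization of Lemma~\ref{nopvertex}.

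For the forward direction, I would start from an exact $4$-cover $C' \subseteq C$ with $|C'| = l$ and build an explicit Roman dominating function $f$ of weight exactly $10l+1$: set $f(a_j) = 2$ for every $C_j \in C'$, set $f(q_i^1) = 2$ for all $i \in [4l]$, set $f(u) = 1$, and set $f$ to $0$ everywhere else. The weight is $2l + 8l + 1 = 10l+1$ by construction, so the only thing to check is that $f$ is a Roman dominating function. The chosen clique vertices $a_j$ cover all of $B$ (each $b_i$ lies in exactly one selected set), and also cover $v$, $w$, $x$ and the remaining vertices of $A$ through clique adjacency; each $q_i^1$ with label $2$ covers $p_i$, the three vertices $s_i^1, s_i^2, s_i^3$, the companion $q_i^2$ (via the cycle edge $q_i^2 q_{i+1}^1$, with $q_{4l}^2 q_1^1$ closing the cycle), and $r_i$ when $i$ is odd; finally $u$ is covered by its own label. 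Hence every label-$0$ vertex has a label-$2$ neighbour, and this direction is complete.

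For the reverse direction, suppose $f$ is a Roman dominating function of weight exactly $10l+1$. By Lemma~\ref{weight10l} this equals the minimum weight, so $f$ is minimum weighted, and by Lemma~\ref{nopvertex} I may assume $f(p_i) = 0$ for all $i \in [4l]$. I would then partition $V(H)$ into the disjoint sets $\{u,v,w,x\}$, $A \cup B$, $P$, $Q \cup S$ and $R$, and add the bounds of Lemmas~\ref{weight1}, \ref{weight2b} and \ref{weight8l}: since $f$ vanishes on $P$, the weight on $\{u,v,w,x\}$ is at least $1$, the weight on $A \cup B$ is at least $2l$, and the weight on $Q \cup S$ is at least $8l$, while $R$ contributes at least $0$. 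These sum to $10l+1$, so the assumed equality forces every bound to be tight; in particular the weight of $f$ on $A \cup B$ is exactly $2l$.

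The crux is to convert this tightness into an exact $4$-cover. Let $k$ denote the number of vertices of $A$ carrying label $2$. Since $f(p_i)=0$, every $b_i$ with label $0$ must be dominated by some $a_j$ with $f(a_j)=2$, and as each $a_j$ is adjacent to exactly the four $b_i$ with $x_i \in C_j$, these $k$ vertices dominate at most $4k$ elements of $B$; the remaining at least $4l-4k$ vertices of $B$ must themselves carry label at least $1$. Thus the weight on $A \cup B$ is at least $2k + \max\{0,\,4l-4k\}$, which exceeds $2l$ whenever $k < l$; equality at $2l$ therefore forces $k = l$, $f(B)=0$, and all of $B$ dominated by the $l$ label-$2$ clique vertices. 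Because each such vertex dominates exactly four elements of $B$ and $4l$ elements must be dominated in total, the corresponding sets $C_j$ are pairwise disjoint and cover $X$, i.e.\ they form an exact $4$-cover. I expect this counting comparison --- ruling out any mixed distribution of weight across $A$ and $B$ that reaches $2l$ without giving a genuine exact cover --- to be the main obstacle, since it is where the combinatorics of the X4C instance is actually extracted.
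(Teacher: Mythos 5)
Your forward direction is exactly the paper's construction, but your reverse direction takes a genuinely different route. The paper invokes Lemma~\ref{sameindex} to obtain a dichotomy --- either no vertex of $Q$ carries label $2$, or every $q_i^1$ does --- then kills the first case by a weight count and extracts the cover in the second. You instead normalize via Lemma~\ref{nopvertex} so that $f$ vanishes on $P$, note that the lower bounds of Lemmas~\ref{weight1}, \ref{weight2b} and \ref{weight8l} then apply to the \emph{disjoint} pieces $\{u,v,w,x\}$, $A\cup B$ and $Q\cup S$ and already sum to $10l+1$, and squeeze: every bound is forced to be tight, in particular the weight on $A\cup B$ is exactly $2l$, and your inequality $2k+\max\{0,\,4l-4k\}\geq 2l$ (with equality only at $k=l$, $f(B)=0$, no label-$1$ vertices in $A$, and all of $B$ dominated from $A$) delivers the exact $4$-cover. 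This is arguably cleaner than the paper's argument: it bypasses Lemma~\ref{sameindex} entirely, whose case analysis is the least rigorous part of that section, and concentrates all of the X4C combinatorics in one transparent counting step on $A\cup B$; the price is that you must pass from the given $f$ to another optimum with $f(P)=0$ via Lemma~\ref{nopvertex}, which is legitimate here because the statement only requires extracting a cover from \emph{some} weight-$(10l+1)$ function. One wording nit in your forward direction: it is $q_{i+1}^1$ (with $q_1^1$ closing the cycle), not $q_i^1$ itself, that dominates $q_i^2$, as your own parenthetical already indicates.
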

\begin{proof}
        $[\Rightarrow]$ Let $C' \subseteq C$ be a solution to the X4C problem instance. The Roman dominating function $f$ is given as follows.

    \[f(z) = \begin{cases}
0, & \text{if } z \in \left( \{v,w,x\} \cup \bigcup\limits_{C_j \notin C'}a_j \cup B \cup P \cup \bigcup\limits_{i\in[4l]} \{q_i^2\} \cup R \cup S \right), \\
1, & \text{if } z \in \left(\{ u \}\right), \\
2, & \text{if } z \in \left(\bigcup\limits_{C_j \in C'} \{a_j\} \cup \bigcup\limits_{i\in[4l]} \{q_i^1\} \right)
\end{cases}
\]
The weight of $f$ is $10l+1$ and each vertex with the label 0 has a neighbour with the label 2. Thus, we conclude that $f$ is a Roman dominating function of weight $10l+1$.

    [$\Leftarrow$] Let $f$ be a Roman domination function. From Lemma \ref{sameindex}, we have that either no vertex from $Q$ has the label 2 or $4l$ vertices from $\bigcup\limits_{i \in [4l]} q_i^1$ have the label 2. Let us analyze both cases.

\medskip\noindent
    \textbf{Case 1:} $f(y) = 0$, for each vertex $y \in Q$.

    As $f(y) = 0$ for each vertex $y \in Q$, we have that $f(z) = 2$ for each vertex $z \in P$. We also have that $f(z) = 1$ for each vertex $z \in R$. The vertices of $B$ are covered by their neighbours in $P$. The vertices $x$ and $v$ are covered by the vertices of $Q$. The weight utilized on $B \cup P \cup Q \cup R \cup S$ is $10l$. The vertices $\{u, w\} \cup A$ cannot be covered by a remaining weight of one. Hence, we conclude that this case does not lead to a Roman dominating function of weight $10l+1$.  

    \medskip\noindent
    \textbf{Case 2:} $f(y) = 2$, for each vertex $y \in \bigcup\limits_{i \in [4l]} q_i^1$.

    The vertices $\{v,x\} \cup P \cup Q \cup R \cup S$ are covered. The weight utilized on $Q$ is $8l$ and we are left with $2l+1$. We need to label a subset of vertices in $A$ with 2, if not we need to label all the $4l$ vertices in $B$ with 1, which goes beyond the available weight. As $A$ is a clique, the vertex set $\{w\} \cup A$ will be covered by a neighbour from $A$ with label 2. Vertex $u$ is assigned the label 1. The only way to cover the vertex set $B$ with the remaining weight of $2l$ is to set $l$ vertices from $A$ with label 2. If there exist such $l$ vertices that cover all $4l$ vertices of $B$, then we obtain an RDF of weight $10l+1$. If there does not exist such $l$ vertices, then we cannot compute an RDF of weight $10l+1$. The $l$ vertices of $A$ that cover the vertex set $B$ correspond to the sets that cover all the elements in the X4C problem instance exactly once. 
\end{proof}
From Lemmas \ref{weight10l} and \ref{g1}, we have that in order to compute $\gamma_R(H)$ for $H \in \mathcal{G}$, we must solve the X4C problem instance. From Lemma \ref{x4cproof}, we have that X4C problem is NP-complete. Hence, we obtain the following result.
\begin{theorem}
    RD problem on graph class $\mathcal{G}$ is NP-complete.
\end{theorem}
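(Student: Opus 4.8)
The plan is to establish NP-completeness in the standard two steps: membership in NP, and NP-hardness by a polynomial-time reduction from X4C, with the heavy lifting supplied by the structural lemmas already in place. The reduction is precisely the construction of $H \in \mathcal{G}$ from an X4C instance, so the theorem reduces to assembling Lemmas~\ref{weight10l}, \ref{g1} and \ref{x4cproof}.

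First I would argue membership in NP. Given a candidate labeling $f : V(H) \to \{0,1,2\}$ together with the target $k$, one verifies in polynomial time that $\sum_u f(u) \le k$ and that every vertex with label $0$ has a neighbour with label $2$; hence RD on $\mathcal{G}$ lies in NP. Next, for hardness, I take an arbitrary X4C instance $(X,C)$ with $|X| = 4l$ and build $H \in \mathcal{G}$ by the construction of this subsection. This creates $O(l+m)$ vertices and $O(lm)$ edges, so it is computable in polynomial time. I set the threshold $k = 10l+1$.

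The correctness of the reduction follows from the lemmas. By Lemma~\ref{weight10l}, every Roman dominating function on $H$ has weight at least $10l+1$, so a Roman dominating function of weight at most $10l+1$ exists if and only if $\gamma_R(H) = 10l+1$. By Lemma~\ref{g1}, this equality holds if and only if $(X,C)$ is a YES-instance of X4C. Therefore $(X,C)$ is a YES-instance of X4C if and only if $(H, 10l+1)$ is a YES-instance of RD. Since X4C is NP-complete by Lemma~\ref{x4cproof}, NP-hardness of RD on $\mathcal{G}$ follows, and combined with membership in NP we conclude that RD on $\mathcal{G}$ is NP-complete.

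The substance of the argument lies not in this assembly but in the underlying structural analysis, and that is where the main obstacle would sit in a from-scratch proof. The lower bound of Lemma~\ref{weight10l} rests on the gadget-wise bounds of Lemmas~\ref{weight1}, \ref{weight8l} and \ref{weight2b}, while the normalization results (Lemmas~\ref{sameindex}, \ref{onlyone} and \ref{nopvertex}) force a minimum-weight solution into a canonical form. The delicate point is the tightness asserted in Lemma~\ref{g1}: weight exactly $10l+1$ is attainable precisely when $l$ clique vertices of $A$ dominate all $4l$ vertices of $B$ without overlap, which is exactly the exact-cover condition. The hardest part is thus verifying that no cheaper labeling can circumvent the domination requirement on $B$ — that routing coverage through $B \cup P$, or through partial labelings of $Q$, never undercuts the cost $2l$ of selecting $l$ sets — which is handled by the case analysis in Lemmas~\ref{weight2b} and~\ref{sameindex}.
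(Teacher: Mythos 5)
Your proposal is correct and follows essentially the same route as the paper: the paper also derives the theorem directly from Lemmas~\ref{weight10l} and~\ref{g1} (which together show that deciding whether $\gamma_R(H)=10l+1$ is equivalent to solving the underlying X4C instance) combined with the NP-completeness of X4C from Lemma~\ref{x4cproof}. Your additional explicit remarks on NP membership and the polynomial size of the construction are routine and consistent with what the paper leaves implicit.
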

Now, we analyse the complexity of GRD problem on graph class $\mathcal{G}$.
\begin{lemma}~\label{twovertices}
     Let $f$ be a minimum weighted global Roman dominating function. For each $y \in B \cup R \cup S$, $f(y) \neq 2$.
\end{lemma}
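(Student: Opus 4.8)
The plan is to prove the statement by contradiction via a local exchange argument: assuming a minimum weighted global Roman dominating function $f$ with $f(y)=2$ for some $y\in B\cup R\cup S$, I construct another global Roman dominating function of strictly smaller weight, contradicting minimality. I would treat the three memberships $y\in S$, $y\in R$, $y\in B$ separately, and in each case move (or delete) the label $2$ so that it lands on a higher-degree neighbour that $G$-dominates strictly more vertices at the same cost.

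Before the cases, I would isolate the single fact that keeps the complement side cheap. Every vertex of $B\cup R\cup S$ has a tiny closed $G$-neighbourhood, namely $N_G[s_i^k]=\{s_i^k,p_i,q_i^1,q_i^2\}$, $N_G[r_i]=\{r_i,q_i^1,q_i^2\}$, and $N_G[b_i]=\{b_i,p_i\}\cup\{a_j:x_i\in C_j\}$, so as a label-$2$ vertex it $G$-dominates only those few vertices, whereas in $\overline{G}$ any label-$2$ vertex dominates all of its (very many) non-neighbours. Since $f$ is in particular a Roman dominating function, Lemmas \ref{weight8l} and \ref{weight2b} force weight $8l$ on $P\cup Q\cup S$ and $2l$ on $A\cup B\cup P$, so a minimum weighted $f$ contains label-$2$ vertices at many distinct indices. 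I would record this as a claim: for every label-$0$ vertex $z$ there is a label-$2$ vertex outside $N_G[z]$, and this supply survives the deletion or relocation of a single label-$2$ vertex of $B\cup R\cup S$. This reduces each case to checking the $G$-side (ordinary Roman domination) only.

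For $y=s_i^k\in S$ I would use that $s_i^1,s_i^2,s_i^3$ are pairwise false twins with common neighbourhood $\{p_i,q_i^1,q_i^2\}$ and that $s_i^k$ does \emph{not} $G$-dominate the other two $s$-vertices. If some vertex of $\{p_i,q_i^1,q_i^2\}$ already has label $2$, then $s_i^k=2$ is redundant on the $G$-side and I set it to $0$, saving weight $2$. Otherwise each of the other two $s$-vertices must carry label at least $1$, so $\{p_i,q_i^1,q_i^2,s_i^1,s_i^2,s_i^3\}$ carries weight at least $4$; I then set all three $s$-vertices to $0$, set $q_i^1$ to $2$ (which $G$-dominates $p_i$ and all three $s$-vertices), and give $q_i^2$ label $1$, lowering the gadget weight to at most $3$. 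For $y=r_i\in R$, where $N_G(r_i)=\{q_i^1,q_i^2\}$, I relocate the $2$ onto $q_i^1$: it then $G$-dominates $r_i$, $p_i$ and the private $s$-vertices, and the only coverage that was supplied solely by $r_i$ is that of $q_i^2$, which I restore either through its $Q$-cycle neighbour $q_{i+1}^1$ or, failing that, by spending one unit of the freed weight to label $q_i^2$. For $y=b_i\in B$, a label $2$ on $b_i$ is $G$-dominated by a label $2$ on $p_i$ or on any incident $a_j$; using that $A$ is a clique dominated by $v,w,x$, I would argue via the weight bounds that $b_i$'s neighbours are not solely dependent on $b_i$, so $f(b_i)$ may be lowered to $0$ (if $b_i$ has a label-$2$ $G$-neighbour) or to $1$ (otherwise), in either case strictly decreasing the weight.

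The main obstacle I anticipate is the cyclic coupling on $Q$ induced by the edges $q_i^2q_{i+1}^1$ together with the wrap-around edge $q_{4l}^2q_1^1$: when I zero out $s$- or $r$-vertices I must ensure that the companion $q_i^2$ does not become uncovered on the $G$-side, and this bookkeeping is what makes Cases $S$ and $R$ delicate. I would handle it uniformly by always routing the freed weight (a surplus of at least $1$, and of $2$ in the redundant subcases) so that both $q_i^1$ and $q_i^2$ of the affected index remain covered, labelling the leftover one with $1$ exactly when the $Q$-cycle does not already dominate it. The second, more routine obstacle is guaranteeing the $\overline{G}$-condition throughout, which the counting claim above settles once and for all, since each removed vertex of $B\cup R\cup S$ is a non-neighbour of almost every vertex and the minimum function always retains label-$2$ vertices (for instance in $A$) lying outside the small neighbourhoods involved.
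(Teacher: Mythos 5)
Your proposal is correct and follows essentially the same route as the paper: a case analysis over $S$, $R$ and $B$ showing that a label $2$ on such a vertex is redundant on the $G$-side and can be deleted or relocated onto $\{p_i,q_i^1,q_i^2\}$ with a net weight decrease, while the $\overline{G}$-side survives because label-$2$ vertices persist outside the small neighbourhoods involved. If anything, your explicit subcases (e.g.\ when none of $p_i,q_i^1,q_i^2$ carries label $2$) and your up-front claim about the complement condition are more careful than the paper's rather terse redundancy argument.
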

 \begin{proof}
    Consider a vertex $s_i^1 \in S$. By having the label 2 for $s_i^1$, the vertices $s_i^2$ and $s_i^3$ are not covered in $G$. All the vertices $s_i^1$ covers in $G'$ are covered by some other vertices with the label 2. Therefore, the weight of $f$ increases by exactly two.
    
    Consider a vertex $r_i \in R$. Even if we have the label 2 for $r_i$, the label 2 must be assigned to one among $p_i, q_i^1$ and $q_i^2$. All the vertices $r_i$ covers in $G'$ are covered by some other vertices with the label 2. Hence, this leads to an increased weight of at least one.
    
    Consider a vertex $b_i \in B$. Each of its neighbours from $A$ is covered either by a vertex from $\{v,w,x\} $ or by some vertex of $A$. Vertex $p_i$ is also covered either by $p_i$ itself or one among $q_i^1$ and $q_i^2$. All the vertices $b_i$ covers in $G'$ are covered by some other vertices with the label 2. Therefore, assigning the label 2 to $b_i$ increases the weight of $f$ by at least one.
 \end{proof}
\begin{lemma}~\label{weight1grd}
     The minimum weight of a global Roman dominating function $f$ on $\{u,v,w,x\}$ is at least $2$.
\end{lemma}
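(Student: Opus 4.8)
The plan is to argue by contradiction and to exploit the fact that a global Roman dominating function (GRDF) is, in particular, an ordinary Roman dominating function; hence Lemma~\ref{weight1} already gives weight at least $1$ on $\{u,v,w,x\}$, and the entire task is to exclude the value exactly $1$. First I would record the relevant adjacencies in $G$: the vertex $u$ is adjacent only to $v,w,x$, both $v$ and $x$ are adjacent to $\{u\} \cup A \cup P \cup Q$, and $w$ is adjacent only to $\{u\} \cup A$. In particular, the non-neighbors of $v$ in $G$ are exactly $\{w,x\} \cup B \cup R \cup S$, so these are the only vertices that can cover $v$ in $\overline{G}$.

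Assume, for contradiction, that a minimum weighted GRDF $f$ has weight exactly $1$ on $\{u,v,w,x\}$; then one of these vertices is labeled $1$ and the other three are labeled $0$. I would first rule out $f(u)=0$: since $N_G(u)=\{v,w,x\}$, covering $u$ in $G$ would force some vertex of $\{v,w,x\}$ to carry label $2$, contributing weight $2$ to $\{u,v,w,x\}$ and contradicting the assumed weight of $1$. Thus the only remaining possibility is $f(u)=1$ and $f(v)=f(w)=f(x)=0$.

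I would then invoke the global requirement on $v$. Because $f(v)=0$, the vertex $v$ must have a non-neighbor in $G$ carrying label $2$. Its non-neighbors are $\{w,x\} \cup B \cup R \cup S$; but $f(w)=f(x)=0$, and Lemma~\ref{twovertices} (valid since $f$ is a minimum weighted GRDF) guarantees that no vertex of $B \cup R \cup S$ has label $2$. Hence $v$ has no label-$2$ non-neighbor and is not covered in $\overline{G}$, contradicting that $f$ is a GRDF. This eliminates the last configuration, so the weight of $f$ on $\{u,v,w,x\}$ is at least $2$.

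The crux of the argument --- and the only place where the global hypothesis genuinely improves the bound from $1$ to $2$ --- is this final step: it is precisely the $\overline{G}$-covering constraint on $v$, together with Lemma~\ref{twovertices}, that forbids the otherwise-legal labeling $f(u)=1$. The one piece of bookkeeping I would be careful about is the asymmetry among $v,w,x$: only $v$ and $x$ are adjacent to $P \cup Q$, while $w$ is not, so their non-neighbor sets differ; running the same argument on $x$ works identically, and a single failing vertex among $\{v,w,x\}$ suffices to close the case.
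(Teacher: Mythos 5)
Your proposal is correct and rests on the same key idea as the paper's proof: Lemma~\ref{twovertices} eliminates $B \cup R \cup S$ as possible label-$2$ non-neighbours of $v$ (and $x$), so the $\overline{G}$-covering requirement forces a label $2$ somewhere inside $\{u,v,w,x\}$. The paper phrases this directly (one of $v,x$ must receive label $2$) while you argue by contradiction against weight exactly $1$, but the substance is the same.
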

\begin{proof}
    From Lemma \ref{twovertices}, we have that the vertices $v$ and $x$ do not have a non-neighbour with the label 2. Therefore, one among these two vertices must be assigned the label 2 so that the other vertex can have a non-neighbour with the label 2. The vertex that is assigned the label 2 among $v$ and $x$ will also cover $u$. 
\end{proof}
\begin{lemma}~\label{weight10l2}
    The weight of a global Roman dominating function $f$ on $H \in \mathcal{G}$ is at least $10l+2$.
\end{lemma}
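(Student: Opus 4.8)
The goal is to show that every global Roman dominating function (GRDF) on $H \in \mathcal{G}$ has weight at least $10l+2$, exactly one more than the RDF lower bound of Lemma \ref{weight10l}. The natural plan is to mirror the proof of Lemma \ref{weight10l}: partition $V(H)$ into the disjoint sets $\{u,v,w,x\}$, $Q \cup S$, and $A \cup B$ (discarding $P$ and $R$), bound the weight on each part from below, and sum. Since a GRDF is in particular an RDF, Lemmas \ref{weight8l} and \ref{weight2b} still give weight at least $8l$ on $Q \cup S$ and at least $2l$ on $A \cup B$ once the $p_i$ are zero; the only change from the RDF argument is to replace the bound of Lemma \ref{weight1} (weight $\geq 1$ on $\{u,v,w,x\}$) by the stronger GRDF bound of Lemma \ref{weight1grd} (weight $\geq 2$). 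This would immediately yield $2 + 8l + 2l = 10l+2$.

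The obstacle is the assumption $f(p_i)=0$. In the RDF proof this was supplied by Lemma \ref{nopvertex}, but that lemma only guarantees that \emph{some} minimum weighted RDF has all $p_i=0$, and a minimum weighted GRDF need not be a minimum weighted RDF (indeed, we are trying to prove its weight strictly exceeds the RDF optimum). To avoid re-establishing an analogue of Lemma \ref{nopvertex} for GRDFs, I would instead argue by contradiction. A GRDF is an RDF, so by Lemma \ref{weight10l} its weight is at least $10l+1$; suppose for contradiction that some GRDF $f$ has weight exactly $10l+1$. Then $f$ is a minimum weighted RDF, so Lemmas \ref{sameindex} and \ref{onlyone} apply, and the Case~2 analysis in the proof of Lemma \ref{g1} pins down the structure of $f$ completely: every vertex of label $2$ lies in $A \cup Q$ (the $l$ clique vertices of an exact cover together with one vertex from each pair $\{q_i^1,q_i^2\}$), vertex $u$ has label $1$, and all remaining vertices---in particular $v$, $w$, and $x$---have label $0$.

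The contradiction then comes from the complement (global) condition at $v$. By construction $N_{G'}(v) = A \cup \{u\} \cup P \cup Q$, so the non-neighbours of $v$ in $G'$ are exactly $\{w,x\} \cup B \cup R \cup S$. Every label-$2$ vertex of $f$ lies in $A \cup Q \subseteq N_{G'}(v)$, and $f(w)=f(x)=0$, so $v$ has no non-neighbour of label $2$; since also $f(v)=0$, vertex $v$ is not covered in $\overline{G'}$, contradicting that $f$ is a GRDF. Here Lemma \ref{twovertices} is exactly what guarantees that the remaining non-neighbours $B \cup R \cup S$ carry no label $2$, closing off the only other possibility. Hence no GRDF attains weight $10l+1$, and by integrality of weights every GRDF has weight at least $10l+2$; the same conclusion holds trivially when the underlying X4C instance is a NO-instance, since then Lemma \ref{g1} already forces the RDF optimum---and hence the GRDF optimum---above $10l+1$. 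The main point requiring care is verifying that the structural characterisation of weight-$(10l+1)$ RDFs extracted from Lemma \ref{g1} is exhaustive (covering every choice of exact cover and of $q_i^1$ versus $q_i^2$), so that the failure of the global condition at $v$ is genuinely unavoidable rather than an artefact of one particular optimal RDF.
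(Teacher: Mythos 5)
Your proof is correct, but it takes a genuinely different route from the paper. The paper's proof is a direct sum of component-wise lower bounds: it asserts that Lemmas~\ref{weight8l}, \ref{weight2b}, \ref{sameindex}, \ref{onlyone} and \ref{nopvertex} ``also hold'' for global Roman dominating functions, invokes the GRDF analogue of Lemma~\ref{nopvertex} to set every $f(p_i)=0$, and then adds the bound of Lemma~\ref{weight1grd} ($\geq 2$ on $\{u,v,w,x\}$) to the bounds $8l$ and $2l$ from Lemmas~\ref{weight8l} and~\ref{weight2b}. You instead argue by contradiction: a GRDF is an RDF, so its weight is at least $10l+1$ by Lemma~\ref{weight10l}; if equality held, the backward direction of Lemma~\ref{g1} would force the Case~2 structure (all label-$2$ vertices in $A\cup Q\subseteq N_{G'}(v)$, with $f(v)=f(w)=f(x)=0$), whence $v$ has no non-neighbour of label $2$ and the complement condition fails. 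What your approach buys is that it sidesteps the paper's unproven claim that Lemma~\ref{nopvertex} (existence of a minimum solution with all $p_i=0$) transfers to GRDFs --- a claim that is not automatic, since a minimum GRDF need not be a minimum RDF, and which you correctly identify as the weak point of the direct decomposition. What it costs is a dependence on the exhaustiveness of the structural characterisation of weight-$(10l+1)$ RDFs inside the proof of Lemma~\ref{g1} (in particular that $u$ must carry the label $1$ rather than one of $v,w,x$ carrying a $2$, which follows because the latter would leave only $2l-1$ for $A\cup B$ against the $2l$ required by Lemma~\ref{weight2b}); you flag this caveat explicitly, and the check goes through. Your handling of the NO-instance case via Lemma~\ref{g1} is also correct.
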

\begin{proof}
    It is to be noted that Lemmas \ref{weight8l}, \ref{weight2b}, \ref{sameindex}, \ref{onlyone} and \ref{nopvertex} also hold for global Roman dominating function. From Lemma \ref{nopvertex}, we have that $f(p_i) = 0$ for each $i \in [4l]$. By reconsidering Lemmas \ref{weight8l}, \ref{weight2b} and \ref{weight1grd} with $p_i = 0$ for each $i \in [4l]$, we have that the weight of a global Roman dominating function $f$ on $H \in \mathcal{G}$ is at least $10l+2$. 
\end{proof}
\begin{lemma}~\label{grd-conclude}
    For graph class $\mathcal{G}$, there exists a minimum weighted global Roman dominating function of weight exactly $10l+2$.
\end{lemma}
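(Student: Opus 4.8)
The plan is to prove the matching upper bound by exhibiting one explicit global Roman dominating function of weight $10l+2$; together with the lower bound of Lemma~\ref{weight10l2} this fixes $\gamma_{gR}(H)=10l+2$. The essential requirement is that the function I construct must work for \emph{every} $H\in\mathcal{G}$, independently of whether the underlying X4C instance admits an exact cover --- this is precisely what separates the behaviour of GRD from that of RD on $\mathcal{G}$. The guiding idea is therefore to dominate $B$ through $P$ rather than through $A$: routing the domination of $B$ through $A$ would require an exact cover (as in Lemma~\ref{g1}), whereas labelling the $p_i$'s achieves it unconditionally.

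Concretely, I would set $f(p_i)=2$ for every $i\in[4l]$, $f(r_i)=1$ for every odd $i$, $f(w)=2$, and $f(z)=0$ for every other vertex; its weight is $2\cdot 4l+1\cdot 2l+2=10l+2$, and the definition makes no reference to the X4C instance. To see that $f$ is a Roman dominating function for $H$, note that each $p_i=2$ dominates its neighbours $b_i,q_i^1,q_i^2,s_i^1,s_i^2,s_i^3$; since $v$ and $x$ are adjacent to all of $P$ they are dominated as well, while $f(w)=2$ dominates the whole clique $A$ together with $u$, and the only remaining vertices are the $r_i$ (odd $i$), which carry label $1$. Thus every label-$0$ vertex has a neighbour of label $2$ in $H$, with $B$ dominated through $P$ exactly as intended.

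I would then check the complementary condition, namely that every label-$0$ vertex has a non-neighbour of label $2$. Here $w$ carries most of the load: being adjacent only to $A\cup\{u\}$, it is a non-neighbour of every vertex in $\{v,x\}\cup B\cup Q\cup S$, each of which therefore has the $\overline{H}$-neighbour $w$ with $f(w)=2$. The two remaining label-$0$ types, $u$ and the vertices of $A$, are non-adjacent to $P$, so each of them has some $p_i$ with $f(p_i)=2$ as a non-neighbour. Hence $f$ is also a Roman dominating function for $\overline{H}$, so $f$ is a global Roman dominating function of weight $10l+2$, and by Lemma~\ref{weight10l2} it is optimal.

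The step I expect to require the most care is the handling of the gadget $\{u,v,w,x\}$, where both conditions must hold at once. The subtlety is that $w$ --- and not $v$ or $x$ --- must receive label $2$: since $v$ and $x$ are adjacent to $P$ they are already dominated in $H$, so $w$ is the unique member of $\{v,w,x\}$ that simultaneously needs a dominator in $H$ and, being non-adjacent to $v,x$ and to $B\cup Q\cup S$, supplies the required $\overline{H}$-dominator for all of them. Once $w=2$ is committed, the odd-index vertices $r_i$ unavoidably consume the extra $2l$ weight, which together with the $8l$ spent on $P$ and the $2$ spent on $w$ exhausts the budget $10l+2$ exactly, leaving no surplus with which the weight could encode the existence of an exact cover.
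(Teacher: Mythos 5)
Your proof is correct, and it is essentially the paper's own construction streamlined. The paper proves this lemma by a case split on whether the underlying X4C instance is a YES- or NO-instance, giving a different weight-$(10l+2)$ function in each case; in the NO-instance case it uses exactly your assignment ($w$ and all of $P$ labelled $2$, $R$ covering the leftover vertices), except that the paper lists $R$ under label $2$, which would actually give weight $12l+2$ --- label $1$ on $R$, as you have it, is what makes the arithmetic work, and it suffices since each $r_i$ only needs to be covered, not to cover anything. Your observation that this single function is a valid GRDF \emph{regardless} of the X4C instance renders the paper's case distinction unnecessary for the lemma as stated: one explicit witness plus the lower bound of Lemma~\ref{weight10l2} pins down $\gamma_{gR}(H)=10l+2$. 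Your verification of both domination conditions is sound, in particular the key point that $w$ (non-adjacent to $v$, $x$, $B$, $Q$, $S$) serves as the complement-dominator for all of those vertices while $P$ serves that role for $u$ and $A$.
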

\begin{proof}
    Let X4C problem be a YES-instance:

 The minimum weighted Roman dominating function $f$ is given as follows.
\[f(z) = \begin{cases}
0, & \text{if } z \in \left( \{u, v,x\} \cup \bigcup\limits_{C_j \notin C'}\{a_j\} \cup B \cup P \cup \bigcup\limits_{i\in[4l]} \{q_i^2\} \cup R \cup S \right), \\
1, & \text{if } z \in \emptyset, \\
2, & \text{if } z \in \left(\{w\} \cup \bigcup\limits_{C_j \in C'} \{a_j\} \cup \bigcup\limits_{i\in[4l]} \{q_i^1\} \right)
\end{cases}
\]
The weight of $f$ is $10l+2$. Each vertex with label 0 does have a neighbour and also a non-neighbour with the label 2. Thus, we conclude that $f$ is a global Roman dominating function of weight $10l+2$.

\medskip
\noindent 
Let X4C problem be a NO-instance:

 The minimum weighted Roman dominating function $f$ is given as follows.
    \[f(z) = \begin{cases}
0, & \text{if } z \in \left( \{u, v,x\} \cup A \cup B \cup Q \cup S \right), \\
1, & \text{if } z \in \emptyset, \\
2, & \text{if } z \in \left(\{w\} \cup P \cup R \right)
\end{cases}
\]

The weight of $f$ is $10l+2$. Each vertex with label 0 has a neighbour and also a non-neighbour with label 2. Thus, we conclude that $f$ is a global Roman dominating function of weight $10l+2$.
\end{proof}
From Lemmas \ref{weight10l2} and \ref{grd-conclude}, we have that $\gamma_{gR}(G) = 10l+2$. Hence, we obtain the following theorem.
\begin{theorem}
    GRD problem on graph class $\mathcal{G}$ is linear-time solvable.
\end{theorem}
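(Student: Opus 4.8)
The plan is to pin the optimum $\gamma_{gR}(H)$ to the single value $10l+2$ for every $H\in\mathcal{G}$, and to observe that this value is a function of the size parameter $l$ alone, independent of the particular X4C instance from which $H$ was built. Once that is done, computing $\gamma_{gR}$ no longer requires solving (the NP-complete) X4C: the algorithm merely recovers $l$ from $H$ and returns $10l+2$.

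First I would assemble the two matching bounds already established. Lemma~\ref{weight10l2} gives $\gamma_{gR}(H)\ge 10l+2$ for every $H\in\mathcal{G}$: under the normalisation $f(p_i)=0$ (valid by Lemma~\ref{nopvertex}, which carries over to the global setting) the block-wise estimates of Lemmas~\ref{weight8l} and \ref{weight2b} account for $8l+2l$ on $Q\cup S$ and $A\cup B$, while Lemma~\ref{weight1grd} contributes a further $2$ on the gadget $\{u,v,w,x\}$. Lemma~\ref{grd-conclude} supplies the reverse inequality by exhibiting global Roman dominating functions of weight exactly $10l+2$ in \emph{both} the YES- and the NO-case of the underlying X4C instance. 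Combining the two yields $\gamma_{gR}(H)=10l+2$ unconditionally.

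The conceptual heart, and the step I expect to be the main obstacle, is the upper bound of Lemma~\ref{grd-conclude} in the NO-case: the existence of a weight-$(10l+2)$ global Roman dominating function even when no exact cover of $X$ exists. This is exactly where GRD diverges from RD. For RD, Lemmas~\ref{weight10l} and \ref{g1} show that the optimum drops to $10l+1$ precisely when X4C is a YES-instance, so $\gamma_R(H)$ faithfully encodes the X4C answer and the problem is NP-complete. For GRD the global constraint shifts the gadget cost from $1$ (Lemma~\ref{weight1}) to $2$ (Lemma~\ref{weight1grd}): since Lemma~\ref{twovertices} forbids label $2$ on $B\cup R\cup S$, the vertices $v$ and $x$ have no admissible label-$2$ non-neighbour outside $\{u,v,w,x\}$, so one vertex of the gadget must itself carry label $2$. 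The decisive point is that this one extra unit of budget can always be spent so as to satisfy the complement requirement for every $0$-labelled vertex, without ever appealing to an exact cover; thus the NO-case attains the same $10l+2$ as the YES-case and the optimum becomes insensitive to the X4C answer. In verifying Lemma~\ref{grd-conclude} one checks class by class --- over $A$, $B$, $Q$, $S$ and the gadget --- that each $0$-vertex of the exhibited labellings has both a neighbour and a non-neighbour of label $2$; this is the routine but delicate part.

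Finally I would settle the running time. Since $\gamma_{gR}(H)=10l+2$ is a closed-form expression in $l$, and $l$ is read off in linear time from any $H\in\mathcal{G}$ (for instance as $|B|/4$, where $B$ is the independent part of the underlying split graph), the algorithm simply scans $H$, extracts $l$, and outputs $10l+2$. No search over labellings and, crucially, no solution of the embedded X4C instance is needed, so GRD on $\mathcal{G}$ is linear-time solvable.
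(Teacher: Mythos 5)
Your proposal is correct and follows essentially the same route as the paper: the lower bound of Lemma~\ref{weight10l2} is matched by the explicit weight-$(10l+2)$ constructions of Lemma~\ref{grd-conclude} in both the YES- and NO-cases, so $\gamma_{gR}(H)=10l+2$ is a closed form in $l$ and can be output after a linear-time scan. Your added commentary on why the NO-case still attains $10l+2$ (the forced extra unit on the $\{u,v,w,x\}$ gadget absorbing the complement requirement) is a faithful reading of the paper's Lemmas~\ref{twovertices} and~\ref{weight1grd}, not a deviation from it.
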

\section{Restricted Graph Classes}
In this section, we investigate the complexity of GRD problem and prove that the problem is NP-complete on split graphs, chordal bipartite graphs, planar bipartite graphs with maximum degree five and circle graphs. Furthermore, we present a linear-time algorithm for cographs.
\subsection{Split graphs}
In this subsection, we prove that GRD problem is NP-complete on split graphs following a reduction from \xthc{} (X3C).

 The definition of split graphs is given as follows.
\begin{definition}
    Graph $G = (V, E)$ is a split graph if the vertex set $V$ can be partitioned into two sets $V_1$ and $V_2$ such that the graph induced on $V_1$ is a clique and $V_2$ is an independent set in $G$.
\end{definition}
  The complexity result related to X3C from the literature is given as follows.
\begin{theorem}[\cite{johnson1979computers}]~\label{knownresult1}
    \xthc{} is NP-complete.
\end{theorem}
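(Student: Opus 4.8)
The plan is to discharge the two standard obligations for NP-completeness separately, and I would dispose of membership first since it is immediate. A candidate certificate for a yes-instance is a subcollection $C' \subseteq C$; a verifier checks in polynomial time that the members of $C'$ are pairwise disjoint and that $\bigcup_{S \in C'} S = X$. Because each set has exactly three elements and $|X| = 3q$, a single pass over the selected sets suffices to confirm both conditions (indeed, full coverage by $q$ three-element sets already forces disjointness by counting), so \xthc{} $\in$ NP.

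For hardness I would reduce from \textsc{3-Dimensional Matching} (3DM), a classical NP-complete problem from Karp's original list. Recall that a 3DM instance consists of pairwise disjoint sets $W$, $Y$, $Z$ with $|W| = |Y| = |Z| = q$ and a set $M \subseteq W \times Y \times Z$, and asks for a perfect matching $M' \subseteq M$ of size $q$ covering every element exactly once. Given such an instance, I would set $X := W \cup Y \cup Z$ (so $|X| = 3q$) and $C := \{\, \{w,y,z\} \mid (w,y,z) \in M \,\}$. Since $W$, $Y$, $Z$ are disjoint, every triple in $C$ is a genuine $3$-element subset of $X$ with exactly one coordinate in each part, and a subcollection of $C$ is an exact $3$-cover of $X$ if and only if the corresponding triples form a perfect matching in $M$. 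The transformation runs in linear time and is a one-to-one correspondence between solutions, which yields NP-hardness and completes the argument.

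The main obstacle is that this clean reduction merely relocates the genuine combinatorial difficulty into the NP-completeness of 3DM itself. Proving 3DM hard from scratch requires a reduction from \textsc{3-SAT} built from variable gadgets (whose internal triples force a consistent truth assignment by leaving exactly one of two polarity element-sets uncovered), clause gadgets (coverable precisely when a satisfying literal is selected), and garbage-collection elements to absorb the triples not used by a satisfying assignment. Since the present statement is quoted from the literature (\cite{johnson1979computers}), I would not reprove 3DM; instead I would cite its NP-completeness and rely on the trivial coordinate-relabelling reduction above. If a fully self-contained proof were required, the cleanest alternative would be a direct \textsc{3-SAT}-to-\xthc{} reduction along the same gadget lines, but routing through 3DM keeps the transformation transparent and reduces its correctness proof to an essentially syntactic identity.
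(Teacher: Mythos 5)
Your argument is correct, but note that the paper does not prove this statement at all: it is quoted verbatim from Garey and Johnson's book and used as a black box for the split-graph reduction. Your route --- NP membership via the obvious certificate, then the coordinate-forgetting reduction from \textsc{3-Dimensional Matching} --- is exactly the standard argument in that reference, and your observation that full coverage by $q$ three-element sets of a $3q$-element universe forces disjointness by counting is sound. Deferring the hardness of 3DM to the literature is the right call here; nothing further is needed.
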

 \noindent \textbf{Construction. }Let $(X = \{x_1, x_2, \ldots, x_{3q}\}, C = \{C_1, C_2, \ldots, C_t\})$ be an instance of X3C, we transform it into an instance $I = (G, k)$ of GRD problem as follows. For each set $C_j\in C$, we create a vertex $a_j$ and for each element $x_i \in X$, we create a vertex $b_i$ in $G$. We also introduce a dummy vertex $a_{t+1}$. Let $A = \bigcup\limits_{j \in [t+1]}a_j$ and $B = \bigcup\limits_{i\in [3q]}b_i$. The vertices of set $A$ form a clique. For each vertex in $a_j \in A$ we also add a pendant vertex. The pendant vertex adjacent to $a_j$ is denoted by $p_j$. Let $P = \bigcup\limits_{j\in [t+1]}p_j$. We add an edge between $a_j$ and $b_i$ if and only if $x_i \in C_j$ in the X3C instance. This concludes the construction of $G$. See Figure~\ref{fig:fig3} for an illustration. We set $k$ = $q+t+3$. Graph $G$ is a split graph with $A$ being the clique and $B \cup P$ being an independent set in $G$.

\begin{figure} [t]
    \centering
     \begin{tikzpicture} [thick,scale=0.75, every node/.style={scale=0.9}]

        \filldraw (5, 5) circle (2pt) node[anchor=south]{};
        \filldraw (7, 5) circle (2pt) node[anchor=south]{};
        \filldraw (9, 5) circle (2pt) node[anchor=south]{};
        \filldraw (11, 5) circle (2pt) node[anchor=south]{};
        \filldraw (13, 5) circle (2pt) node[anchor=south]{};

        \filldraw (5, 2.5) circle (2pt) node[anchor=south]{};
        \filldraw (7, 2.5) circle (2pt) node[anchor=south]{};
        \filldraw (9, 2.5) circle (2pt) node[anchor=south]{};
        \filldraw (11, 2.5) circle (2pt) node[anchor=south]{};
        \filldraw (13, 2.5) circle (2pt) node[anchor=south]{};

        \filldraw (5, 8) circle (2pt) node[anchor=south]{};
        \filldraw (7, 8) circle (2pt) node[anchor=south]{};
        \filldraw (9, 8) circle (2pt) node[anchor=south]{};
        \filldraw (11, 8) circle (2pt) node[anchor=south]{};
        \filldraw (3, 8) circle (2pt) node[anchor=south]{};
        \filldraw (13, 8) circle (2pt) node[anchor=south]{};
        
        \draw[thin] (5, 5) -- (3, 8);
        \draw[thin] (5, 5) -- (5, 8);
        \draw[thin] (5, 5) -- (9, 8);
        \draw[thin] (7, 5) -- (5, 8);
        \draw[thin] (7, 5) -- (7, 8);
        \draw[thin] (7, 5) -- (9, 8);        
        \draw[thin] (9, 5) -- (7, 8);
        \draw[thin] (9, 5) -- (11, 8);
        \draw[thin] (9, 5) -- (13, 8);
        \draw[thin] (11, 5) -- (9, 8);
        \draw[thin] (11, 5) -- (11, 8);
        \draw[thin] (11, 5) -- (13, 8);  

        \draw[thin] (5, 5) -- (5, 2.5);
        \draw[thin] (7, 5) -- (7, 2.5);
        \draw[thin] (9, 5) -- (9, 2.5);
        \draw[thin] (11, 5) -- (11, 2.5);
        \draw[thin] (13, 5) -- (13, 2.5);

        \draw[thick, dotted] (2, 7.5) -- (2, 8.5);
        \draw[thick, dotted] (14, 7.5) -- (14, 8.5);
        \draw[thick, dotted] (2, 7.5) -- (14, 7.5);
        \draw[thick, dotted] (2, 8.5) -- (14, 8.5);

        \draw[thick, dotted] (4, 4.5) -- (4, 5.5);
        \draw[thick, dotted] (14, 4.5) -- (14, 5.5);
        \draw[thick, dotted] (4, 4.5) -- (14, 4.5);
        \draw[thick, dotted] (4, 5.5) -- (14, 5.5);

        \draw[thick, dotted] (4, 2) -- (4, 3);
        \draw[thick, dotted] (14, 2) -- (14, 3);
        \draw[thick, dotted] (4, 2) -- (14, 2);
        \draw[thick, dotted] (4, 3) -- (14, 3);

        \filldraw (1, 7.5) circle (0cm) node[anchor=south]{$B$};
        \filldraw (3, 4.5) circle (0cm) node[anchor=south]{$A$};

        \filldraw (2.65, 7.6) circle (0cm) node[anchor=south]{$b_1$};
        \filldraw (4.65, 7.6) circle (0cm) node[anchor=south]{$b_2$};
        \filldraw (6.65, 7.6) circle (0cm) node[anchor=south]{$b_3$};
        \filldraw (9.4, 7.6) circle (0cm) node[anchor=south]{$b_4$};
        \filldraw (11.4, 7.6) circle (0cm) node[anchor=south]{$b_5$};
        \filldraw (13.4, 7.6) circle (0cm) node[anchor=south]{$b_6$};
        
        \filldraw (4.65, 4.6) circle (0cm) node[anchor=south]{$a_1$};
        \filldraw (6.65, 4.6) circle (0cm) node[anchor=south]{$a_2$};
        \filldraw (8.65, 4.6) circle (0cm) node[anchor=south]{$a_3$};
        \filldraw (10.65, 4.6) circle (0cm) node[anchor=south]{$a_4$};
        \filldraw (12.65, 4.6) circle (0cm) node[anchor=south]{$a_5$};

        \filldraw (4.65, 2.1) circle (0cm) node[anchor=south]{$p_1$};
        \filldraw (6.65, 2.1) circle (0cm) node[anchor=south]{$p_2$};
        \filldraw (8.65, 2.1) circle (0cm) node[anchor=south]{$p_3$};
        \filldraw (10.65, 2.1) circle (0cm) node[anchor=south]{$p_4$};
        \filldraw (12.65, 2.1) circle (0cm) node[anchor=south]{$p_5$};
        
        \filldraw (3, 2) circle (0cm) node[anchor=south]{$P$};

        \filldraw (9, 0.5) circle (0cm) node[anchor=south]{$G$};
    \end{tikzpicture}
        \caption{Reduced instance of GRD problem constructed from an X3C instance: $X = \{1,2,3,4,5,6\}$ and $C$ = $\{C_1 =  \{1,2,4\}$, $C_2 =  \{2,3,4\}$, $C_3 =  \{3,5,6\}$ and $C_4 =  \{4,5,6\}\}$. The vertices of $A$ form a clique, which is not shown in the figure.}
    \label{fig:fig3}
\end{figure}
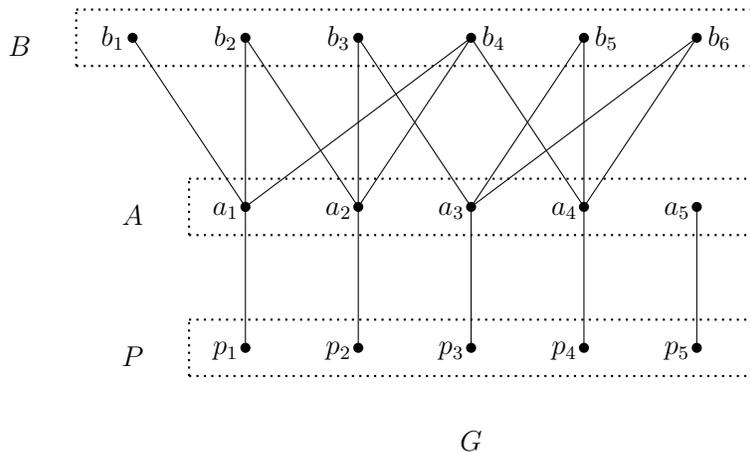
\begin{lemma} ~\label{split1}
    $(X, C)$ is a YES-instance if and only if $I$ has a global Roman dominating function of weight $q+t+3$.
\end{lemma}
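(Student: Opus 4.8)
I would prove the two implications separately. The forward direction is a direct construction plus routine verification, while the backward direction rests on a weight lower bound that exploits the clique/independent-set split of the split graph \emph{together with} the complement (global) condition. The whole difficulty is concentrated in showing that the global condition on the clique $A$ forces exactly two extra units of weight beyond the ordinary Roman bound, and that a weight-$(q+t+3)$ solution can only arise from an exact cover.

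\textbf{Forward direction.} Given an exact cover $C^{*}\subseteq C$ with $|C^{*}|=q$, I would put $f(a_j)=2$ for every $C_j\in C^{*}$, $f(p_{t+1})=2$ on the pendant of the dummy vertex, $f(a_{t+1})=1$, $f(p_j)=1$ for every $j\le t$ with $C_j\notin C^{*}$, and label $0$ everywhere else. The total weight is $2q+(t-q)+1+2=q+t+3$. The verification is mechanical: each $b_i$ sees its covering clique vertex (label $2$) as a $G$-neighbour and the dummy pendant $p_{t+1}$ as a label-$2$ non-neighbour; each label-$0$ clique vertex $a_j$ with $j\le t$ is dominated in $G$ through the clique and in $\overline{G}$ by $p_{t+1}$; and each label-$0$ pendant $p_j$ has its attachment $a_j$ at label $2$ in $G$ and again $p_{t+1}$ as a label-$2$ non-neighbour in $\overline{G}$.

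\textbf{Backward direction, the base bound.} Write $\alpha,\pi,\beta$ for the weights $f$ places on $A$, $P$, $B$, and set $D=\{j: f(a_j)=2\}$. Three observations drive the estimate. First, a pendant $p_j$ has $a_j$ as its unique $G$-neighbour, so whenever $f(a_j)\neq 2$ we must spend $f(p_j)\ge 1$; summing gives $\alpha+\pi\ge 2|D|+\big((t+1)-|D|\big)=|D|+t+1$. Second, a label-$0$ vertex $b_i$ is $G$-dominated only by a clique vertex, so the sets indexed by $D\cap[t]$ must cover every $f$-uncovered element, forcing $3|D\cap[t]|\ge 3q-\beta$ and hence $|D|\ge q-\beta/3$. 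Combining, $f(V)\ge q+t+1+\tfrac{2}{3}\beta$, which already settles every case with $\beta\ge 3$ and, after integer rounding, the case $\beta=2$.

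\textbf{Backward direction, the main obstacle ($\beta\in\{0,1\}$) and the equality case.} This is the only delicate point, and it is exactly where the complement condition on the clique is used. Since $A$ is a clique, every label-$2$ vertex of $A$ is a $G$-neighbour of all other clique vertices, so a label-$0$ vertex of $A$ can be dominated in $\overline{G}$ only by a label-$2$ vertex of $B\cup P$. In the tight regime such label-$0$ clique vertices necessarily exist (otherwise $\alpha\ge t+1$ together with $\pi\ge (t+1)-|D|$ overshoots the budget for all but finitely many degenerate instances, which are checked directly), so $f$ must carry a label-$2$ vertex in $B\cup P$. Because $\beta\le 1$ rules out a label-$2$ element of $B$, this vertex is a pendant $p_{j_0}$; it $\overline{G}$-dominates every clique vertex except $a_{j_0}$, forcing $a_{j_0}$ itself to have a positive label. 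In either branch ($j_0\in D$, or $f(a_{j_0})=1$ with $j_0\notin D$) the weight on $A\cup P$ rises to $|D|+t+3$, giving $f(V)\ge q+t+3$. Finally, tracing the equality case forces $\beta=0$, $D\subseteq[t]$ with $|D|=q$ (the dummy covers no $b_i$, so it cannot belong to a tight $D$), and the $B$-covering inequality tight; hence the $q$ sets indexed by $D$ partition $X$, i.e.\ they form an exact cover. The main work is thus the clique-complement argument that pins down the $+2$; everything else is bookkeeping over the three weight contributions.
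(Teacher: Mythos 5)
Your overall strategy matches the paper's. The forward labeling is identical, and the paper's backward direction rests on the same two observations you isolate: a label-$0$ vertex of the clique $A$ can only be dominated in $\overline{G}$ by a label-$2$ vertex of $B\cup P$, and a label-$2$ pendant $p_{j_0}$ fails to dominate $a_{j_0}$ in $\overline{G}$, which is exactly your ``$+2$'' on $A\cup P$. Your explicit accounting ($\alpha+\pi\ge |D|+t+1$ and $|D|\ge q-\beta/3$) is tidier and more checkable than the paper's informal exchange argument, so up to that point the proposal is fine.

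There is, however, a genuine gap in how you dispose of $\beta\ge 2$. The base bound only shows $f(V)\ge q+t+3$ in that regime; for the backward direction you must also show that a \emph{tight} solution with $\beta\in\{2,3\}$ yields an exact cover, and your closing claim that ``tracing the equality case forces $\beta=0$'' is false as stated. A tight solution with $\beta=2$ concentrated on a single label-$2$ vertex $b_{i_0}$ is consistent with every inequality you wrote ($|D|=q$, $\alpha=2q$, $\pi=t+1-q$, no label-$2$ pendant): here $b_{i_0}$, not a pendant, is the $\overline{G}$-dominator of the label-$0$ clique vertices. You must re-run the complement argument inside this case: each label-$0$ vertex $a_j$ with $j\notin D$ needs $b_{i_0}$ as a non-neighbour, so $x_{i_0}$ lies only in sets indexed by $D$; provided $x_{i_0}$ lies in \emph{some} set --- an assumption on the X3C instance you (and the paper) should make explicit, since otherwise the lemma genuinely fails (give an element occurring in no set the label $2$ and cover the other $3q-1$ elements non-disjointly by $q$ sets) --- it follows that $D$ covers all of $X$ and is again an exact cover. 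The case $\beta=3$ tight must similarly be shown impossible (tightness forbids any label-$2$ vertex in $B\cup P$, contradicting the complement requirement on the label-$0$ clique vertices). These steps are closable with the tools you already have, but they are not ``settled'' by the base bound alone; the same caveat applies to your parenthetical that label-$0$ clique vertices ``necessarily exist'' except in ``finitely many degenerate instances,'' which actually fails for the whole family $t\le q+1$ and should instead be handled by assuming $t\ge q+2$ without loss of generality.
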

\begin{proof}
    ($\Rightarrow$) Let $C'\subseteq C$ be a solution to X3C instance. We define a global Roman dominating function $f$ as follows.
    \[
f(z) = 
\begin{cases}
0, & \text{if } z \in \left(\bigcup\limits_{C_j \notin C'} \{a_j\} \cup \bigcup\limits_{C_j \in C'} \{p_j\} \cup B\right), \\
1, & \text{if } z \in \left(\{a_{t+1}\} \cup \bigcup\limits_{C_j \notin C'} \{p_j\}\right), \\
2, & \text{if } z \in \left(\{p_{t+1}\} \cup \bigcup\limits_{C_j \in C'} \{a_j\}\right)
\end{cases}
\]
Each vertex $u \in B$ has a neighbour in $A$ with the label 2 and also has a non-neighbour $p_{t+1}$ with the label 2. Each vertex $u \in A$ with $f(u)= 0 $ has a neighbour from $A$ with the label 2 and a non-neighbour $p_{t+1}$ with the label 2. Consider a vertex $u \in P$ with $f(u) = 0$. The only neighbour of $u$ in $A$ has the label 2 and $u$ has a non-neighbour $p_{t+1}$ with the label 2. Hence, we conclude that $f$ is a global Roman dominating function for $G$ of weight $q+t+3$. \vspace{2mm} \\
    ($\Leftarrow$) Let $f$ be a global Roman domination function for $G$ of weight $q+t+3$. For each pendant vertex $p_j \in P$, we have that either $f(p_j) = 1$ or $f(a_j) = 2$. Similarly, for each vertex $b_i \in B$, either $f(b_i) = 1$ or $f(a_j) = 2$ for some vertex $a_j \in N(b_i)$. Hence, the weight of $A \cup B$ is at least $2q$ and the weight of $A \cup P$ is at least $t+1$. 
    
    Now, we argue that, $f$ is minimum weighted if and only if each vertex from $B$ has the label 0 and exactly $q$ vertices from $A$ are assigned the label 2. For the sake of contradiction, let us assume that there exist exactly three vertices from $B$ with the label 1 and $q-1$ vertices from $A$ with the label 2. Among the remaining $t-q+2$ uncovered pendant vertices of $P$, $t-q+1$ are assigned the label 1 and one pendant vertex (say $p_j$) is assigned the label 2. Vertex $p_j$ is labeled 2 to serve as a non-neighbour with the label 2 for all the vertices with the label 0, except for $a_j$. As $p_j$ has the label 2, $a_j$ must be labeled 1. The weight of $f$ becomes $q+t+5$, which is higher than the value of $k$. It is easy to see that if we label more than three vertices of $B$ with 1, we would still need a total weight of more than $q+t+5$. Hence, we must label each vertex of $B$ with 0. Therefore, exactly $q$ vertices from $A$ are assigned the label 2. 
    
    Among the $t-q+1$ uncovered pendant vertices of $P$, $t-q$ are assigned the label 1 and one pendant vertex (say $p_j$) is assigned the label 2. As $p_j$ has the label 2, $a_j$ must be labeled 1. The weight of $f$ becomes $q+t+3$. The vertices from $\{a_1, a_{2}, \ldots, a_t\}$ that are assigned the label 2 will correspond to $C'$ in the X3C instance. 
\end{proof}
As $G$ is a split graph, with the help of Theorem \ref{knownresult1} and Lemma \ref{split1}, we arrive at the following theorem.
\begin{theorem}
    GRD problem on split graphs is NP-complete.
\end{theorem}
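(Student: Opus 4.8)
The plan is to prove NP-completeness in the usual two parts: membership in NP, and NP-hardness via the reduction from \xthc{} (X3C) already constructed above. For membership, I would observe that given a labeling $f: V \to \{0,1,2\}$ together with the target $k$, one can check in polynomial time that $f(V) \le k$ and that every vertex $u$ with $f(u) = 0$ has both a neighbour $v \in N_G(u)$ and a non-neighbour $w \in N_{\overline{G}}(u)$ with $f(v) = f(w) = 2$; hence GRD is in NP.

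For hardness, I would first confirm that the graph $G$ produced by the construction is indeed a split graph: the set $A$ induces a clique while $B \cup P$ is independent, since each $b_i$ is adjacent only to some vertices of $A$ and each $p_j$ is a pendant attached to $a_j \in A$. Thus $(A,\, B \cup P)$ is a valid split partition. The construction clearly runs in time polynomial in $|X| + |C|$, as it creates $t+1$ clique vertices, $t+1$ pendant vertices, $3q$ element vertices, and one edge per membership relation $x_i \in C_j$.

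The correctness of the reduction is precisely the content of Lemma \ref{split1}, which establishes that $(X, C)$ is a YES-instance of X3C if and only if $G$ admits a global Roman dominating function of weight $k = q + t + 3$. Combining the polynomial-time reduction, the equivalence supplied by Lemma \ref{split1}, and the NP-completeness of X3C from Theorem \ref{knownresult1}, I would conclude that GRD is NP-hard on split graphs; together with membership in NP, this yields NP-completeness.

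I expect no genuine obstacle at the level of the theorem itself, since all the difficulty has been absorbed into Lemma \ref{split1}. Its backward direction is the delicate part: one must argue that an optimal global Roman dominating function is forced to assign label $0$ to every vertex of $B$ and label $2$ to exactly $q$ vertices of $A$, thereby encoding an exact cover, while the dummy vertex $a_{t+1}$ and the pendant $p_{t+1}$ supply the mandatory non-neighbour with label $2$ that the "global" requirement imposes. The only points worth double-checking at the theorem level are that the split partition is correctly identified and that the budget $k = q + t + 3$ fixed in the construction matches the weight in Lemma \ref{split1}.
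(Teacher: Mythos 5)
Your proposal matches the paper's argument exactly: the theorem is obtained by combining the NP-completeness of \xthc{} (Theorem~\ref{knownresult1}), the polynomial-time construction of the split graph $G$ with clique $A$ and independent set $B \cup P$, and the correctness equivalence of Lemma~\ref{split1}, with NP membership following from straightforward verification of a labeling. No gaps; your added remarks about where the real difficulty lies (the backward direction of Lemma~\ref{split1}) are accurate but not needed at the theorem level.
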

\subsection{Chordal bipartite graphs, planar bipartite graphs with maximum degree five and circle graphs}

We prove that GRD problem is NP-complete on chordal bipartite graphs, planar bipartite graphs with maximum degree five and circle graphs following a reduction from DS problem.

 The definition of chordal bipartite graphs and circle graphs is given as follows.
\begin{definition}
    A bipartite graph is chordal bipartite if each cycle of length at least six has a chord.
\end{definition}
 \begin{definition}
    A graph is a circle graph if and only if there exists a corresponding circle model in which the chords represent the vertices of the graph and two chords intersect only if the corresponding vertices are adjacent.
\end{definition}

The known complexity result related to DS problem is given as follows.
\begin{theorem}[\cite{muller1987np,zvervich1995induced,keil1993complexity}]~\label{knownresult2}
    DS problem on chordal bipartite graphs, planar bipartite graphs with maximum degree three and circle graphs is NP-complete. 
\end{theorem}
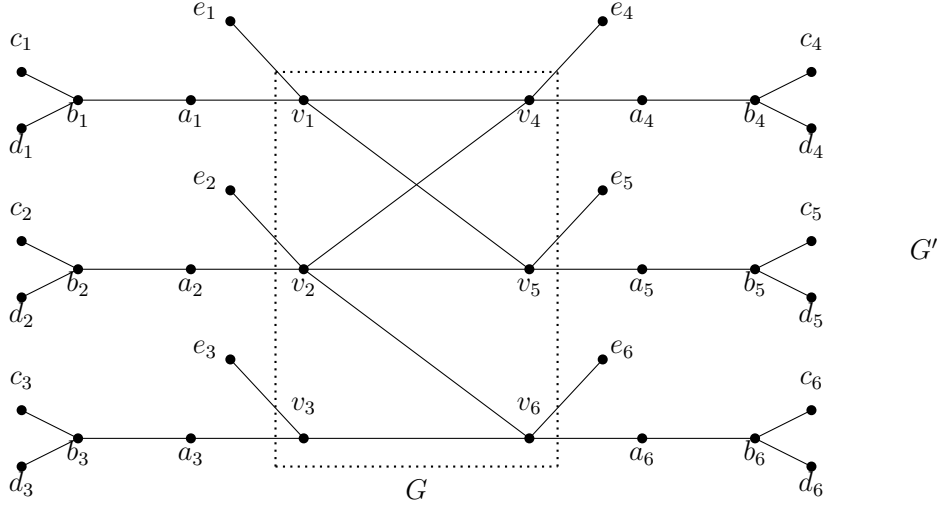
\begin{figure}
    \centering
     \begin{tikzpicture} [thick,scale=0.75, every node/.style={scale=0.9}]

        \filldraw (5, 2) circle (2pt) node[anchor=south]{};
        \filldraw (5, 5) circle (2pt) node[anchor=south]{};
        \filldraw (5, 8) circle (2pt) node[anchor=south]{};
        \filldraw (9, 2) circle (2pt) node[anchor=south]{};
        \filldraw (9, 5) circle (2pt) node[anchor=south]{};
        \filldraw (9, 8) circle (2pt) node[anchor=south]{};

        \filldraw (3.7, 3.4) circle (2pt) node[anchor=south]{};
        \filldraw (3.7, 6.4) circle (2pt) node[anchor=south]{};
        \filldraw (3.7, 9.4) circle (2pt) node[anchor=south]{};
        \filldraw (10.3, 3.4) circle (2pt) node[anchor=south]{};
        \filldraw (10.3, 6.4) circle (2pt) node[anchor=south]{};
        \filldraw (10.3, 9.4) circle (2pt) node[anchor=south]{};

        \draw[thin] (5, 2) -- (3.7, 3.4);
        \draw[thin] (5, 5) -- (3.7, 6.4);
        \draw[thin] (5, 8) -- (3.7, 9.4);
        \draw[thin] (9, 2) -- (10.3, 3.4);
        \draw[thin] (9, 5) -- (10.3, 6.4);
        \draw[thin] (9, 8) -- (10.3, 9.4);  

        \draw[thick, dotted] (4.5, 1.5) -- (9.5, 1.5);
        \draw[thick, dotted] (4.5, 8.5) -- (9.5, 8.5);
        \draw[thick, dotted] (4.5, 1.5) -- (4.5, 8.5);
        \draw[thick, dotted] (9.5, 1.5) -- (9.5, 8.5);

        \draw[thin] (5, 2) -- (9, 2);
        \draw[thin] (5, 5) -- (9, 5);
        \draw[thin] (5, 8) -- (9, 8);
        \draw[thin] (5, 8) -- (9, 5);
        \draw[thin] (5, 5) -- (9, 2);
        \draw[thin] (5, 5) -- (9, 8);   
        
        \draw[thin] (5, 2) -- (3, 2);
        \draw[thin] (5, 5) -- (3, 5);
        \draw[thin] (5, 8) -- (3, 8);
        \draw[thin] (3, 2) -- (1, 2);
        \draw[thin] (3, 5) -- (1, 5);
        \draw[thin] (3, 8) -- (1, 8);        
        \draw[thin] (9, 2) -- (11, 2);
        \draw[thin] (9, 5) -- (11, 5);
        \draw[thin] (9, 8) -- (11, 8);
        \draw[thin] (11, 2) -- (13, 2);
        \draw[thin] (11, 5) -- (13, 5);
        \draw[thin] (11, 8) -- (13, 8);

        \draw[thin] (0, 1.5) -- (1, 2);
        \draw[thin] (0, 2.5) -- (1, 2);
        \draw[thin] (0, 4.5) -- (1, 5);
        \draw[thin] (0, 5.5) -- (1, 5);
        \draw[thin] (0, 7.5) -- (1, 8);
        \draw[thin] (0, 8.5) -- (1, 8);
        \draw[thin] (14, 7.5) -- (13, 8);
        \draw[thin] (14, 8.5) -- (13, 8);
        \draw[thin] (14, 4.5) -- (13, 5);
        \draw[thin] (14, 5.5) -- (13, 5);
        \draw[thin] (14, 1.5) -- (13, 2);
        \draw[thin] (14, 2.5) -- (13, 2);

        \filldraw (3, 2) circle (2pt) node[anchor=south]{};
        \filldraw (3, 5) circle (2pt) node[anchor=south]{};
        \filldraw (3, 8) circle (2pt) node[anchor=south]{};
        \filldraw (11, 2) circle (2pt) node[anchor=south]{};
        \filldraw (11, 5) circle (2pt) node[anchor=south]{};
        \filldraw (11, 8) circle (2pt) node[anchor=south]{};        
        \filldraw (1, 2) circle (2pt) node[anchor=south]{};
        \filldraw (1, 5) circle (2pt) node[anchor=south]{};
        \filldraw (1, 8) circle (2pt) node[anchor=south]{};
        \filldraw (13, 2) circle (2pt) node[anchor=south]{};
        \filldraw (13, 5) circle (2pt) node[anchor=south]{};
        \filldraw (13, 8) circle (2pt) node[anchor=south]{};
     
        \filldraw (0, 1.5) circle (2pt) node[anchor=south]{};        
        \filldraw (0, 2.5) circle (2pt) node[anchor=south]{};
        \filldraw (0, 4.5) circle (2pt) node[anchor=south]{};        
        \filldraw (0, 5.5) circle (2pt) node[anchor=south]{};        
        \filldraw (0, 7.5) circle (2pt) node[anchor=south]{};        
        \filldraw (0, 8.5) circle (2pt) node[anchor=south]{};

        \filldraw (14, 1.5) circle (2pt) node[anchor=south]{};        
        \filldraw (14, 2.5) circle (2pt) node[anchor=south]{};
        \filldraw (14, 4.5) circle (2pt) node[anchor=south]{};        
        \filldraw (14, 5.5) circle (2pt) node[anchor=south]{};        
        \filldraw (14, 7.5) circle (2pt) node[anchor=south]{};        
        \filldraw (14, 8.5) circle (2pt) node[anchor=south]{};

        \filldraw (7, 0.75) circle (0cm) node[anchor=south]{$G$};
        \filldraw (16, 5) circle (0cm) node[anchor=south]{$G'$};

        \filldraw (5, 7.35) circle (0cm) node[anchor=south]{$v_1$};
        \filldraw (3, 7.35) circle (0cm) node[anchor=south]{$a_1$};
        \filldraw (1, 7.35) circle (0cm) node[anchor=south]{$b_1$};
        \filldraw (0, 8.7) circle (0cm) node[anchor=south]{$c_1$};
        \filldraw (0, 6.8) circle (0cm) node[anchor=south]{$d_1$};
        \filldraw (3.25, 9.25) circle (0cm) node[anchor=south]{$e_1$}; 

        \filldraw (5, 4.35) circle (0cm) node[anchor=south]{$v_2$};
        \filldraw (3, 4.35) circle (0cm) node[anchor=south]{$a_2$};
        \filldraw (1, 4.35) circle (0cm) node[anchor=south]{$b_2$};
        \filldraw (0, 5.7) circle (0cm) node[anchor=south]{$c_2$};
        \filldraw (0, 3.8) circle (0cm) node[anchor=south]{$d_2$};
        \filldraw (3.25, 6.25) circle (0cm) node[anchor=south]{$e_2$};

        \filldraw (5, 2.25) circle (0cm) node[anchor=south]{$v_3$};
        \filldraw (3, 1.35) circle (0cm) node[anchor=south]{$a_3$};
        \filldraw (1, 1.35) circle (0cm) node[anchor=south]{$b_3$};
        \filldraw (0, 2.7) circle (0cm) node[anchor=south]{$c_3$};
        \filldraw (0, 0.8) circle (0cm) node[anchor=south]{$d_3$};
        \filldraw (3.25, 3.25) circle (0cm) node[anchor=south]{$e_3$};

        \filldraw (9, 7.35) circle (0cm) node[anchor=south]{$v_4$};
        \filldraw (11, 7.35) circle (0cm) node[anchor=south]{$a_4$};
        \filldraw (13, 7.35) circle (0cm) node[anchor=south]{$b_4$};
        \filldraw (14, 8.7) circle (0cm) node[anchor=south]{$c_4$};
        \filldraw (14, 6.8) circle (0cm) node[anchor=south]{$d_4$};
        \filldraw (10.65, 9.25) circle (0cm) node[anchor=south]{$e_4$}; 

        \filldraw (9, 4.35) circle (0cm) node[anchor=south]{$v_5$};
        \filldraw (11, 4.35) circle (0cm) node[anchor=south]{$a_5$};
        \filldraw (13, 4.35) circle (0cm) node[anchor=south]{$b_5$};
        \filldraw (14, 5.7) circle (0cm) node[anchor=south]{$c_5$};
        \filldraw (14, 3.8) circle (0cm) node[anchor=south]{$d_5$};
        \filldraw (10.65, 6.25) circle (0cm) node[anchor=south]{$e_5$};

        \filldraw (9, 2.25) circle (0cm) node[anchor=south]{$v_6$};
        \filldraw (11, 1.35) circle (0cm) node[anchor=south]{$a_6$};
        \filldraw (13, 1.35) circle (0cm) node[anchor=south]{$b_6$};
        \filldraw (14, 2.7) circle (0cm) node[anchor=south]{$c_6$};
        \filldraw (14, 0.8) circle (0cm) node[anchor=south]{$d_6$};
        \filldraw (10.65, 3.25) circle (0cm) node[anchor=south]{$e_6$};
        
    \end{tikzpicture}
        \caption{Reduced instance of GRD problem from DS problem instance.}
    \label{fig:fig4}
\end{figure}

\medskip
 \noindent \textbf{Construction. }Let $I = (G,k)$ be an instance of DS problem. We transform it into an instance $I' = (G', 3n+k)$ of GRD problem as follows. For each vertex $v_i \in V$, we attach a tree rooted at $v_i$ as follows. We create two vertices $a_i$ and $e_i$ and make them adjacent to $v_i$. We introduce a vertex $b_i$ and make it adjacent to $a_i$. We create two pendant vertices $c_i$ and $d_i$ and make them adjacent to $b_i$. This concludes the construction of $G'$. See Figure~\ref{fig:fig4} for more details.
\begin{lemma}~\label{chordal1}
    $G$ has a dominating set of size at most $k$ if and only if $G'$ has a global Roman dominating function of weight at most $3n+k$.
\end{lemma}
\begin{proof}
        ($\Rightarrow$) Let $S$ be a dominating set of size at most $k$. We define a global Roman dominating function $f$ as follows. 
\[
f(z) = 
\begin{cases}
0, & \text{if } z \in \left(\bigcup\limits_{i \in [n]}\{a_i, c_i, d_i\} \cup \bigcup\limits_{v_i \in S} e_i \cup (V\setminus S)\right), \\
1, & \text{if } z \in \left(\bigcup\limits_{v_i \notin S} \{e_i\} \right), \\
2, & \text{if } z \in \left(S \cup \bigcup\limits_{i \in [n]} \{b_i\}\right)
\end{cases}
\]
    Each vertex $u \in V'$ with $f(u) = 0$ has a non-neighbour $v \notin N(v)$ with $f(v) =2$ in $G'$. This is true because, if we consider any two vertices $b_i$ and $b_j$, each vertex in $u \in V'$ is either non-adjacent to $b_i$ or non-adjacent to $b_j$. Hence, we only discuss whether each vertex $u \in V'$ with $f(u) = 0$ has a neighbour $v \in N(u)$ such that $f(v) =2$.

    \begin{itemize}
        \item Each vertex in $\bigcup\limits_{i \in [n]}\{a_i, c_i,d_i\}$ has a neighbour $b_i$ such that $f(b_i) = 2$.
        \item Each vertex in $\bigcup\limits_{v_i \in S} e_i$ has a neighbour $v_i$ such that $f(v_i) = 2$.
        \item Each vertex $u \in V\setminus S$ has a neighbour $v \in N(u)$ such that $f(v) = 2$.
    \end{itemize}
    Hence, we conclude that $f$ is a global Roman dominating function with weight at most $3n+k$. \vspace{2mm} \\
($\Leftarrow$) Let $f$ be a global Roman dominating function of weight at most $3n+k$ and $S$ be a dominating set.
\begin{itemize}
    \item Each vertex in $\bigcup\limits_{i \in [n]} b_i$ is assigned the label two and its three neighbours $a_i$, $c_i$ and $d_i$ are assigned the label 0.
    \item At this point, we are left with a weight of $n+k$. Each vertex in  $\bigcup\limits_{i \in [n]}\{v_i, e_i\}$ is yet to be covered.
    \item Vertex pair $(e_i, v_i)$ must be assigned labels either (1, 0) or (0, 2). For at most $k$ of such pairs, we label $v_i$ with 2 and $e_i$ with 0. For the rest of at most $n-k$ such pairs, we label $v_i$ with 0 and $e_i$ with 1. The weight of $f$ is $2k+1(n-k)$, that is, $n+k$. 
    \item The vertex set $V' \subseteq V$ that is assigned the label two in $f$ will correspond to the dominating set $S$ in $G$ of size at most $k$.
\end{itemize}  
\end{proof}
\begin{lemma}~\label{graphclass-preserv} 
    If $G$ is
    \begin{enumerate}
        \item chordal bipartite then $G'$ is chordal bipartite.
        \item planar bipartite with maximum degree three, then $G'$ is planar bipartite with maximum degree five.
        \item a circle graph, then $G'$ is also a circle graph.
    \end{enumerate}
\end{lemma}
\begin{proof}
    \begin{enumerate} 
    \medskip
    \item \textbf{Chordal bipartite.}  
    In the construction of $G'$, each gadget added for a vertex $v_i \in V$ consists of a tree involving the vertices $a_i, b_i, c_i, d_i$ and $e_i$. As no cycle is formed within the vertices of a gadget, no cycles of length greater than 4 are introduced.
    As $G$ is chordal bipartite, $G'$ remains chordal bipartite.

    \item \textbf{Planar bipartite with maximum degree 5.} 
    Each gadget is a tree with maximum degree 3. The vertex $v_i$ in $G'$ has a degree of 5 after the addition of edges to $a_i$ and $e_i$. The planarity of $G'$ is preserved. As the gadget for any vertex $v_i \in G'$ does not introduce any cycle, the bipartiteness of $G'$ is preserved. Therefore, $G'$ is a planar bipartite graph with maximum degree 5.

    \item \textbf{Circle graph.}  
    In the construction, each gadget for vertex $v_i$ is confined only to $v_i$. The gadget forms a tree rooted at $v_i$, and does not interact with other vertices of $G$ and their respective gadgets.  
    In the circle representation, the gadget can be represented by adding chords corresponding to $a_i, b_i, c_i, d_i, e_i$ near the chord for $v_i$ such that these chords do not intersect with any other chords. Hence, the resulting graph $G'$ is also a circle graph.
\end{enumerate}
\end{proof}
\begin{theorem}~\label{cbg}
    GRD problem is NP-complete on chordal bipartite graphs, planar bipartite graphs with maximum degree five and circle graphs.
\end{theorem}
\subsection{Cographs}
In this subsection, we present a linear-time algorithm for computing the global Roman domination number of a cograph.

The definition of cographs is given as follows.
\begin{definition}
    A cograph is a graph that does not contain any induced path on four vertices.
\end{definition}
If $G$ is a connected cograph then $\gamma_R(G) \leq 4$~\cite{liedloff2008efficient}. However, $\gamma_{gR}(G)$ not necessarily  bounded. For example, the complete graph $K_n$ is a cograph and $\gamma_{gR}(K_n)=n$. 

\begin{lemma}[\cite{corneil1985linear}]~\label{cograph-structure}
    If $G$ is a cograph then one of the following holds.
    \begin{enumerate}
        \item $G$ has at most one vertex.
        \item $G$ is the disjoint union of two cographs $G_1$ and $G_2$, i.e.,
        $G=G_1 \cup G_2$.
        \item $G$ is the join of two cographs $G_1$ and $G_2$, i.e.,
        $G=G_1 \vee G_2$.
    \end{enumerate}
\end{lemma}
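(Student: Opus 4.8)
The plan is to derive this decomposition from the single structural fact that a $P_4$-free graph on at least two vertices is either disconnected or has a disconnected complement, combined with the observation that $P_4$-freeness is hereditary: every induced subgraph of a cograph is again $P_4$-free, since an induced $P_4$ inside an induced subgraph would also be induced in $G$. Granting that fact, the three cases fall out immediately. Item 1 is the degenerate base case. If $G$ is disconnected, I partition its connected components into two nonempty groups, inducing subgraphs $G_1$ and $G_2$ with $G = G_1 \cup G_2$; both are induced subgraphs of $G$, hence cographs, giving item 2. If instead $\overline{G}$ is disconnected, write $\overline{G} = H_1 \cup H_2$; taking complements within $V(G)$ turns disjoint union into join, so $G = G_1 \vee G_2$ with $G_i = \overline{H_i}$, and again each $G_i$ is an induced subgraph of $G$ and therefore a cograph, giving item 3.

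The heart of the argument is thus to prove: if $G$ is a cograph with $|V(G)| \ge 2$, then $G$ or $\overline{G}$ is disconnected. I would argue the contrapositive, assuming both $G$ and $\overline{G}$ are connected and exhibiting an induced $P_4$, contradicting that $G$ is a cograph. First note that connectivity of $\overline{G}$ forces $G$ to be non-complete, since a complete graph on $\ge 2$ vertices has an edgeless, hence disconnected, complement; so $G$ has two non-adjacent vertices, and taking a shortest path in the connected graph $G$ between a non-adjacent pair at distance exactly two yields an induced path $a - b - c$ with $ac \notin E$. The remaining task is to extend this induced $P_3$ to an induced $P_4$.

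For the extension I would instead work with a longest induced path $v_1 v_2 \cdots v_k$ in $G$; since any four consecutive vertices of an induced path themselves induce a $P_4$, it suffices to show $k \ge 4$. If $G$ had no induced $P_3$ at all it would be a disjoint union of cliques, and being connected it would be complete, contradicting the connectivity of $\overline{G}$; hence $k \ge 3$. The genuinely delicate case is $k = 3$, where the longest induced path is exactly $a - b - c$: here I would use the connectivity of both $G$ and $\overline{G}$ to locate a fourth vertex $d$ attached to $\{a,b,c\}$ in precisely the pattern producing an induced $P_4$, classifying every other vertex by its adjacencies to $a$, $b$ and $c$ and showing that ruling out all such $d$ would force either $G$ or $\overline{G}$ to split into non-communicating parts.

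I expect this last step, manufacturing the induced $P_4$ from a maximal induced $P_3$ under the twin assumptions that $G$ and $\overline{G}$ are connected, to be the main obstacle, since it is the point where the two connectivity hypotheses must be used simultaneously rather than in isolation. A clean way to package the whole claim is induction on $|V(G)|$: delete a vertex $v$, and if both $G - v$ and $\overline{G} - v$ remain connected, invoke the inductive hypothesis to obtain an induced $P_4$ inside $G - v$; the cases where deleting every vertex disconnects one side can then be resolved by a short neighbourhood-comparison (module) argument, which is the part I anticipate requiring the most care to state without gaps.
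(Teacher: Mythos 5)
The paper offers no proof of this lemma --- it is imported wholesale from the cited reference, being the classical cotree decomposition whose content is Seinsche's theorem --- so your proposal can only be judged on its own terms, and on those terms it has a genuine gap. Your reduction of the lemma to the single claim \emph{``if $G$ is $P_4$-free and $|V(G)|\ge 2$ then $G$ or $\overline{G}$ is disconnected''} is correct and standard: heredity of $P_4$-freeness gives items 2 and 3 once a disconnection of $G$ or of $\overline{G}$ is in hand, and complementation converts a disjoint-union decomposition of $\overline{G}$ into a join decomposition of $G$. But that claim \emph{is} the lemma, and at precisely that point you stop proving things and start describing what you ``would'' do. In your first route the entire difficulty is concentrated in the case where the longest induced path has length exactly $3$, and you explicitly defer the classification of the fourth vertex $d$; in your second route the ``neighbourhood-comparison (module) argument'' for the degenerate cases is likewise the whole problem. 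Neither sketch identifies the actual configuration that produces the induced $P_4$, so the central implication is asserted rather than established.

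The gap is closable, and the induction you gesture at is the cleanest way, so let me indicate the missing idea concretely. Induct on $n=|V(G)|$; the case $n=2$ is trivial. For $n>2$ pick any vertex $v$ and apply the inductive hypothesis to $G-v$; since $P_4$ is self-complementary you may assume (replacing $G$ by $\overline{G}$ if necessary) that $G-v$ is disconnected, with components $C_1,\dots,C_k$, $k\ge 2$. If $v$ has no neighbour in some $C_i$, then $G$ is disconnected (either $v$ is isolated or $C_i$ is separated from the component of $G$ containing $v$) and you are done. If $v$ is adjacent to every vertex of $G-v$, then $v$ is isolated in $\overline{G}$, so $\overline{G}$ is disconnected and you are done. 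Otherwise $v$ has a neighbour in every component but misses some vertex of some $C_i$; by connectivity of $C_i$ there is an edge $ab$ inside $C_i$ with $va\in E$ and $vb\notin E$, and choosing any neighbour $c$ of $v$ in a different component $C_j$ yields the induced path $b$--$a$--$v$--$c$ (the pairs $bc$, $ac$ are non-edges because $b,a$ and $c$ lie in different components of $G-v$, and $bv$ is a non-edge by choice), contradicting $P_4$-freeness. This three-way case split is the step your proposal is missing; with it supplied, your overall architecture is sound.
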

A cotree $T_G$ of a cograph $G$ is a rooted tree in which each internal vertex is either of
$\cup$ (union) type  or $\vee$ (join) type. The leaves of $T_G$ correspond to the vertices of the cograph $G$.

\begin{theorem}\cite{liedloff2008efficient}
If $G$ is a cograph then $\gamma_R(G)$ can be computed in linear-time.
\end{theorem}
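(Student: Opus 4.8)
The plan is to reconstruct the dynamic programming of \cite{liedloff2008efficient} over the cotree. First I would build the cotree $T_G$ of $G$ in linear time using the algorithm of \cite{corneil1985linear}; each internal node is a $\cup$-node or a $\vee$-node whose associated subcograph is respectively the disjoint union or the join of the subcographs of its children, as guaranteed by Lemma \ref{cograph-structure}. Computing $\gamma_R(G)$ then reduces to a bottom-up pass over $T_G$ in which each node stores a constant amount of information.

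For each node of $T_G$, let $H$ denote the subcograph it induces. I would track two quantities: $s_0(H)$, the minimum weight of a Roman dominating function of $H$ (using only the edges inside $H$) that uses \emph{no} label $2$, and $s_2(H)$, the minimum weight of a Roman dominating function of $H$ that uses \emph{at least one} label $2$. Since a labeling with no label $2$ must give every vertex a label at least $1$, we always have $s_0(H)=|V(H)|$, so only $s_2(H)$ requires real work; at a leaf $v$ we set $s_0(\{v\})=1$ and $s_2(\{v\})=2$.

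For a $\cup$-node $H=G_1\cup G_2$ there are no edges between the two parts, so domination is independent: $s_0(H)=|V(G_1)|+|V(G_2)|$, while $s_2(H)$ is obtained by internally dominating both parts with at least one label $2$ overall, i.e.
\[
s_2(H)=\min\bigl(s_2(G_1)+\min(s_0(G_2),s_2(G_2)),\; s_0(G_1)+s_2(G_2)\bigr).
\]
For a $\vee$-node $H=G_1\vee G_2$ the key observation is that a single vertex labeled $2$ on one side is adjacent to \emph{every} vertex on the other side, hence dominates all label-$0$ vertices there for free. Splitting on which side supplies a label $2$ gives
\[
s_2(H)=\min\bigl(\,4,\; s_2(G_1),\; s_2(G_2)\,\bigr),
\]
where the term $4$ corresponds to placing one $2$ on each side (and $0$ elsewhere), and $s_2(G_i)$ corresponds to dominating $G_i$ internally with a $2$ while the other side receives all zeros; again $s_0(H)=|V(H)|$. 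Finally $\gamma_R(G)=\min(s_0(r),s_2(r))$ at the root $r$, and since each node is processed in $O(1)$ time, the whole procedure runs in linear time; note that for a connected cograph the root is a $\vee$-node, so $s_2(r)\le 4$, recovering the known bound $\gamma_R(G)\le 4$.

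The main obstacle is getting the state design exactly right so that both operations are closed under it and the join case is handled correctly: the only externally relevant feature of a subcograph is whether it contributes a label-$2$ vertex, because across a join that is precisely what can dominate the opposite side. Care is needed in the branch of a join where neither side supplies a $2$, since then no external help is available and every vertex is forced to label $1$ (captured by $s_0$). Verifying that the two states $\{s_0,s_2\}$ suffice — that we never need to remember partial coverage more finely — is the crux of the correctness argument, after which the linear running time is immediate from the linear-time cotree construction together with the $O(1)$ work per node.
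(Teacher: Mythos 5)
The paper does not prove this statement at all --- it is quoted verbatim from \cite{liedloff2008efficient} as a known result, so there is no ``paper proof'' to compare against. Your cotree dynamic program is a correct, self-contained reconstruction: the two states $s_0$ (no label $2$, forcing weight $|V(H)|$) and $s_2$ (at least one label $2$) do suffice, since across a join the only externally usable feature of a side is whether it carries a $2$, and your case analysis at a join node (both sides carry a $2$: weight at least $4$; only one side carries a $2$: that side must be internally Roman-dominated, costing at least $s_2(G_i)$, while the other side can be all zeros) establishes both achievability and the matching lower bound for $s_2(G_1\vee G_2)=\min(4,s_2(G_1),s_2(G_2))$. The union recurrence and the $O(1)$ work per cotree node are likewise correct, and the argument works even when a child of a join node is disconnected. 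The cited paper obtains the result by a different route (the bound $\gamma_R\le 4$ for connected cographs together with a characterization of when the value is $1$, $2$ or $3$), but your DP is, if anything, more systematic and generalizes more readily; it also cleanly recovers the $\gamma_R(G)\le 4$ bound that this paper quotes separately.
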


\begin{theorem}\cite{atapour2015global} \label{th-disconnected}
    If $G$ is a disconnected graph with at least two components of order at
least two, then $\gamma_{gR}(G)=\gamma_{R}(G)$.
\end{theorem}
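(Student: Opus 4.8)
The plan is to establish the two inequalities separately. The inequality $\gamma_{gR}(G) \ge \gamma_R(G)$ is immediate, since every global Roman dominating function is in particular a Roman dominating function for $G$. Thus the work lies in proving $\gamma_{gR}(G) \le \gamma_R(G)$, i.e. in exhibiting a Roman dominating function of $G$ of minimum weight that is \emph{also} a Roman dominating function of $\overline{G}$. The single structural fact I would exploit throughout is that any two vertices lying in distinct components of $G$ are non-adjacent in $G$ and hence adjacent in $\overline{G}$; consequently, if some vertex $w$ with label $2$ lies in a component different from that of a vertex $u$, then $w \in N_{\overline{G}}(u)$ and $u$ is automatically dominated in $\overline{G}$.

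I would fix a minimum-weight Roman dominating function $f$ of $G$ and let $V_2 = \{v : f(v) = 2\}$, then split into cases according to how $V_2$ is distributed among the components. If $V_2 = \emptyset$, then $f$ has no vertex of label $0$ (each such vertex would need a label-$2$ neighbour), so the defining condition for $\overline{G}$ is vacuous and $f$ is already a global Roman dominating function. If $V_2$ meets at least two different components of $G$, then for every vertex $u$ with $f(u)=0$ there is a label-$2$ vertex outside $u$'s component, which by the observation above is an $\overline{G}$-neighbour of $u$; hence $f$ is a Roman dominating function of $\overline{G}$ as well, and we are done.

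The remaining, and essential, case is when $V_2 \neq \emptyset$ but $V_2$ is contained in a single component $G_1$. Here a label-$0$ vertex in a component other than $G_1$ is still safe, but a label-$0$ vertex of $G_1$ may fail to have a label-$2$ $\overline{G}$-neighbour. To repair this at no cost, I would use the hypothesis that $G$ has at least two components of order at least two: since at most one of them is $G_1$, there is a component $G_2 \neq G_1$ of order at least two containing no label-$2$ vertex. Because $G_2$ has no label-$2$ vertex, $f$ cannot assign label $0$ to any vertex of $G_2$ (it would be undominated in $G$), so every vertex of $G_2$ carries label $1$. As $G_2$ is connected with at least two vertices, it has an edge $xy$; I would reassign $f(x) := 2$ and $f(y) := 0$, leaving all other labels unchanged. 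This keeps the total weight fixed and preserves the Roman domination of $G$, since inside $G_2$ the new label-$0$ vertex $y$ is adjacent to $x$ and every other vertex of $G_2$ still has label $1$.

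After this swap, $V_2$ now meets two components, namely $G_1$ and $G_2$, so the second case applies and the modified function is a global Roman dominating function of the same, minimum, weight. Combining the cases yields a minimum-weight Roman dominating function that is also a Roman dominating function of $\overline{G}$, giving $\gamma_{gR}(G) \le \gamma_R(G)$ and therefore equality. I expect the crux to be precisely this last step: recognizing that a component free of label-$2$ vertices must be labelled entirely with $1$'s, and that a weight-preserving local swap along one of its edges plants a label-$2$ there. This is exactly the point at which the assumption of \emph{two} components of order at least two is indispensable, since a single such component cannot both host all the $2$'s and supply an external one.
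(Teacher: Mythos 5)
The paper does not prove this theorem at all: it is imported verbatim from Atapour, Sheikholeslami and Volkmann \cite{atapour2015global} and used as a black box in Lemma~\ref{cograph-union}. So there is no in-paper proof to compare against; judged on its own, your argument is correct and complete. The trivial inequality $\gamma_{gR}(G)\ge\gamma_R(G)$ is right, and your case split on how $V_2=\{v: f(v)=2\}$ distributes over the components covers everything: if $V_2=\emptyset$ then every vertex has label at least $1$ and the condition in $\overline G$ is vacuous; if $V_2$ meets two components, cross-component non-adjacency in $G$ gives every label-$0$ vertex a label-$2$ neighbour in $\overline G$; and in the residual case the second component of order at least two is forced to be labelled entirely with $1$'s (a label-$0$ vertex there would have no label-$2$ neighbour in $G$), so the swap $f(x):=2$, $f(y):=0$ along an edge $xy$ of that component preserves both the weight and Roman domination of $G$ while planting a $2$ in a second component, reducing to the previous case. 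This is exactly where the hypothesis of \emph{two} components of order at least two is used, and your remark that this is the crux is accurate. The argument is self-contained and elementary, which is arguably more informative than the paper's citation; the only cosmetic point is that in the last case you could also note that all label-$0$ vertices of the original $f$ necessarily lie in $G_1$, which makes the final verification immediate.
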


\begin{lemma}\label{cograph-union}
    Let $G_1$ and $G_2$ be two cographs with each of them having a component of at least two vertices and  $G = G_1 \cup G_2$. Then 
    \begin{enumerate}
        \item $\gamma_{gR}(G)=\gamma_{R}(G)=\gamma_{R}(G_1)+\gamma_{R}(G_2)$
        \item $\gamma_{R}(\overline G)=\gamma_{R}(\overline G_1 \vee \overline G_2)$
    \end{enumerate}
\end{lemma}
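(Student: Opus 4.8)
The goal is to prove two equalities about the disjoint union $G = G_1 \cup G_2$ of cographs, each with a component of at least two vertices.

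For part (1), the plan is to invoke Theorem~\ref{th-disconnected} directly. Since $G_1$ has a component of order at least two and $G_2$ has a component of order at least two, the disjoint union $G = G_1 \cup G_2$ is a disconnected graph with at least two components of order at least two. Hence Theorem~\ref{th-disconnected} gives $\gamma_{gR}(G) = \gamma_R(G)$ immediately. It then remains to show $\gamma_R(G) = \gamma_R(G_1) + \gamma_R(G_2)$. The plan is to argue that a Roman dominating function on a disjoint union decomposes into independent Roman dominating functions on each component: since there are no edges between $V(G_1)$ and $V(G_2)$, a vertex labeled $0$ in $G_1$ can only receive its required label-$2$ neighbor from within $G_1$, and similarly for $G_2$. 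Thus the restriction of any RDF on $G$ to $V(G_i)$ is an RDF on $G_i$, giving $\gamma_R(G) \geq \gamma_R(G_1) + \gamma_R(G_2)$; conversely, combining optimal RDFs on $G_1$ and $G_2$ yields an RDF on $G$ of weight $\gamma_R(G_1) + \gamma_R(G_2)$, giving the reverse inequality. I expect this first part to be routine.

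For part (2), the key observation is the complementation identity for disjoint unions: since $G = G_1 \cup G_2$ places no edges between the two parts, in the complement $\overline{G}$ every vertex of $V(G_1)$ becomes adjacent to every vertex of $V(G_2)$. Formally, $\overline{G_1 \cup G_2} = \overline{G_1} \vee \overline{G_2}$, where $\vee$ denotes the join. The plan is to establish this structural identity by a direct edge-counting argument: an edge $uv$ is present in $\overline{G}$ precisely when it is absent in $G$; within $V(G_1)$ this matches edges of $\overline{G_1}$, within $V(G_2)$ it matches edges of $\overline{G_2}$, and every cross pair $u \in V(G_1), v \in V(G_2)$ is a non-edge of $G$ hence an edge of $\overline{G}$, which is exactly the join condition. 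Taking Roman domination numbers on both sides of $\overline{G} = \overline{G_1} \vee \overline{G_2}$ yields $\gamma_R(\overline{G}) = \gamma_R(\overline{G_1} \vee \overline{G_2})$.

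The main obstacle, if any, is simply stating the complementation identity $\overline{G_1 \cup G_2} = \overline{G_1} \vee \overline{G_2}$ carefully enough; it is a standard fact but should be written precisely so that the subsequent sections (which will presumably recurse on the join structure using the cotree decomposition of Lemma~\ref{cograph-structure}) can apply it. No serious difficulty arises in either part; both reduce to elementary structural observations about disjoint unions and their complements combined with the previously established Theorem~\ref{th-disconnected}.
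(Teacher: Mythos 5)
Your proposal is correct and follows essentially the same route as the paper: part (1) via Theorem~\ref{th-disconnected} plus the additivity of $\gamma_R$ over disjoint unions, and part (2) via the identity $\overline{G_1 \cup G_2} = \overline{G_1} \vee \overline{G_2}$. You simply spell out the details that the paper leaves as ``easy to see.''
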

\begin{proof}
    Since $G$ is disconnected and consists of two components, $G_1$ and $G_2$, each having at least two vertices. It follows from Theorem~\ref{th-disconnected} that $\gamma_{gR}(G)=\gamma_{R}(G)$. If $G$ contains two components $G_1$ and $G_2$ such that each of them has a component of at least two vertices then it is easy to see that $\gamma_{R}(G)=\gamma_{R}(G_1)+\gamma_{R}(G_2)$.

    The second part of the lemma follows from the fact that the complement of $G$ is the join of the complements of the individual graphs $G_1$ and $G_2$. 
\end{proof}

If either $G_1$ or $G_2$ (without loss of generality, $G_1$) has no component with at least two vertices, we examine whether $G_2$ contains exactly one component with at least three vertices and its remaining components are isolated vertices. If $G_2$ is of this form, then $\gamma_{gR}(G)=\gamma_{R}(G)+1$, otherwise $\gamma_{gR}(G)=\gamma_{R}(G)$. If neither $G_1$ nor $G_2$ has a component with at least two vertices, then $\gamma_{gR}(G)=\gamma_{R}(G)$.

\begin{lemma}\label{cograph-join}
    Let $G_1$ and $G_2$ be two cographs each with at least two vertices and  $G = G_1 \vee G_2$. Then 
    \begin{enumerate}
        \item $\gamma_{gR}(G)=\gamma_{R}(\overline{G_1} )+\gamma_{R}(\overline{G_2})$
        \item $\gamma_{R}(\overline G)=\gamma_{R}(\overline{G_1} )+\gamma_{R}(\overline{G_2})$
    \end{enumerate}
\end{lemma}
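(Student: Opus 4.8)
The plan is to treat the two parts in reverse order, establishing part~2 first and then leveraging it for part~1. The common starting point is the standard identity that the complement of a join is the disjoint union of the complements, i.e. $\overline{G}=\overline{G_1}\cup\overline{G_2}$: every vertex of $G_1$ is adjacent in $G$ to every vertex of $G_2$, so in $\overline{G}$ there are no edges across the two parts, while inside each part one recovers $\overline{G_1}$ and $\overline{G_2}$. Since a Roman dominating function restricts independently to the connected components of a graph, with no interaction between them, $\gamma_R$ is additive over the components of $\overline{G}$ (the same additivity already used in Lemma~\ref{cograph-union}); hence $\gamma_R(\overline{G})=\gamma_R(\overline{G_1})+\gamma_R(\overline{G_2})$, which is exactly part~2.

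For part~1 the lower bound comes for free: any global Roman dominating function of $G$ is by definition a Roman dominating function of $\overline{G}$, so $\gamma_{gR}(G)\ge \gamma_R(\overline{G})=\gamma_R(\overline{G_1})+\gamma_R(\overline{G_2})$ via part~2. The substance is the matching upper bound. I would take optimal Roman dominating functions $f_1$ on $\overline{G_1}$ and $f_2$ on $\overline{G_2}$ and glue them into a single labelling $f$ of $V(G)=V(G_1)\cup V(G_2)$. Because $\overline{G}$ splits as the disjoint union above, $f$ is automatically a Roman dominating function of $\overline{G}$ of weight $\gamma_R(\overline{G_1})+\gamma_R(\overline{G_2})$, so it only remains to verify that $f$ is also a Roman dominating function of $G$; together these two properties make $f$ a global Roman dominating function of the desired weight.

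The key structural observation driving the $G$-side check is that, in a join, a single vertex of $G_2$ carrying label $2$ is adjacent in $G$ to \emph{every} vertex of $G_1$, and symmetrically. Thus if both $f_1$ and $f_2$ place at least one label $2$, then every label-$0$ vertex of $G$ (which lies in $V(G_1)$ or $V(G_2)$) is dominated across the join by a label-$2$ vertex on the opposite side, and $f$ is immediately a Roman dominating function of $G$. I expect the main obstacle to be the boundary case in which some optimal $f_i$ uses \emph{no} label $2$: this happens precisely when $\overline{G_i}$ has no edges, i.e. when $G_i$ is a complete graph, in which case every vertex of $G_i$ is forced to carry label $1$ and the cross-join trick no longer supplies a label-$2$ neighbour for the label-$0$ vertices on the opposite side. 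Handling this case is where the argument needs the most care: one would have to argue that the optimal complement-labelling on the non-complete factor can be chosen so that it additionally dominates that factor \emph{inside} $G$, and to fall back on the identity $\gamma_{gR}(K_n)=n$ when both factors are complete. Pinning down this reshaping — showing that an optimal Roman dominating function of $\overline{G_i}$ can always be taken compatible with $G$-side domination — is the crux of the whole proof, and I would scrutinize it carefully before trusting the stated equality.
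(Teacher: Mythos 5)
Your part~2 and your lower bound for part~1 are correct and coincide with the paper's argument. For the upper bound in part~1 the paper takes a different, shorter route than your gluing construction: it observes that a global Roman dominating function is by definition symmetric in $G$ and $\overline{G}$, so $\gamma_{gR}(G)=\gamma_{gR}(\overline{G})$, and then applies Lemma~\ref{cograph-union} (which rests on Theorem~\ref{th-disconnected}) to the disjoint union $\overline{G}=\overline{G_1}\cup\overline{G_2}$ to conclude $\gamma_{gR}(\overline{G})=\gamma_{R}(\overline{G})=\gamma_{R}(\overline{G_1})+\gamma_{R}(\overline{G_2})$. Your explicit construction reaches the same conclusion whenever each $\overline{G_i}$ contains an edge: a graph with an edge always admits an optimal Roman dominating function using a label $2$ (replace $1+1$ on an edge by $2+0$), and then every label-$0$ vertex of $G$ sees a label-$2$ vertex across the join. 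That is precisely the regime in which the paper's appeal to Lemma~\ref{cograph-union} is legitimate, since that lemma requires both $\overline{G_1}$ and $\overline{G_2}$ to have a component of order at least two.

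The case you flag as the crux --- some $G_i$ complete, so that $\overline{G_i}$ is edgeless and all its vertices are forced to carry label $1$ --- is a genuine gap, and your instinct not to trust the equality there is vindicated: it cannot be patched, because the statement fails. Take $G_1=K_2$ and $G_2=K_1\cup K_2$, both cographs on at least two vertices. Then $\overline{G}=2K_1\cup P_3$ and $\gamma_{R}(\overline{G_1})+\gamma_{R}(\overline{G_2})=2+2=4$, but the unique weight-$4$ Roman dominating function of $\overline{G}$ assigns $1$ to the two isolated vertices, $2$ to the centre of the $P_3$ and $0$ to its two leaves; in $G$ those two leaves are adjacent only to each other and to the two label-$1$ vertices of $G_1$, so this function is not a Roman dominating function of $G$ and $\gamma_{gR}(G)=5$. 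The lemma (and the paper's proof of it) implicitly needs the extra hypothesis that neither $G_1$ nor $G_2$ is complete, equivalently that $\overline{G}$ has at least two components of order at least two as demanded by Theorem~\ref{th-disconnected}. Under that hypothesis your gluing argument closes cleanly by choosing each $f_i$ to contain a label-$2$ vertex; without it, no reshaping of the optimal labelling of $\overline{G_i}$ can rescue the claimed equality.
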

\begin{proof}
    We know that $\gamma_{gR}(G)=\gamma_{gR}(\overline G)$, where $\overline G$ is the disjoint union of two graphs $\overline{G_1}$ and $\overline{G_2}$. Hence, by  Lemma~\ref{cograph-union}, we obtain $$\gamma_{gR}(G)=\gamma_{gR}(\overline G)=\gamma_{R}(\overline{G})=\gamma_{R}(\overline{G_1} )+\gamma_{R}(\overline{G_2})$$

    Since $\overline{G}$ is disconnected and consists of the two components $\overline{G_1}$ and $\overline{G_2}$, it follows that $\gamma_{R}(\overline G)=\gamma_{R}(\overline{G_1} )+\gamma_{R}(\overline{G_2})$. 
\end{proof}
Note that in Lemmas~\ref{cograph-union} and~\ref{cograph-join}, if at least one of the graphs $G_1$ and $G_2$ consists of exactly one vertex, then $G$ either contains a universal vertex or is disconnected. In such cases, it is straightforward to compute $\gamma_{R}(G)$, $\gamma_{R}(\overline{G})$, $\gamma_{gR}(G)$ and $\gamma_{gR}(\overline{G})$.

From Lemmas~\ref{cograph-union} and~\ref{cograph-join}, we have the following theorem.
\begin{theorem}
If $G$ is a cograph then $\gamma_{gR}(G)$ can be computed in linear-time.
\end{theorem}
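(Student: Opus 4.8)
The plan is to exploit the recursive structure of cographs encoded by the cotree $T_G$, which can be built in linear time~\cite{corneil1985linear}. First I would construct $T_G$ and read off the type of its root, which by Lemma~\ref{cograph-structure} determines whether $G$ is a single vertex, a disjoint union $G_1 \cup G_2$, or a join $G_1 \vee G_2$; the single-vertex case is immediate. When the root has several children of the same type, as in the usual alternating cotree, I would partition them into two groups to obtain a binary decomposition $G = G_1 \cup G_2$ or $G = G_1 \vee G_2$, noting that each group again induces a cograph whose cotree is a subtree of $T_G$.

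The crucial observation is that Lemmas~\ref{cograph-union} and~\ref{cograph-join} express $\gamma_{gR}(G)$ \emph{directly} in terms of ordinary Roman domination numbers of the pieces (or of their complements), not in terms of $\gamma_{gR}$ of the pieces. Consequently no deep recursion on $\gamma_{gR}$ is needed: a single top-level decomposition suffices. Concretely, if the root is a join node then by Lemma~\ref{cograph-join} I would output $\gamma_R(\overline{G_1}) + \gamma_R(\overline{G_2})$; if the root is a union node then by Theorem~\ref{th-disconnected} together with Lemma~\ref{cograph-union} I would output $\gamma_R(G_1) + \gamma_R(G_2)$, applying the $+1$ correction exactly when one side is edgeless and the other has precisely one component of order at least three. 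The degenerate situations in which some $G_i$ is a single vertex correspond to $G$ having a universal vertex or splitting off isolated vertices, and for these the relevant $\gamma_R$, $\gamma_R(\overline{G})$ and $\gamma_{gR}(G)$ are computed by inspection, as already noted after Lemma~\ref{cograph-join}.

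It then remains to bound the running time. Every quantity occurring in the combination formulas is the Roman domination number of a cograph: either a piece $G_i$, or its complement $\overline{G_i}$. For a piece I invoke the linear-time algorithm for $\gamma_R$ on cographs~\cite{liedloff2008efficient}; for a complement I first obtain the cotree of $\overline{G_i}$ from that of $G_i$ by interchanging every union node with a join node, which is linear and again yields a cograph, and then apply the same $\gamma_R$ routine. Since $G_1$ and $G_2$ partition $V(G)$, the sizes of the pieces and of their cotrees sum to $O(n+m)$, so the overall cost is linear.

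The main obstacle I anticipate is organizational rather than mathematical: one must carefully enumerate the boundary cases — an empty side, a single-vertex side, a side lacking any component of order at least two — and verify that the formula used in each case matches the correct statement among Theorem~\ref{th-disconnected}, Lemma~\ref{cograph-union}, Lemma~\ref{cograph-join} and the accompanying remarks, so that correctness holds uniformly. Once this case distinction is pinned down, linearity is automatic, since the computational burden is fully delegated to the known linear-time $\gamma_R$ algorithm applied to the pieces and to their complements.
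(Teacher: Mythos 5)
Your proposal is correct and follows essentially the same route as the paper: decompose $G$ at the root of its cotree, apply Lemma~\ref{cograph-join} (join case) or Theorem~\ref{th-disconnected} and Lemma~\ref{cograph-union} with the stated $+1$ correction (union case), handle the single-vertex/universal-vertex degeneracies by inspection, and delegate all computation to the linear-time $\gamma_R$ algorithm on the pieces and their complements. Your write-up is in fact more explicit than the paper's about the implementation (complementing the cotree by swapping node types, the one-level nature of the recursion), but the underlying argument is identical.
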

\section{Conclusion}
In this paper, we studied the complexity of GRD problem and obtained several interesting results. We provided graph classes to differentiate the complexity of GRD problem from that of RD problem. We proved that GRD problem is NP-complete on split graphs, chordal bipartite graphs, planar bipartite graphs with maximum degree five and circle graphs. On the positive front, we presented a linear-time algorithm for GRD problem on cographs. 
\medskip

\noindent
We conclude the paper with the following open questions on GRD problem. 
\begin{itemize}
    \item What is the complexity of the problem on interval graphs, block graphs and strongly chordal graphs?
    \item Since the problem is known to be W[2]-hard parameterized by weight, does it remain W[2]-hard on split graphs and bipartite graphs?
\end{itemize}



\bibliographystyle{elsarticle-num}
\bibliography{references}

@article{panda,
  title={Hardness results of global roman domination in graphs},
  author={Panda, BS and Goyal, Pooja},
  journal={Discrete Applied Mathematics},
  volume={341},
  pages={337--348},
  year={2023},
  publisher={Elsevier}
}

@article{corneil1985linear,
  title={A linear recognition algorithm for cographs},
  author={Corneil, Derek G. and Perl, Yehoshua and Stewart, Lorna K},
  journal={SIAM Journal on Computing},
  volume={14},
  number={4},
  pages={926--934},
  year={1985},
  publisher={SIAM}
}

@InProceedings{henning,
author="Fernau, Henning",
editor="Wiedermann, Ji{\v{r}}{\'i}
and Tel, Gerard
and Pokorn{\'y}, Jaroslav
and Bielikov{\'a}, M{\'a}ria
and {\v{S}}tuller, J{\'u}lius",
title="Roman Domination: A Parameterized Perspective",
booktitle="SOFSEM 2006: Theory and Practice of Computer Science",
year="2006",
publisher="Springer Berlin Heidelberg",
address="Berlin, Heidelberg",
pages="262--271",
abstract="We analyze Roman Domination from a parameterized perspective. More specifically, we prove that this problem is W[2]-complete for general graphs. However, parameterized algorithms are presented for graphs of bounded treewidth and for planar graphs. Moreover, it is shown that a parametric dual of Roman Domination is in {\$}{\{}{\backslash}mathcal FPT{\}}{\$}.",
isbn="978-3-540-32217-7"
}

@article{liu2013roman,
  title={Roman domination on strongly chordal graphs},
  author={Liu, Chun-Hung and Chang, Gerard J},
  journal={Journal of Combinatorial Optimization},
  volume={26},
  number={3},
  pages={608--619},
  year={2013},
  publisher={Springer}
}

@article{liedloff2008efficient,
  title={Efficient algorithms for Roman domination on some classes of graphs},
  author={Liedloff, Mathieu and Kloks, Ton and Liu, Jiping and Peng, Sheng-Lung},
  journal={Discrete Applied Mathematics},
  volume={156},
  number={18},
  pages={3400--3415},
  year={2008},
  publisher={Elsevier}
}

@inproceedings{ashok2024independent,
  title={(Independent) Roman Domination Parameterized by Distance to Cluster},
  author={Ashok, Pradeesha and Das, Gautam K and Pandey, Arti and Paul, Kaustav and Paul, Subhabrata},
  booktitle={International Conference on Combinatorial Optimization and Applications},
  pages={157--169},
  year={2024},
  organization={Springer}
}

@article{keil1993complexity,
  title={The complexity of domination problems in circle graphs},
  author={Keil, J Mark},
  journal={Discrete Applied Mathematics},
  volume={42},
  number={1},
  pages={51--63},
  year={1993},
  publisher={Elsevier}
}

@inproceedings{mohanapriya2023roman,
  title={Roman k-Domination: Hardness, Approximation and Parameterized Results},
  author={Mohanapriya, A and Renjith, P and Sadagopan, N},
  booktitle={International Conference and Workshops on Algorithms and Computation},
  pages={343--355},
  year={2023},
  organization={Springer}
}

@book{johnson1979computers,
  title={Computers and intractability: A guide to the theory of NP-completeness},
  author={Johnson, David S and Garey, Michael R},
  year={1979},
  publisher={WH Freeman}
}

@article{muller1987np,
  title={The NP-completeness of Steiner tree and dominating set for chordal bipartite graphs},
  author={M{\"u}ller, Haiko and Brandst{\"a}dt, Andreas},
  journal={Theoretical Computer Science},
  volume={53},
  number={2-3},
  pages={257--265},
  year={1987},
  publisher={Elsevier}
}

@article{zvervich1995induced,
  title={An induced subgraph characterization of domination perfect graphs},
  author={Zvervich, Igor E and Zverovich, Vadim E},
  journal={Journal of Graph Theory},
  volume={20},
  number={3},
  pages={375--395},
  year={1995},
  publisher={Wiley Online Library}
}

@article{Kikuno,
author={T. Kikuno and N. Yoshida and Y. Kakuda},
title={The NP-Completeness of the Dominating Set Problem in Cubic Planer Graphs},
journal={IEICE Transactions},
year={1980},
volume = {6},
pages={443-444}
}

@book{WEST,
	author = {West, Douglas B.},
	title = {Introduction to graph theory},
	publisher = {Pearson},
	year = {2015},
	address = {Chennai},
	edition = {2}
}

@article{atapour2015global,
  title={Global Roman domination in trees},
  author={Atapour, M and Sheikholeslami, Seyed Mahmoud and Volkmann, Lutz},
  journal={Graphs and Combinatorics},
  volume={31},
  number={4},
  pages={813--825},
  year={2015},
  publisher={Springer}
}

@article{duan2022independent,
  title={Independent roman domination: The complexity and linear-time algorithm for trees},
  author={Duan, Zhixing and Jiang, Huiqin and Liu, Xinyue and Wu, Pu and Shao, Zehui},
  journal={Symmetry},
  volume={14},
  number={2},
  pages={404},
  year={2022},
  publisher={MDPI}
}

@article{chakradhar2022algorithmic,
  title={Algorithmic aspects of roman $\{$3$\}$-domination in graphs},
  author={Chakradhar, Padamutham and Reddy, Palagiri Venkata Subba},
  journal={RAIRO-Operations Research},
  volume={56},
  number={4},
  pages={2277--2291},
  year={2022},
  publisher={EDP Sciences}
}

@article{chakradhar2022roman2,
  title={Algorithmic aspects of total Roman $\{$2$\}$-domination in graphs},
  author={Chakradhar, P and Reddy, P Venkata Subba},
  journal={Commun. Comb. Optim.},
  volume={7},
  pages={183--192},
  year={2022}
}

@article{chakradhar2023complexity,
  title={Complexity aspects of restrained Roman domination in graphs},
  author={Chakradhar, Padamutham},
  journal={Discrete Mathematics, Algorithms and Applications},
  volume={15},
  number={03},
  pages={2250096},
  year={2023},
  publisher={World Scientific}
}

@article{fernau2023perfect,
  title={Perfect Roman Domination: Aspects of Enumeration and Parameterization},
  author={Mann, Kevin and Fernau, Henning},
  journal={Algorithms},
  volume={17},
  number={12},
  pages={576},
  year={2024},
  publisher={MDPI}
}

@article{stewart1999defend,
  title={Defend the Roman empire!},
  author={Stewart, Ian},
  journal={Scientific American},
  volume={281},
  number={6},
  pages={136--138},
  year={1999},
  publisher={JSTOR}
}
\end{document}